\def\a{\alpha}
\def\l{\lambda}
\def\g{\gamma}
\def\0{\bar{0}}
\def\1{\bar{1}}
\def\e{\epsilon}
\def\d{\delta}
\def\g{\mathfrak{g}}
\def\g{{\mathfrak g}}
\def\l{{\lambda}}
\def\A{\mathcal A}
\newtheorem{lemma}{Lemma}[section]
\newtheorem{theorem}[lemma]{Theorem}
\newtheorem{proposition}[lemma]{Proposition}
\newtheorem{corollary}[lemma]{Corollary}
\title{ The simplicity of Kac modules  for the quantum superalgebra  $U_q(gl(m,n))$}
\author{
Chaowen Zhang\\ Department of
Mathematics,\\ China university
 of Mining and Technology,\\ Xuzhou, 221116, Jiang Su, P. R. China}
\date{ }
\begin{document}
\maketitle

{\it  Mathematics Subject Classification (2000)}: 17B37; 17B50.

\section{Introduction}  Let $\g=\g_{\0}\oplus \g_{\1}$ be the general linear Lie superalgebra over the complex number field $\mathbb C$. The quantum superalgebra $U_q(\g)$ in the present paper was defined by R. Zhang \cite{zh}.  The Kac module $K(M)$ is the $U_q(\g)$-module induced from  a simple $U_q(\g_{\0})$-module $M$. Assume $M$ is a weighted $U_q(\g_{\0})$-module which is generated by a primitive vector of weight $\l$. Then $\l$ is called
typical if $K(M)$ is simple. The typical weights in both generic case and the case where $q$ is a primitive root of unity were first studied  in \cite{zh}. Also in \cite{kw},  a sufficient condition for the typicality is given in generic case.\par One of the main goals of the present paper is to determine the typical weights.  We prove that in the case where $K(M)$ is  weighted, the typical weights are determined by a polynomial. Then we determine the polynomial using the method provided by \cite{z2}. Let us note that our polynomial coincides with one given in \cite{zh}, despite the fact that the order of the product for the  elements $F_{ij}$($(i,j)\in\mathcal I_1$) used in \cite{zh} to define the polynomial  is completely different from ours.\par
The paper is organized as follows. Sec. 3 is the preliminaries. In Sec. 4, we give some identities in $U_q(\g)$. In Sec. 5 we discuss the simplicity of the Kac modules, which is determined by a polynomial. The polynomial is determined in Sec. 6. In Sec. 7, we study the simple modules in the case where $q$ is a $l$th root of unity. We prove that, under certain conditions, the algebras $u_{\eta,\chi}(\g_{\0})$ and $u_{\eta,\chi}$ are Morita equivalent.
\section{Notations}
Throughout the paper we use the following notation.\par
$[1, m+n)\quad\quad             =\{1,2,\cdots, m+n-1\}.$\par
$[1,m+n]\quad\quad =\{1,2,\cdots, m+n\}.$\par
$A^{m+n}\quad\quad \quad$the set of all $m+n$-tuples $z=(z_1\dots z_{m+n})$ with $z_i\in A$ for all $i=1,\cdots, m+n$\par
$\mathcal I_0\quad\quad\quad\quad\quad=\{(i,j)|1\leq i<j\leq m\quad \text{or}\quad m+1\leq i<j\leq m+n\}$\par
$\mathcal I_1\quad\quad\quad\quad\quad=\{(i,j)|1\leq i\leq m<j\leq m+n\}$\par
$\mathcal I\quad\quad\quad\quad\quad\quad=\mathcal I_0\cup\mathcal I_1$\par
$A^B\quad\quad\quad\quad\quad$   the set of all  tuples
$\psi=(\psi_{ij})_{(i,j)\in B}$ with  $\psi_{ij}\in A$, where $B=\mathcal I_0$ or $B=\mathcal I_1$\par
$\mathcal A\quad\quad\quad\quad\quad=\mathbb C[q]$ where $q$ is an indeterminate\par
$h(V)\quad\quad\quad\quad$    the set of all  homogeneous elements  in a $\mathbb Z_2$-graded vector space $V=V_{\0}\oplus V_{\1}$\par
$\bar x\quad\quad\quad\quad\quad\quad$    the parity of the  homogeneous element $x\in V=V_{\0}\oplus V_{\1}$.\par
$U(L)\quad\quad\quad\quad\quad$the universal enveloping superalgebra for the Lie superalgebra $L$.\par

\section{The quantum deformation of $gl(m,n)$}

The general linear Lie superalgebra
 $\g=\g_{\0}\oplus \g_{\1}$ has the standard  basis(\cite{k3}) $e_{ij}$, $1\leq i,j\leq m+n$. We denote $e_{ji}$ with $i<j$ also by $f_{ij}$. Then we get $\g=\g_{-1}\oplus \g_{\0}\oplus\g_{1}$, where $$\g_{1}=\langle e_{ij}|(i,j)\in\mathcal I_1\rangle \quad \g_{-1}=\langle f_{ij}|(i,j)\in\mathcal I_1\rangle .$$  The parity of the basis elements is given
by $$\bar{e}_{ij}=\bar f_{ij}=\begin{cases} \bar 0,&\text{if $(i,j)\in \mathcal I_0$ or $i=j$}\\\1, &\text{if $(i,j)\in \mathcal I_1$.}\end{cases}$$
 Let $H=\langle e_{ii}|1\leq i\leq m+n\rangle.$ Then the set of positive roots of
   $\g$ relative to $H$ is $\Phi^+=\Phi^+_{0}\cup\Phi^+_{1},$ where $$\Phi^+_{0}=\{\e_i-\e_j|(i,j)\in \mathcal I_0\},  \Phi^+_{1}=\{\e_i-\e_j|(i,j)\in \mathcal I_1\}.$$
Let $\Lambda=\mathbb Z\e_1+\cdots +\mathbb Z\e_{m+n}\subseteq H^*$. There is a symmetric bilinear form defined on $\Lambda$ as follows(\cite{z1}):$$(\e_i,\e_j)=\begin{cases}\d_{ij},&\text{if $i<m$}\\-\d_{ij},&\text{if $i>m$.}\end{cases}$$

Let $q$ be an indeterminate  over $\mathbb C$. Then the quantum supergroup $U_q(\g)$(see \cite[p.1237]{zh}) is defined as the $\mathbb C(q)$-superalgebra with the generators $K_j,K^{-1}_j,E_{i,i+1},F_{i,i+1}$,  $i\in [1,m+n)$, and  relations
$$(R1)\quad K_iK_j=K_jK_i, K_iK_i^{-1}=1,$$
$$(R2)\quad K_iE_{j,j+ 1}K_i^{-1}=q_i^{(\delta_{ij}-\delta_{i,j+ 1})}E_{j,j+ 1}, \quad K_iF_{j,j+ 1}K_i^{-1}=q_i^{-(\delta_{ij}-\delta_{i,j+ 1})}F_{j,j+ 1},$$
$$(R3)\quad [E_{i,i+1},F_{j,j+1}]=\delta_{ij}\frac{K_iK^{-1}_{i+1}-K^{-1}_iK_{i+1}}{q_i-q^{-1}_i},$$
$$(R4)\quad E_{m,m+1}^2=F_{m,m+1}^2=0,$$$$(R5)\quad E_{i,i+1}E_{j,j+1}=E_{j,j+1}E_{i,i+1}, \quad F_{i,i+1}F_{j,j+1}=F_{j,j+1}F_{i,i+1}, |i-j|>1,$$
$$(R6)\quad E_{i,i+1}^2E_{j,j+1}-(q+q^{-1})E_{i,i+1}E_{j,j+1}E_{i, i+1}+E_{j,j+1}E_{i,i+1}^2=0\quad (|i-j|=1, i\neq m),$$
$$(R7)\quad F_{i,i+1}^2F_{j,j+1}-(q+q^{-1})F_{i,i+1}F_{j,j+1}F_{i, i+1}+F_{j,j+1}F_{i,i+1}^2=0\quad (|i-j|=1, i\neq m),$$
$$(R8)\quad [E_{m-1,m+2}, E_{m,m+1}]=[F_{m-1,m+2}, F_{m,m+1}]=0,$$ where $$q_i=\begin{cases}q,&\text{if $i\leq m$}\\q^{-1},&\text{if $i>m$.}\end{cases}  $$
Most often, we shall  use $E_{\a_i}$(resp. $F_{\a_i}$; $K_{\a_i}$) to denote $E_{i,i+1}$(resp. $F_{i,i+1}$; $K_iK_{i+1}^{-1}$) for $\a_i=\e_i-\e_{i+1}$.
\par
Remark: (1) For each pair of indices $(i,j)\in\mathcal I$, the notation  $E_{ij}, F_{ij}$ are defined by $$\begin{matrix} E_{ij}=E_{ic}E_{cj}-q_c^{-1}E_{cj}E_{ic}, \\ F_{ij}=-q_cF_{ic}F_{cj}+F_{cj}F_{ic},\end{matrix}\quad i<c<j.$$ The relation (R2) then implies that, for $s\in [1,m+n], (i,j)\in\mathcal I$, $$\begin{aligned} &K_sE_{ij}K_s^{-1}&=&q_s^{\d_{si}-\d_{sj}}E_{ij}\\&K_sF_{ij}K^{-1}_s&=&q_s^{-(\d_{si}-\d_{sj})}F_{ij}.\end{aligned}$$
 (2) The parity of the elements  $E_{ij}, F_{ij}, K_s^{\pm 1}$ is defined by  $\bar{E}_{ij}=\bar F_{ij}=\bar{e}_{ij}\in\mathbb Z_2$, $\bar K_s^{\pm 1}=\0$. \par (3) The bracket product  in  $U_q(\g)$ is defined by  $$[x,y]=xy-(-1)^{\bar x\bar y}yx, x,y\in h(U_q(\g)).$$
 A bijective (even)$\mathbb F$-linear map $f$ from an $\mathbb F$-superalgebra $\mathfrak A$ into itself is called an anti-automorphism(resp. $\mathbb Z_2$-graded anti-automorphism) if $$f(xy)=f(y)f(x)(\text{resp. $f(xy)=(-1)^{\bar x\bar y}f(y)f(x)$})$$ for any $x,y\in h(\mathfrak A)$.\par It is easy to show that
 \begin{lemma}\cite{z1,zh} There is an anti-automorphism $\Omega$ and a $\mathbb Z_2$-graded anti-automorphism $\Psi$ of $U_q(\g)$  such that $$\Omega (E_{\a_i})=F_{\a_i}, \Omega (F_{\a_i})=E_{\a_i}, \Omega (K_j)=K_j^{-1},\Omega (q)=q^{-1}$$ $$\Psi (E_{\a_i})=E_{\a_i}, \Psi (F_{\a_i})=F_{\a_i}, \Psi (K_j)=K_j,\Psi (q)=q^{-1},$$for all $i\in [1,m+n), j\in [1,m+n]$.\end{lemma}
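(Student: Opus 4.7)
The plan is to invoke the universal property of $U_q(\g)$ as the $\mathbb{C}(q)$-superalgebra presented by the generators $K_j^{\pm 1}, E_{\a_i}, F_{\a_i}$ subject to (R1)--(R8). For $\Omega$, I would first extend the prescribed assignment on generators to a $\mathbb{C}$-linear map on the free $\mathbb{Z}_2$-graded algebra $\mathcal{F}$ on those generators by declaring $\Omega$ to reverse products and send $q\mapsto q^{-1}$; for $\Psi$, I would do the same but with the super-sign $(-1)^{\bar x\bar y}$ inserted when reversing, again with $q\mapsto q^{-1}$. To show each descends to $U_q(\g)$, it suffices to verify that the image of each defining relator lies in the two-sided ideal generated by the relators.

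The verification is then a check of (R1)--(R8) in turn. Relations (R1), (R2), and (R5) are straightforward: for $\Omega$ one uses the symmetric relation for $F$ in (R2), while for $\Psi$ the $q\mapsto q^{-1}$ substitution cancels against the sign flips coming from permuting Cartan-type elements. Relation (R3) is the most delicate: for $\Omega$, applying the anti-automorphism to $[E_{i,i+1},F_{j,j+1}]$ produces $[E_{j,j+1},F_{i,i+1}]$, while the right side $\delta_{ij}(K_iK_{i+1}^{-1}-K_i^{-1}K_{i+1})/(q_i-q_i^{-1})$ is invariant under $q\mapsto q^{-1}$ up to the sign in the denominator, which matches the reversal of the numerator. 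For $\Psi$, both sides acquire a global minus sign (on the left from the super-sign cancellation in the graded commutator, on the right from $q_i-q_i^{-1}\mapsto q_i^{-1}-q_i$), so the relation survives. Relation (R4) is preserved since squares of odd elements map to squares of odd elements (with $\Psi$ picking up $(-1)^{\1\cdot\1}=-1$ which multiplies the zero on the right harmlessly). For the Serre relations (R6)--(R8), all elements involved in (R6), (R7) have parity $\0$, so $\Omega$ simply swaps (R6) with (R7) via reversal, and $\Psi$ preserves each of them since the super-sign is trivial and $q+q^{-1}$ is invariant under $q\mapsto q^{-1}$. Relation (R8) involves one odd bracket $[E_{m-1,m+2},E_{m,m+1}]$: one must expand $E_{m-1,m+2}$ using the defining formula from the Remark, track the parities, and observe the resulting identity is symmetric under both $\Omega$ and $\Psi$.

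The main obstacle I expect is bookkeeping in (R3) and (R8): in (R3) one must be careful that $\Omega$ is an ungraded anti-automorphism while the bracket is graded, so the image of $[E,F]$ is not literally $[\Omega F,\Omega E]$ but differs by a parity-dependent sign whose consistency with the Cartan side needs to be checked case by case; in (R8) the nested definition of $E_{m-1,m+2}$ and $F_{m-1,m+2}$ forces one to verify the Serre relation at the level of the expanded generators. Once both maps are shown to descend to $U_q(\g)$, bijectivity is immediate because $\Omega^2$ and $\Psi^2$ each fix the generators and send $q\mapsto q$, hence equal the identity on $U_q(\g)$; in particular $\Omega$ and $\Psi$ are their own inverses.
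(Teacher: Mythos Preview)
Your plan is correct and is exactly the standard approach implicit in the paper: the paper does not give a proof but simply writes ``It is easy to show that'' and cites \cite{z1,zh}, so the intended argument is precisely the relation-by-relation verification on the presentation (R1)--(R8) that you outline, followed by the observation that $\Omega^2=\Psi^2=\mathrm{id}$ on generators. Your bookkeeping on (R3) and (R8) is right; in particular $\Omega([E_{\a_i},F_{\a_j}])=[E_{\a_j},F_{\a_i}]$ holds because $\bar E_{\a_i}\bar F_{\a_j}=\bar E_{\a_j}\bar F_{\a_i}$, and $\Omega(E_{m-1,m+2})=F_{m-1,m+2}$ follows directly from the recursive definition in the Remark, so (R8) is sent to itself.
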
  From the lemma it is easily seen  that  $$\Omega (E_{ij})=F_{ij},  \Psi (E_{ij})=q^zE_{ij}, \Psi (F_{ij})=q^zF_{ij}, z\in\mathbb Z$$  for any $(i,j)\in\mathcal I$.\par

We abbreviate $U_q(\g)$ to $U_q$ in the following.
\section{Some formulas in $U_q$}
In this section we present some formulas in $U_q$, most of which are given in \cite{zh}. To keep the paper self-contained, each formula will be
proved unless an explicit proof can be found elsewhere.\par
For $i\in [1,m+n)\setminus m$, the  automorphism $T_{\a_i}$ of $U_q$ is defined by(see \cite[Appendix A]{zh} and also \cite[1.3]{lu1})
$$ T_{\a_i}(E_{\a_j})=\begin{cases}-F_{\a_i}K_{\a_i}, &\text{if $i=j$}\\E_{\a_j}, &\text{if $|i-j|>1$}\\-E_{\a_i}E_{\a_j}+q_i^{-1}E_{\a_j}E_{\a_i}, &\text{if $|i-j|=1$.}\end{cases}$$
$$T_{\a_i}F_{\a_j}=\begin{cases}-K_{\a_i}^{-1}E_{\a_i},&\text{if $i=j$}\\ F_{\a_j}, &\text{if $|i-j|>1$}\\ -F_{\a_j}F_{\a_i}+q_iF_{\a_i}F_{\a_j}, &\text{if $|i-j|=1$.}\end{cases}$$$$T_{\a_i}K_j=\begin{cases}K_{i+1},&\text{if $j=i$}\\K_i,&\text{if $j=i+1$}\\K_j, &\text{if $j\neq i,i+1$.}\end{cases}$$
 $T_{\a_i}$ is an  even automorphism for $U_q$, that is, $$T_{\a_i}(uv)=T_{\a_i}(u)T_{\a_i}(v),\quad  \text{for all}\quad u,v\in h(U_q).$$ \par
By a straightforward computation (\cite[A3]{zh}), one obtains for each $i\in [1, m+n)\setminus m$ the inverse map $T_{\a_i}^{-1}$:
 $$T_{\a_i}^{-1}E_{\a_j}=\begin{cases}-K_{\a_i}^{-1}F_{\a_i}, &\text{if $i=j$}\\ E_{\a_j}, &\text{if $|i-j|>1$}\\ -E_{\a_j}E_{\a_i}+q^{-1}_iE_{\a_i}E_{\a_j},& \text{if $|i-j|=1$.}\end{cases}$$
 $$T_{\a_i}^{-1}F_{\a_j}=\begin{cases}-E_{\a_i}K_{\a_i}, &\text{if $i=j$}\\F_{\a_j},&\text{if $|i-j|>1$}\\ -F_{\a_i}F_{\a_j}+q_iF_{\a_j}F_{\a_i},&\text{if $|i-j|=1$.}\end{cases}$$
$$T_{\a_i}^{-1}K_j=\begin{cases}K_{i+1},&\text{if $j=i$}\\K_i,&\text{if $j=i+1$}\\K_j, &\text{if $j\neq i,i+1$.}\end{cases}$$

It follows from the definition that
$$\begin{aligned}(b1)\quad  E_{ij}&=(-1)^{j-i-1}T_{\a_i}T_{\a_{i+1}}\cdots T_{\a_{j-1}} E_{j-1,j}\\ &=(-1)^{j-i-1}T^{-1}_{\a_{j-1}}T^{-1}_{\a_{j-2}}\cdots T^{-1}_{\a_{i+1}}E_{i,i+1},\\
(b2)\quad F_{i,j}&=(-1)^{j-i-1}T_{\a_i}T_{\a_{i+1}}\cdots T_{\a_{j-1}}F_{j-1,j}\\&=(-1)^{j-i-1}T^{-1}_{\a_{j-1}}T^{-1}_{\a_{j-2}}\cdots T^{-1}_{\a_{i+1}}F_{i,i+1}.\end{aligned}$$

By the defining relation (3), (4) and the formulas above we get $$\begin{aligned}(1)\quad E_{ij}^2&=F_{ij}^2=0,\quad (i,j)\in \mathcal I_1,\\(2)(\cite{zh})\quad \quad [E_{ij}, F_{ij}]&=\frac{K_iK^{-1}_j-K_i^{-1}K_j}{q_i-q_i^{-1}}, (i,j)\in\mathcal I.\end{aligned}$$
Let $V=V_{\0}\oplus V_{\1}$ be a vector superspace over a field $\mathbb F$. A $\mathbb F$-linear mapping $f: V\longrightarrow V$ is called $\mathbb Z_2$-graded with parity $\bar f=\bar i\in\mathbb Z_2$ if $f(V_{\bar k})\subseteq V_{\bar k+\bar i}$ for any $\bar k\in\mathbb Z_2$. Let $A=A_{\0}\oplus A_{\1}$ be an associative  $\mathbb F$-superalgebra.  A $\mathbb Z_2$-graded $\mathbb F$-linear mapping $\d$ from $A$ into itself is called a derivation if $$\d(xy)=\d (x)y+(-1)^{\bar\d\bar x}x\d(y)\quad \text{ for any}\quad x,y\in h(A).$$ Denote by $\text{Der}_{\mathbb F}A$ the set of  all derivations on $A$. For any $x,y\in h(A)$, we define $[x,y]=xy-(-1)^{\bar x\bar y}yx$.  Clearly we have $$[x,y]=-(-1)^{\bar x\bar y}[y,x].$$ For each $x\in h(A)$, it is easy to see that  $[x,-], [-,x]\in \text{Der}_{\mathbb F}A$.
\begin{lemma} (\cite{zh}) The following identities hold in $U_q$. $$\begin{aligned}(1)\quad F_{sj}F_{si}&=(-1)^{\bar F_{si}}q_sF_{si}F_{sj}, s<i<j,\\ (2)\quad F_{is}F_{js}&=(-1)^{\bar F_{js}}q_s^{-1}F_{js}F_{is}, i<j<s.\end{aligned}$$
 For $c<i<j$,
$$(3)\quad  [F_{cj}, E_{ci}]=F_{ij}K_cK_i^{-1}q_i,(4)\quad [F_{ci}, E_{cj}]=E_{ij}K_c^{-1}K_i, $$$$(5)\quad [E_{ij}, F_{cj}]=F_{ci}K_i^{-1}K_j, (6)\quad [E_{cj}, F_{ij}]=E_{ci}K_iK_j^{-1}q_i^{-1}.$$$$(7)\quad [F_{st}, F_{ij}]=-(q_j-q_j^{-1})F_{sj}F_{it}, \quad i<s<j<t.$$
\end{lemma}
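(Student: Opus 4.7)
My plan is to establish the seven identities by a combination of induction on the lengths of the index intervals and direct manipulation, relying on the recursive definitions of $E_{ij}, F_{ij}$, formula (2) stated just above (that is, $[E_{ij}, F_{ij}] = (K_iK_j^{-1} - K_i^{-1}K_j)/(q_i - q_i^{-1})$), the $K$-commutation relations in the remark, and the fact that for every homogeneous $x\in U_q$ the map $[x,-]$ is a super-derivation.

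I would treat (1) and (2) first, as they describe the basic $q$-commutation of $F$-vectors sharing an endpoint and feed into the rest. For (1), fix $s$ and argue by induction on $j-s$. In the base cases, one combines the defining Serre relations (R7), (R8) and the recursive definition $F_{sj}=-q_cF_{sc}F_{cj}+F_{cj}F_{sc}$ to rearrange $F_{sj}F_{si}$ directly. For the inductive step, choose an intermediate index $c$ with $s<c<j$, expand $F_{sj}$ via the recursive definition, and push $F_{si}$ through each of the factors $F_{sc}, F_{cj}$ using the inductive hypothesis (which applies to shorter intervals). Identity (2) is the mirror image of (1) and follows by the same argument with the left and right endpoints exchanged; it can alternatively be read off from (1) by applying the anti-automorphism $\Omega$ of the earlier lemma to the analogous identity for $E$'s and relabelling.

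For (3)--(6) the strategy is to split the longer root vector via the recursive definition and reduce the computation to formula (2) applied to a shorter root pair. To illustrate on (3), I would write $F_{cj}=-q_iF_{ci}F_{ij}+F_{ij}F_{ci}$, splitting at the index $i$, and expand $[F_{cj},E_{ci}]$ using the super-derivation property of $[-,E_{ci}]$. The sub-term $[F_{ij},E_{ci}]$ vanishes because $E_{ci}$ lies in the subalgebra generated by $\{E_{\alpha_c},\ldots,E_{\alpha_{i-1}}\}$ while $F_{ij}$ lies in the subalgebra generated by $\{F_{\alpha_i},\ldots,F_{\alpha_{j-1}}\}$, and these two families super-commute by (R3) and (R5) applied to the disjoint sets of simple roots. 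What survives involves $[F_{ci},E_{ci}]$, which is given by formula (2); pushing the resulting $K$-factor to the right via (R2) produces the stated coefficient $q_i$ and the $K$-monomial $K_cK_i^{-1}$. Identities (4), (5), (6) are proved by the same splitting technique at the appropriate index; one can moreover deduce (5) from (4) and (6) from (3) by applying $\Omega$, which swaps $E$ with $F$ and inverts the $K$'s.

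Identity (7) is the most involved, and I expect it to be the main obstacle, since the index intervals of $F_{st}$ and $F_{ij}$ interleave ($i<s<j<t$) so the computation does not directly reduce to a shared-endpoint case. The plan is to split $F_{st}$ at $j$ as $F_{st}=-q_jF_{sj}F_{jt}+F_{jt}F_{sj}$ and to rewrite $F_{jt}F_{ij}$ via the recursive definition $F_{it}=-q_jF_{ij}F_{jt}+F_{jt}F_{ij}$, which introduces the $F_{sj}F_{it}$ term on the right-hand side. Combining with identity (2) for the pair $F_{sj}, F_{ij}$ (which share the right endpoint $j$) produces two contributions proportional to $q_j$ and $q_j^{-1}$ that collapse into the stated prefactor $-(q_j-q_j^{-1})$. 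The delicate point throughout will be the bookkeeping of signs $(-1)^{\bar F_{\ast\ast}}$ arising from the super-derivation rule, since the parities of $F_{ij}, F_{sj}, F_{it}, F_{st}$ depend on where the various indices lie relative to $m$; I would conclude by a short case analysis on the positions of $s,t,i,j$ relative to $m$ to verify the identity in each case.
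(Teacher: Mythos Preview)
Your approach for (1)--(6) is essentially the paper's: for (1) and (2) the paper says only ``a short computation using the formulas provided by Remark~(1) in Sec.~3.1'' (i.e., the recursive definition of $F_{ij}$), which your induction unpacks; for (3) the paper splits $F_{cj}$ at $i$ exactly as you do, uses that $[-,E_{ci}]$ is a derivation together with formula~(2), then derives (4) from (3) via $\Omega$, with (5) and (6) ``proved similarly.'' One small correction to your parenthetical remark: $\Omega$ interchanges (3)$\leftrightarrow$(4) and (5)$\leftrightarrow$(6), not (4)$\to$(5) and (3)$\to$(6). Indeed $\Omega([F_{ci},E_{cj}])=[\Omega(E_{cj}),\Omega(F_{ci})]=[F_{cj},E_{ci}]$, which returns you to the left side of (3), not (5); the pairing you want is obtained by noting which identities share the same $E$/$F$ pattern after the swap.

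For (7) there is a genuine difference. The paper does not compute at all: it simply applies $\Omega$ to the corresponding identity for $E$'s, namely \cite[Lemma~4.2(6)]{z1}, and is done in one line. Your plan---split $F_{st}$ at $j$, use the recursion $F_{it}=-q_jF_{ij}F_{jt}+F_{jt}F_{ij}$, and feed in identity~(2) for the pair $F_{sj},F_{ij}$---is correct and self-contained, but (as you anticipate) it forces you into a case analysis on the parities of $F_{ij},F_{sj},F_{jt},F_{it}$ according to the positions of $i,s,j,t$ relative to $m$. The paper's route buys brevity at the cost of an external citation; yours buys self-containment at the cost of the sign bookkeeping.
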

\begin{proof} (1) and (2) follow from a short computation using the formulas provided by Remark (1) in Sec. 3.1. \par
(3) By Remark (1) in Sec. 3.1, we have $$[F_{cj}, E_{ci}]=[F_{ij}F_{ci}-q_iF_{ci}F_{ij}, E_{ci}].$$ Since $[-, E_{ci}]$ is a derivation on $U_q$ and $[F_{ij}, E_{ci}]=0$, we have $$[F_{cj}, E_{ci}]=F_{ij}[F_{ci}, E_{ci}]-q_i(-1)^{\bar E_{ci}\bar F_{ij}}[F_{ci},E_{ci}]F_{ij}.$$ Let us note that at least one of the $\bar E_{ci}, \bar F_{ij}$ is $\0\in\mathbb Z_2$. Then Using the formula (2) we have that $$\begin{aligned}
&[F_{cj}, E_{ci}]\\&=-(-1)^{\bar E_{ci}\bar F_{ci}}[F_{ij}\frac{K_cK_i^{-1}-K_c^{-1}K_i}{q_c-q_c^{-1}}-q_i\frac{K_cK_i^{-1}-K_c^{-1}K_i}{q_c-q_c^{-1}}F_{ij}]\\&=F_{ij}K_cK_i^{-1}q_i(-1)^{\bar E_{ci}\bar F_{ci}}\frac{q_i-q_i^{-1}}{q_c-q_c^{-1}}\\&= F_{ij}K_cK_i^{-1}q_i.\end{aligned}$$
It is easy to see that $\Omega([x,y])=[\Omega (y), \Omega (x)]$ for any $x,y\in h(U_q)$, applying which to (3) we obtain (4).\par (5),(6) can be proved similarly.\par (7) follows from an application of $\Omega$ to \cite[Lemma 4.2(6)]{z1}.
\end{proof}

\begin{lemma}\cite{z1}
$$\begin{aligned} (1)\quad [F_{ij}, F_{st}]&=0,\quad  &i<s<t<j,\\
(2)\quad [E_{ij}, F_{st}]&=0,\quad &i<s<t<j, \\ (3) \quad [F_{ij}, E_{st}]&=0, \quad &i<s<t<j.\end{aligned}$$ \end{lemma}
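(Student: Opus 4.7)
The plan is to reduce the three identities to two essentially independent cases by symmetry, and then prove each by a double induction on the lengths involved. First, applying the anti-automorphism $\Omega$ from Lemma~3.1 together with the relation $\Omega([x,y])=[\Omega(y),\Omega(x)]$ (used in the preceding proof), one sees that identity~(2) is equivalent to identity~(3) upon swapping the roles of the two index pairs, while identity~(1) is equivalent to the $E$-analogue $[E_{ij},E_{st}]=0$. So it suffices to treat (1) and (3).

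For identity~(1) the strategy is induction on $t-s$. When $t-s\geq 2$, choose any $c$ with $s<c<t$ and expand $F_{st}=F_{ct}F_{sc}-q_cF_{sc}F_{ct}$ from the recursive definition in Remark~(1) of Sec.~3; since $[F_{ij},-]$ is a super-derivation of $U_q$, the vanishing of $[F_{ij},F_{st}]$ follows from the vanishing of $[F_{ij},F_{sc}]$ and $[F_{ij},F_{ct}]$, both of which are nested commutators with strictly shorter inner interval, so the induction hypothesis applies. This reduces matters to the base case $t=s+1$, where $F_{st}=F_{\alpha_s}$. In that case one takes $c=s$ in the definition of $F_{ij}$, writing $F_{ij}=F_{sj}F_{is}-q_sF_{is}F_{sj}$, and expands $[F_{ij},F_{\alpha_s}]$ via the derivation property. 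The resulting terms involve only $[F_{is},F_{\alpha_s}]$ and $[F_{sj},F_{\alpha_s}]$: the former is rearranged via the definition of $F_{i,s+1}=-q_sF_{is}F_{\alpha_s}+F_{\alpha_s}F_{is}$, and the latter via Lemma~4.1(1). A bookkeeping of the resulting $q$-coefficients then produces the cancellation.

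Identity~(3) is handled by a parallel induction on $t-s$. The inductive step expands $E_{st}=E_{sc}E_{ct}-q_c^{-1}E_{ct}E_{sc}$ and uses that $[F_{ij},-]$ is a super-derivation. The base case $[F_{ij},E_{\alpha_s}]=0$ is the crucial one: expanding $F_{ij}=F_{sj}F_{is}-q_sF_{is}F_{sj}$ and applying the super-derivation $[-,E_{\alpha_s}]$ produces four terms whose inner commutators $[F_{is},E_{\alpha_s}]$ and $[F_{sj},E_{\alpha_s}]$ are computed from (R3) and Lemma~4.1(5)(6); the $K$-factors that appear are commuted to one side using (R2) and the various contributions cancel in the combination. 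Identity~(2) then follows from (3) by applying $\Omega$.

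The main obstacle throughout is tracking the $\mathbb Z_2$-signs produced by the super-bracket (since $F_{ij}$ is odd exactly when $(i,j)\in\mathcal I_1$) together with the $q$-weights arising from (R2). In the strictly nested regime $i<s<t<j$ these signs and $q$-powers are forced to match between the two sides of each computation, which is ultimately what makes the cancellations work; one must however pay particular attention to the case when some intermediate index equals $m$, where the quadratic Serre relation (R4) and the degenerate relation (R8) replace (R6)/(R7). Verifying that the induction steps still close in that subcase is the most delicate part of the argument.
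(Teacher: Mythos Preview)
The paper does not actually prove this lemma; it merely cites \cite{z1} and moves on. So there is nothing in the paper to compare your argument to, and your task is really to supply a proof from scratch.

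Your overall strategy---use $\Omega$ to reduce (2) to (3), then induct on $t-s$ to reduce (1) and (3) to the simple-root inner case---is correct and is the standard way such identities are established. The inductive step is clean: writing $F_{st}$ (or $E_{st}$) via Remark~(1) with any $s<c<t$ and using that $[F_{ij},-]$ is a super-derivation reduces to two strictly shorter nested commutators.

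A few corrections and clarifications on the base cases:
\begin{itemize}
\item For $[F_{ij},E_{\alpha_s}]$ with $i<s<s+1<j$: after splitting $F_{ij}=F_{sj}F_{is}-q_sF_{is}F_{sj}$, the commutator $[F_{is},E_{\alpha_s}]$ is \emph{zero}, not something coming from Lemma~4.1(5)(6). Indeed $F_{is}$ is a polynomial in $F_{\alpha_i},\dots,F_{\alpha_{s-1}}$, each of which commutes with $E_{\alpha_s}$ by (R3). The nonzero piece is $[F_{sj},E_{\alpha_s}]$, and that is Lemma~4.1(3) (with $c=s$, $i=s+1$), not (5)(6). After commuting the $K$-factor through $F_{is}$ via (R2), you are left with a multiple of $F_{s+1,j}F_{is}-F_{is}F_{s+1,j}$, which vanishes because the simple-root generators building $F_{is}$ and $F_{s+1,j}$ all differ in index by at least $2$ and hence commute by (R5). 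This ``disjoint interval'' commutation is the one extra ingredient your outline does not mention explicitly.
\item Your final paragraph overstates the difficulty at $s=m$. The Serre relations (R6)--(R8) do not enter the base-case computation directly; everything is handled by (R2), (R3), (R5), Lemma~4.1, and the recursive definition of $F_{ij}$. The parity bookkeeping at $s=m$ goes through without any special argument once you track $\bar F_{is}$ and $\bar F_{sj}$ correctly.
\end{itemize}
With these adjustments your sketch becomes a complete proof.
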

\begin{lemma} For $i<s<j<t$, we have $$
\begin{aligned} &(a)\quad [E_{ij}, F_{st}]&=&(q_j^{-1}-q_j)(K_sK_j^{-1})F_{jt}E_{is},\\
&(b)\quad [E_{st}, F_{ij}]&=&(q_j-q_j^{-1})F_{is}E_{jt}K_s^{-1}K_j.\end{aligned}$$
\end{lemma}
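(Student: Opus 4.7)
The plan is to expand $F_{st}$ using Remark~(1) of Section~3 with intermediate index $c=j$, giving $F_{st}=F_{jt}F_{sj}-q_jF_{sj}F_{jt}$, and then apply the super-derivation $[E_{ij},-]$. The graded Leibniz rule produces
\begin{align*}
[E_{ij}, F_{st}] =\,&[E_{ij}, F_{jt}]F_{sj} + (-1)^{\bar E_{ij}\bar F_{jt}}F_{jt}[E_{ij}, F_{sj}]\\
&- q_j[E_{ij}, F_{sj}]F_{jt} - q_j(-1)^{\bar E_{ij}\bar F_{sj}}F_{sj}[E_{ij}, F_{jt}],
\end{align*}
so everything reduces to computing the two auxiliary commutators $[E_{ij},F_{sj}]$ (for $i<s<j$) and $[E_{ij},F_{jt}]$ (for $i<j<t$), neither of which is a direct instance of Lemma~4.1 or~4.2.

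For $[E_{ij},F_{jt}]$ I expect vanishing: expand $E_{ij}=E_{i,j-1}E_{j-1,j}-q_{j-1}^{-1}E_{j-1,j}E_{i,j-1}$ (Remark~(1) with $c=j-1$) and apply $[-,F_{jt}]$ via Leibniz; one factor reduces to $[E_{\a_{j-1}},F_{jt}]$ (zero because $F_{jt}$ lies in the subalgebra generated by the $F_{\a_k}$ with $k\ge j$, which commutes with $E_{\a_{j-1}}$ by (R5)), the other to $[E_{i,j-1},F_{jt}]$ (zero by a disjoint-interval argument in the spirit of Lemma~4.2). For $[E_{ij},F_{sj}]$ with $i<s<j$ I would expand $E_{ij}=E_{is}E_{sj}-q_s^{-1}E_{sj}E_{is}$ (Remark~(1) with $c=s$); Leibniz produces $[E_{sj},F_{sj}]=(K_sK_j^{-1}-K_s^{-1}K_j)/(q_s-q_s^{-1})$ (formula~(2) of Section~4) together with the crossing commutator $[E_{is},F_{sj}]$, which again vanishes by further decomposition. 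The Koszul sign $(-1)^{\bar E_{is}\bar F_{sj}}$ is $+1$, since $(i,s)$ and $(s,j)$ cannot both lie in $\mathcal I_1$. Commuting $E_{is}$ through the resulting $K$-factors via (R2) collapses the expression to $[E_{ij},F_{sj}]=q_s^{-1}E_{is}K_sK_j^{-1}$.

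Substituting these back, the two terms involving $[E_{ij},F_{jt}]=0$ drop out; using $[E_{is},F_{jt}]=0$ (disjoint intervals) and the $K$-commutations of (R2) to move $F_{jt}$ past $E_{is}K_sK_j^{-1}$, the remaining two terms combine to $q_s^{-1}(1-q_j^2)F_{jt}E_{is}K_sK_j^{-1}=(q_j^{-1}-q_j)(K_sK_j^{-1})F_{jt}E_{is}$, which is identity~(a). Identity~(b) then follows from~(a) by applying the anti-automorphism $\Omega$ of Lemma~3.1, since $\Omega([x,y])=[\Omega(y),\Omega(x)]$, $\Omega(E_{ij})=F_{ij}$, $\Omega(F_{ij})=E_{ij}$, $\Omega(K_s)=K_s^{-1}$ and $\Omega(q_j)=q_j^{-1}$; after applying $\Omega$, the $K$-factors on the right in~(b) are produced by commuting through $F_{is}E_{jt}$ via (R2).

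The main obstacle is establishing the vanishing of the three crossing/disjoint commutators $[E_{ij},F_{jt}]$, $[E_{is},F_{sj}]$, and $[E_{is},F_{jt}]$: none is a direct instance of Lemma~4.1 or~4.2, and each requires a nested decomposition. The remaining difficulty is pure bookkeeping — tracking Koszul signs and systematically applying the $K$-commutations of (R2) to arrange the final $K_sK_j^{-1}$ correctly on the left (in~(a)) and on the right (in~(b)).
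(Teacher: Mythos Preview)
Your approach is essentially the paper's: decompose $F_{st}=F_{jt}F_{sj}-q_jF_{sj}F_{jt}$, apply the derivation $[E_{ij},-]$, drop the $[E_{ij},F_{jt}]$ terms, and commute the $K$'s. The one place you make extra work for yourself is the computation of $[E_{ij},F_{sj}]$: this \emph{is} a direct instance of Lemma~4.1(6) --- relabel $(c,i,j)\mapsto(i,s,j)$ there to read $[E_{ij},F_{sj}]=E_{is}K_sK_j^{-1}q_s^{-1}$ immediately, with no need to expand $E_{ij}$ at $c=s$. The paper simply cites Lemma~4.1(6) at this step and then moves the $K$'s to the left, using (implicitly, as you do) that $E_{is}$ and $F_{jt}$ commute because their simple-root supports $[i,s)$ and $[j,t)$ are disjoint; your careful flagging of the ``disjoint'' vanishings is more than the paper itself provides, but they are routine consequences of (R3) and (R5). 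Part~(b) via $\Omega$ is exactly what the paper does.
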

\begin{proof} It suffices to prove (a), (b) follows from the application of $\Omega$ to (a). Since $[E_{ij},-]$ is a derivation of $U_q$, we have $$\begin{aligned}
 &[E_{ij}, F_{st}]\\&=[E_{ij}, F_{jt}F_{sj}-q_jF_{sj}F_{jt}]\\&=F_{jt}[E_{ij}, F_{sj}]-q_j[E_{ij}, F_{sj}]F_{jt}\\(\text{Using Lemma 4.1(6)})&=F_{jt}E_{is}K_sK_j^{-1}q_s^{-1}-q_jE_{is}K_sK_j^{-1}q_s^{-1}F_{jt}\\&=(q_j^{-1}-q_j)(K_sK_j^{-1})F_{jt}E_{is}.
 \end{aligned}$$\end{proof}
\section{The simplicity of Kac modules}
There is an order $\prec$ defined  on the  set of elements $E_{ij}, (i,j)\in\mathcal I$(\cite{z1}):  $$E_{ij}\prec E_{st}\quad\text{if}\quad (i,j)\in\mathcal I_0\quad \text{and}\quad (s,t)\in\mathcal I_1$$ or $$ (i,j), (s,t)\in \mathcal I_{\theta}, \theta=0,1,  i<s\quad\text{or}\quad i=s\quad\text{and}\quad j<t,$$ $$F_{ij}\prec F_{st}\quad \text{if and only if}\quad E_{ij}\succ E_{st}.$$ For each $\d\in \{0,1\}^{\mathcal I_1}$, let $E_1^{\d}$ denote the product $\Pi_{(i,j)\in\mathcal I_1}E_{ij}^{\d_{ij}}$ in the order given above. Let $F^{\d}_1=\Omega (E_1^{\d})$. \par
Set $$\mathcal N_1=\langle E_1^{\d}|\d\in\{0,1\}^{\mathcal I_1}\rangle, \mathcal N_{-1}=\langle F_1^{\d}|\d\in\{0,1\}^{\mathcal I_1}\rangle,$$$$ \mathcal N_{-1}^+=\langle F_1^{\d}|\sum \d_{ij}>0\rangle, \mathcal N_1^+=\langle E_1^{\d}|\sum \d_{ij}>0\rangle.$$ By \cite{z1}, these are subalgebras of $U_q$, and $$\begin{aligned}
U_q&=\mathcal N_{-1}U_q(\g_{\0})\mathcal N_1\\&\cong \mathcal N_{-1}\otimes U_q(\g_{\0})\otimes \mathcal N_1.\end{aligned}$$ By applying the $\mathbb Z_2$-graded anti-automorphism $\Psi$, we get $$\begin{aligned}
U_q&=\mathcal N_{1}U_q(\g_{\0})\mathcal N_{-1}\\&\cong \mathcal N_1\otimes U_q(\g_{\0})\otimes \mathcal N_{-1}.\end{aligned}$$  The subalgebra $U_q(\g_{\0})\mathcal N_1$(resp. $\mathcal N_{-1}U_q(\g_{\0})$) has a nilpotent ideal $U_q(\g_{\0})\mathcal N_1^+$(resp. $\mathcal N_{-1}^+U_q(\g_{\0})$), by which each simple $U_q(\g_{\0})$-module is annihilated.  Therefore, each simple $U_q(\g_{\0})\mathcal N_1$-module can be identified with a simple $U_q(\g_{\0})$-module(cf. \cite{z1}).\par
Let $U^0$ be the subalgebra of $U_q$ generated by the elements $K_i^{\pm 1}$, $i\in [1,m+n]$. Then by the PBW theorem(\cite{z1}), $U^0$ is a polynomial algebra in variables $K_i^{\pm 1}, i\in [1,m+n]$. Let $K^{\mu}=\Pi_{i=1}^{m+n}K_i^{\mu_i}$ for $\mu=\sum_{i=1}^{m+n}\mu_i\e_i\in\Lambda$.  Denote $$X(U^0)=:\text{Hom}_{\mathbb C(q)-alg}(U^0,\mathbb C(q)).$$ Each $\l\in X(U^0)$ is completely determined by $\l(K_i)\in \mathbb C(q)^*, i\in [1,m+n]$.  Then $X(U^0)$ is an additive group with the addition defined by $$(\l_1+\l_2)(K^{\mu})=\l_1(K^{\mu})\l_2(K^{\mu}), \mu\in\Lambda.$$ Each $\l\in X(U^0)$ is called a weight for $U_q$. Note that $\Lambda$ can be canonically imbedded in $X(U^0)$ by letting $$\mu(K_i)=q_i^{\mu_i},  i\in [1,m+n], \mu=\sum_{i=1}^{m+n}\mu_i\e_i.$$ A weight in $\Lambda$ is called {\sl integral}. Clearly we have $\l(K^{\mu})=q^{(\l,\mu)}$ for $\l,\mu\in \Lambda$.\par Let $M$ be a $U_q(\g_{\0})$-module and let $\l\in X(U^0)$, let $$M_{\l}=\{x\in M|ux=\l(u)x,  u\in U^0\}.$$ A nonzero vector $v\in M_{\l}$ is called a maximal vector of weight $\l$ if $E_{ij}v^+=0$ for all $(i,j)\in \mathcal I_0$. If $M$ is finite dimensional, then $M=\sum M_{\l}$(\cite[Prop. 5.1]{j1}). If $M$ is a finite dimensional simple $U_q(\g_{\0})$-module, then there is a maximal vector, unique up to scalar multiple,  which generates $M$. In this case we denote $M$ by $M(\l)$. Regard $M(\l)$ as a $U_q(\g_{\0})\mathcal N_1$-module annihilated by $U_q(\g_{\0})\mathcal N_1^+$. Define the Kac module $$K(\l)=U_q\otimes _{U_q(\g_{\0})\mathcal N_1}M(\l).$$
Then we have $K(\l)=\mathcal N_{-1}\otimes_{\mathbb F}M(\l)$ as $\mathcal N_{-1}$-modules. \par
To study the simplicity of $K(\l)$, we define a new order on $\mathcal I_1$ by $$ (i,j)\prec (s,t)\quad\text{if $j>t$ or $j=t$ but $i<s$}.$$ We denote $(i,j)\preceq (s,t)$ if $(i,j)\prec (s,t)$ or $(i,j)=(s,t)$.  We define $F_{ij}\prec F_{st}$ if $(i,j)\prec (s,t)$.\par For each subset $I\subseteq \mathcal I_1$, denote by $F_I$ the product $\Pi_{(i,j)\in I}F_{ij}$ in the new order. In particular, we let $F_{\phi}=1$. For each $I\subseteq \mathcal I_1$, set $E_I=\Omega (F_I)$.\par
 For each $(i,j)\in\mathcal I_1$, denote  by ${> (i,j)}$(resp. $\geq (i,j)$; $<(i,j); \leq (i,j)$) the subset $$\begin{aligned} &\{(s,t)\in\mathcal I_1|(s,t)\succ (i,j)\}\\&(\text{resp.} \{(s,t)\in\mathcal I_1|(s,t)\succeq (i,j)\};\\& \{(s,t)\in\mathcal I_1|(s,t)\prec (i,j)\}; \\&\{(s,t)\in\mathcal I_1|(s,t)\preceq (i,j)\}).\end{aligned}$$ For $(i,j), (s,t)\in\mathcal I_1$ with $(i,j)\prec (s,t)$, set $$((i,j),(s,t))=\{(i',j')\in\mathcal I_1|(i,j)\prec (i',j')\prec (s,t)\}.$$
  Then we have  $$  F_{> (m,m+1)}=F_{<(1,m+n)}=1\quad\text{and}\quad F_{\mathcal I_1}=F_{<(i,j)}F_{\geq (i,j)}=F_{\leq (i,j)}F_{>(i,j)}$$ for any $(i,j)\in\mathcal I_1$.\par

 \begin{lemma}(a) $\mathcal N_{-1}$(resp. $\mathcal N_{-1}^+$) has a basis $F_I$, $I\subseteq \mathcal I_1$(resp. $\phi\neq I\subseteq \mathcal I_1$).\par (b) $\mathcal N_1$(resp. $\mathcal N_1^+$) has a basis $E_I$, $I\subseteq \mathcal I_1$(resp. $\phi\neq I\subseteq \mathcal I_1$).  \end{lemma}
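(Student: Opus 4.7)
By the PBW theorem for $U_q$ quoted from \cite{z1} at the beginning of this section, the set $\{F_1^\d : \d\in\{0,1\}^{\mathcal I_1}\}$ is a $\mathbb C(q)$-basis of $\mathcal N_{-1}$, so $\dim_{\mathbb C(q)}\mathcal N_{-1}=2^{|\mathcal I_1|}$. Since the proposed family $\{F_I : I\subseteq\mathcal I_1\}$ has the same cardinality, it suffices to show that it spans $\mathcal N_{-1}$: linear independence then follows by counting, the statement for $\mathcal N_{-1}^+$ follows by excluding $I=\phi$, and part (b) is immediate from (a) by applying the anti-automorphism $\Omega$, which bijects $\mathcal N_{-1}$ with $\mathcal N_1$ and sends $F_I$ to $E_I$ by the very definition of $E_I$.

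To prove spanning I plan to show that every ordered word $w=F_{p_1}F_{p_2}\cdots F_{p_n}$ with $p_l\in\mathcal I_1$ lies in the $\mathbb C(q)$-span of $\{F_I\}$, by bubble-sorting adjacent pairs into the new order $\prec$. A short case analysis of an out-of-order adjacent pair $F_{ij}F_{st}$ (with $(s,t)\prec(i,j)$ and both pairs in $\mathcal I_1$, hence both odd) reduces to three possibilities. When the two pairs share a coordinate, Lemma 4.1(1) or Lemma 4.1(2) gives a clean scalar swap of the shape $F_{ij}F_{st}=-q_*^{\pm 1}F_{st}F_{ij}$. When the four indices nest, i.e.\ $i<s<t<j$ or (after relabelling) $s<i<j<t$, Lemma 4.2(1) gives the clean anticommuting swap $F_{ij}F_{st}=-F_{st}F_{ij}$. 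The only messy case is the crossing configuration $i<s<j<t$, for which Lemma 4.1(7) gives
\[
F_{ij}F_{st}\;=\;-F_{st}F_{ij}\;+\;(q_j-q_j^{-1})F_{it}F_{sj},
\]
the extra term $F_{it}F_{sj}$ having been placed in new order via the nested swap applied to $(i,t)\prec(s,j)$.

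The main obstacle is arguing termination of this rewriting. For this, put on words the invariant $\phi(w)=\sum_l i_lj_l$. Clean swaps preserve the multiset $\{p_l\}$ and hence preserve $\phi$; in the crossing case, $\phi$ is preserved on the main term and \emph{strictly} decreases on the extra term, since $ij+st-(it+sj)=(s-i)(t-j)>0$ when $i<s<j<t$. Order words lexicographically by $(n,\phi,\mathrm{inv})$, where $\mathrm{inv}$ counts adjacent inversions with respect to $\prec$. Each clean swap preserves $n$ and $\phi$ while decreasing $\mathrm{inv}$; the crossing rewrite splits a word into a main term with the same $n$ and $\phi$ but smaller $\mathrm{inv}$, and an extra term with strictly smaller $\phi$. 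This lexicographic order is well-founded, so the induction terminates in finitely many steps with a $\mathbb C(q)$-linear combination of fully sorted words. Any sorted word containing a repeated generator has the two copies adjacent (since $\prec$ is a total order on $\mathcal I_1$) and vanishes by $F_{ij}^2=0$ from formula (1) of Sec.\ 4; what remains is the desired $\mathbb C(q)$-linear combination of $F_I$'s, completing (a).
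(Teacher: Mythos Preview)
Your proof is correct and follows essentially the same route as the paper: reduce (b) to (a) via $\Omega$, reduce (a) to spanning by a dimension count, and straighten an arbitrary product into the new order by the same case analysis of adjacent swaps (Lemma~4.1(1),(2),(7) and Lemma~4.2(1)). The only difference is that the paper's termination step is the terse phrase ``by induction on the cardinality $|I|$,'' whereas your lexicographic invariant $(n,\phi,\mathrm{inv})$ with $\phi(w)=\sum_l i_lj_l$ makes well-foundedness explicit---a gain in rigor, not a change of strategy.
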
 \begin{proof} Since $\mathcal N_1=\Omega (\mathcal N_{-1})$, (b) follows from the application of $\Omega$ to (a).\par (a). Clearly the number of the above elements is equal to $\text{dim}\mathcal N_{-1}$. We only need to show that the elements $F_I$ span $\mathcal N_{-1}$. \par
 First we claim that any product $F_{ij}F_{st}$, $(i,j), (s,t)\in\mathcal I_1$ can be written as an $\mathbb Z[q,q^{-1}]$-linear combination of products in the new order.
  The case where $j=t$ and $i>s$ follows from Lemma 4.1(2).   The cases where $j<t$ and $i\geq s$ follow from Lemma 4.1(1) and Lemma 4.2(1). The only case left is $i<s\leq m<j<t$, in which we have by Lemma 4.1(7) that $$ \begin{aligned}F_{ij}F_{st}&=-F_{st}F_{ij}-(q_j-q_j^{-1})F_{sj}F_{it}\\(\text{Using Lemma 4.2(1)})&=-F_{st}F_{ij}+(q_j-q_j^{-1})F_{it}F_{sj}.\end{aligned}$$ Thus, the claim follows.\par  Since $\mathcal I_1$ is a finite set, by  induction on the cardinality $|I|$ of $I$ we obtain that each product $\Pi_{(i,j)\in I\subseteq \mathcal I_1}F_{ij}$ in any order can be written as a $\mathbb Z[q,q^{-1}]$-linear combination of elements $F_{I'}, I'\subseteq \mathcal I_1$. \end{proof}  By the lemma, each element in $K(\l)$ is in the form $\sum_{I\subseteq \mathcal I_1} F_I\otimes v_I, v_I\in M(\l)$.
  \begin{lemma}  Let $(i,k)\in \mathcal I_1$. Then  $F_{st}F_{\geq(i,k)}=0$ for any $F_{st}\succeq F_{ik}$ or, equivalently $F_{st}F_{>(i,k)}=0$ for any $F_{st}\succ F_{ik}$.
 \end{lemma}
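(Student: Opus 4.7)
The plan is to induct on $(i,k)\in\mathcal I_1$ in decreasing $\prec$-order. The base case $(i,k)=(m,m+1)$ is immediate, since $\geq(m,m+1)=\{(m,m+1)\}$ and $F_{m,m+1}^2=0$ by formula (1) of Sec.~4. For the inductive step, let $(i',k')$ denote the immediate $\prec$-successor of $(i,k)$---that is, $(i+1,k)$ if $i<m$ and $(1,k-1)$ if $i=m$---so that $F_{\geq(i,k)}=F_{ik}\,F_{\geq(i',k')}$. The case $(s,t)=(i,k)$ is immediate from $F_{ik}^2=0$, so assume $(s,t)\succ(i,k)$; then $(s,t)\succeq(i',k')$, and the inductive hypothesis gives $F_{st}F_{\geq(i',k')}=0$.

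The strategy is to commute $F_{st}$ past the leading $F_{ik}$ so that the tail $F_{st}F_{\geq(i',k')}$ vanishes by induction. Three subcases of $(s,t)\succ(i,k)$ commute cleanly, producing no extra term: (I) $t=k$ and $s>i$, where Lemma~4.1(2) gives $F_{sk}F_{ik}=-q_kF_{ik}F_{sk}$; (II.b) $s=i$ and $t<k$, where Lemma~4.1(1) gives $F_{it}F_{ik}=-q_i^{-1}F_{ik}F_{it}$; and (II.c) $i<s\leq m<t<k$, where Lemma~4.2(1) gives $[F_{ik},F_{st}]=0$ and hence $F_{st}F_{ik}=-F_{ik}F_{st}$. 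In all three, $F_{st}F_{ik}=\lambda F_{ik}F_{st}$ for a scalar $\lambda$, and the induction closes the case.

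The main obstacle is the remaining case (II.a), $s<i\leq m<t<k$, where Lemma~4.1(7) produces a genuine extra term:
\[ F_{st}F_{ik}=-F_{ik}F_{st}-(q_t-q_t^{-1})F_{it}F_{sk}. \]
After killing the first piece by induction, it remains to show $F_{it}F_{sk}F_{\geq(i',k')}=0$. The trouble is that $F_{sk}$ is $\prec$-smaller than every factor in $F_{\geq(i',k')}$, so induction does not apply to it directly. The crucial observation is that the \emph{same} inequalities $s<i<t<k$ that invoke Lemma~4.1(7) also qualify the pair $(F_{sk},F_{it})$ for Lemma~4.2(1), so $[F_{sk},F_{it}]=0$ and $F_{it}F_{sk}=-F_{sk}F_{it}$. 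Since $(i,t)\succ(i,k)$ (as $t<k$), we have $(i,t)\succeq(i',k')$, and by induction $F_{it}F_{\geq(i',k')}=0$; combined with the swap, $F_{it}F_{sk}F_{\geq(i',k')}=-F_{sk}(F_{it}F_{\geq(i',k')})=0$, which finishes the proof.
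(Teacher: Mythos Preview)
Your proof is correct and follows essentially the same approach as the paper: induction on the size of $\geq(i,k)$, peeling off the leading factor $F_{ik}$, commuting $F_{st}$ past it, and in the overlapping case $s<i<t<k$ handling the extra term from Lemma~4.1(7) by using Lemma~4.2(1) to swap $F_{it}F_{sk}$ to $-F_{sk}F_{it}$ so that induction applies. Your case split for $t<k$, $s\geq i$ into $s=i$ (Lemma~4.1(1)) and $s>i$ (Lemma~4.2(1)) is slightly more explicit than the paper's, but the argument is the same.
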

 \begin{proof}  Denote the set $\geq (i,k)$ by $I$. We proceed with induction on $|I|$. The case  $|I|=1$ is trivial. Assume the lemma for $|I|<d$ and consider the case $|I|=d>1$.\par  Note that $F_I=F_{i,k}F_{>(i,k)}$. By Lemma 4.1(2) and the formula (1) in the preceding section, we have $F_{st}F_I=0$ for any $(s,t)\in \mathcal I_1$ with $t=k$.\par Suppose $t<k$.  If $s\geq i$, by Lemma 4.1(2) and the formulas (1) in Sec. 4 we have $$F_{st}F_I=\pm q^{z}F_{ik}F_{st}F_{>(i,k)}=0, z\in\mathbb Z,$$ where the last equality is given by the induction hypothesis.\par If $s<i$, then we must have  $s<i\leq m<t<k$. Note that $F_{it}\succ F_{ik}$ and $F_{st}\succ F_{ik}$. Then using Lemma 4.1(7)  and the induction hypothesis we obtain $$F_{st}F_I=-F_{ik}F_{st}F_{>(i,k)}+(q_t-q_t^{-1})F_{sk}F_{it}F_{>(i,k)}=0.$$
 \end{proof}
 By a similar proof we can show that
 \begin{lemma}Let $(i,k)\in\mathcal I_1$.  Then  $F_{\leq (i,k)}F_{st}=0$ for any $F_{st}\preceq F_{ik}$ or, equivalently, $F_{< (i,k)}F_{st}=0$ for any $F_{st}\prec F_{ik}$.
 \end{lemma}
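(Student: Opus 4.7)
The plan is to mirror the proof of the preceding lemma. Since $F_{ik}$ now sits at the right end of $F_{\leq(i,k)}=F_{<(i,k)}F_{ik}$, I will commute $F_{ik}$ past $F_{st}$ rather than the reverse. The two formulations are equivalent, so I focus on proving $F_{<(i,k)}F_{st}=0$ for $(s,t)\prec(i,k)$ by induction on $(i,k)$ in the $\prec$ order; the $\leq$-form then follows by combining $F_{ik}^2=0$ with the same commutation analysis.

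The base case $(i,k)=(1,m+n)$ gives $<(i,k)$ empty and the statement is vacuous. For the inductive step, let $(i_0,k_0)$ be the immediate $\prec$-predecessor of $(i,k)$, so $F_{<(i,k)}=F_{<(i_0,k_0)}F_{i_0,k_0}$. Given $(s,t)\prec(i,k)$ we have $(s,t)\preceq(i_0,k_0)$. The case $(s,t)=(i_0,k_0)$ is dispatched by $F_{i_0,k_0}^2=0$. For $(s,t)\prec(i_0,k_0)$ I rewrite $F_{i_0,k_0}F_{st}$ via the relations of Lemmas~4.1 and~4.2 so that every resulting monomial begins with some $F_{s',t'}$ with $(s',t')\prec(i_0,k_0)$; the induction hypothesis then forces $F_{<(i_0,k_0)}F_{s',t'}=0$.

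Four index configurations arise: (i) $t=k_0,\ s<i_0$, handled by the scalar swap of Lemma~4.1(2); (ii) $t>k_0,\ s=i_0$, handled by the scalar swap of Lemma~4.1(1); (iii) $t>k_0,\ s<i_0$, yielding the nested configuration $s<i_0<k_0<t$ and the anticommutation $F_{i_0,k_0}F_{s,t}=-F_{s,t}F_{i_0,k_0}$ via Lemma~4.2(1); and (iv) $t>k_0,\ i_0<s$, yielding the crossing configuration $i_0<s<k_0<t$ and
$$F_{i_0,k_0}F_{s,t}=-F_{s,t}F_{i_0,k_0}-(q_{k_0}-q_{k_0}^{-1})F_{s,k_0}F_{i_0,t}$$
via Lemma~4.1(7). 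In configurations (i)--(iii) the leading factor is already $\prec(i_0,k_0)$ and the induction hypothesis finishes the case.

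The main obstacle is case (iv), where the correction term has leading factor $F_{s,k_0}$ with $(s,k_0)\succ(i_0,k_0)$, so the induction hypothesis does not apply directly. The remedy is that the same inequalities $i_0<s<k_0<t$ also place $(s,k_0)$ and $(i_0,t)$ in the nested configuration of Lemma~4.2(1), giving $F_{s,k_0}F_{i_0,t}=-F_{i_0,t}F_{s,k_0}$; after this second swap the leading factor becomes $F_{i_0,t}$ with $(i_0,t)\prec(i_0,k_0)$, and the induction hypothesis closes the argument.
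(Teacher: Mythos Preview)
Your argument is correct and is precisely the mirror of the paper's proof of Lemma~5.2, which is exactly what the paper indicates for Lemma~5.3 (``By a similar proof we can show that''). The extra swap you perform in case~(iv) via Lemma~4.2(1) is the analogue of the implicit reordering of the correction term $F_{it}F_{sk}$ to $F_{sk}F_{it}$ in the paper's handling of the crossing case $s<i<t<k$ in Lemma~5.2.
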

\begin{proposition} For every $(i,j)\in\mathcal I_1$, there are $z_1, z_2\in \mathbb Z$ such that $$\begin{aligned}
(1)\quad F_{ij}F_{<(i,j)}&=\pm q^{z_1}F_{\leq (i,j)}\\(2)\quad F_{>(i,j)}F_{ij}&=\pm q^{z_2}F_{\geq (i,j)}.\end{aligned}$$
\end{proposition}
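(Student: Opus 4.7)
The plan is to prove (1) by induction on $|T|$ for the following generalized claim: for any downward-closed subset $T$ of $<(i,j)$ (with respect to the new order $\prec$), there is a scalar $c$ of the form $\pm q^z$ such that $F_{ij} F_T = c \cdot F_T F_{ij}$, where $F_T$ denotes the ordered product of the $F_{s't'}$, $(s',t') \in T$, taken in the new order. Setting $T = <(i,j)$ and recalling $F_{\leq(i,j)} = F_{<(i,j)} F_{ij}$ then yields (1). The base case $T = \emptyset$ is trivial. For the inductive step, let $(s,t) = \max T$ and set $T' = T \setminus \{(s,t)\}$, so that $T' = <(s,t)$ and $F_T = F_{T'} F_{st}$; by the inductive hypothesis, $F_{ij} F_{T'} = c' F_{T'} F_{ij}$ for some $c'$ of the required form, hence $F_{ij} F_T = c' F_{T'} F_{ij} F_{st}$, and the step reduces to analyzing the single commutation $F_{ij} F_{st}$.

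Classify $(s,t) \prec (i,j)$ into four subcases: (i) $t = j$ and $s < i$, handled by Lemma 4.1(2); (ii) $s = i$ and $t > j$, handled by Lemma 4.1(1); (iii) the \emph{nested} configuration $s < i < j < t$, handled by Lemma 4.2(1); and (iv) the \emph{interleaved} configuration $i < s < j < t$, handled by Lemma 4.1(7). Cases (i)--(iii) each produce a pure rescaling $F_{ij} F_{st} = \tilde c \cdot F_{st} F_{ij}$ with $\tilde c \in \{-q^{-1},\,-1\}$, closing the step with $c = c' \tilde c$. Case (iv) yields the troublesome identity $F_{ij} F_{st} = -F_{st} F_{ij} - (q_j - q_j^{-1}) F_{sj} F_{it}$, and the crux is to show that the extra contribution $c' F_{T'} F_{sj} F_{it}$ vanishes. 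Since $i < s < j < t$, Lemma 4.2(1) gives the super-commutation $F_{sj} F_{it} = -F_{it} F_{sj}$, hence $F_{T'} F_{sj} F_{it} = -F_{T'} F_{it} F_{sj}$; and since $T' = <(s,t)$ with $(i,t) \prec (s,t)$ (same second index, smaller first), Lemma 5.3 immediately gives $F_{T'} F_{it} = F_{<(s,t)} F_{it} = 0$, closing the induction.

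Part (2) is proved by the mirror induction on upward-closed subsets $U$ of $>(i,j)$ for the claim $F_U F_{ij} = c \cdot F_{ij} F_U$, applying the same four-way case split to $F_{ab} F_{ij}$ with $(a,b) = \min U$. The only case producing an extra term is the interleaved configuration $a < i < b < j$, in which Lemma 4.1(7) gives $F_{ab} F_{ij} = -F_{ij} F_{ab} - (q_b - q_b^{-1}) F_{ib} F_{aj}$; one super-commutes $F_{ib}$ past $F_{aj}$ via Lemma 4.2(1) and then invokes Lemma 5.2 with $(i,b) \succ (a,b)$ to annihilate $F_{ib} F_{U \setminus \{(a,b)\}}$. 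The main obstacle throughout is noticing that the new $\prec$-order is arranged precisely so that the residual factor $F_{it}$ (respectively $F_{ib}$) produced by Lemma 4.1(7) already lies inside the already-assembled ordered product, which is exactly what enables Lemmas 5.3 and 5.2 to kill the extra term; granted this, the scalars accumulate to the stated $\pm q^{z_1}$ and $\pm q^{z_2}$.
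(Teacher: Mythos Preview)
Your proof is correct and follows essentially the same approach as the paper's. The only difference is presentational: you package the argument as a formal induction on initial (respectively final) segments of $\mathcal I_1$, whereas the paper writes it as an iteration that singles out only the interleaved factors $(k_1,s_1),\dots,(k_p,s_p)$ with $i<k_t<j<s_t$ and moves $F_{ij}$ past each of them in turn; in both versions the extra term from Lemma~4.1(7) is killed by Lemma~5.3 (for (1)) and Lemma~5.2 (for (2)) in exactly the way you describe.
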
\begin{proof}(1) Let  $(k,s)\in\mathcal I_1$.  By Lemma 4.1, 4.2 we have,  $$F_{i,j}F_{k,s}=\begin{cases} -F_{ks}F_{i,j}, &\text{if $k<i$ and $s>j$}\\ q_kF_{ks}F_{ij},&\text {if $i=k$ and $s>j$}\\ -F_{ks}F_{i,j}+(q_j-q_j^{-1})F_{i,s}F_{kj},& \text{if $i<k<j<s$.}\end{cases}$$ Let $(k_1,s_1),\dots, (k_p,s_p)$ be all the pairs in the set $\leq (i,j)$ such that $i<k_t<j<s_t, t=1,\cdots,p$, so that $F_{i,s_t}\prec F_{k_t,s_t}$.  Then there are integers $z'_1,\dots,z'_p$ such that $$\begin{aligned}
F_{ij}F_{<(i,j)}&=F_{ij}F_{<(k_1,s_1)}F_{k_1,s_1}F_{((k_1,s_1),(i,j))}\\&=\pm q^{z'_1}F_{<(k_1,s_1)}(F_{ij}F_{k_1,s_1})F_{((k_1,s_1),(i,j))}\\&=\pm q^{z'_1}F_{<(k_1,s_1)}(-F_{k_1,s_1}F_{ij}+(q_j-q_j^{-1})F_{i,s_1}F_{k_1,j})F_{((k_1,s_1),(i,j))}\\(\text{Using Lemma 5.3} )&=\pm q^{z'_1}F_{\leq (k_1,s_1)}F_{ij}F_{((k_1,s_1),(i,j))}\\&=\cdots\\&=\pm q^{z'_p}F_{<(i,j)}F_{ij}\\&=\pm q^{z'_p}F_{\leq (i,j)}.\end{aligned}$$ (2) can be verified similarly.
\end{proof}
As an immediate consequence,  we have \begin{corollary}Let $(i,j),(s,t)\in\mathcal I_1$ with $(s,t)\preceq (i,j)$. Then
 $F_{st}F_{\leq (i,j)}=0$ .\end{corollary}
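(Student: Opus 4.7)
The plan is to reduce the statement to the single identity $F_{st} F_{\leq (s,t)} = 0$ for every $(s,t) \in \mathcal{I}_1$. Once this is established, the case $(s,t) = (i,j)$ of the corollary is immediate, and the case $(s,t) \prec (i,j)$ follows at once from a factorization of $F_{\leq (i,j)}$ in the new $\prec$-order.

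First I verify that $F_{st} F_{\leq (s,t)} = 0$. Writing $F_{\leq (s,t)} = F_{<(s,t)} F_{st}$ (smallest factors on the left), this reduces to showing $F_{st} F_{<(s,t)} F_{st} = 0$. Proposition~5.4(1) applied at $(s,t)$ gives
\[
F_{st} F_{<(s,t)} \;=\; \pm q^{z} F_{\leq (s,t)} \;=\; \pm q^{z} F_{<(s,t)} F_{st},
\]
so $F_{st}$ passes through $F_{<(s,t)}$ up to a sign and a power of $q$. Multiplying on the right by $F_{st}$ produces the factor $F_{st}^{2}$, which vanishes by the relation $F_{ij}^{2}=0$ for $(i,j)\in\mathcal{I}_1$ recorded as identity (1) in Section~4.

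Now take the general case $(s,t) \preceq (i,j)$. If $(s,t) = (i,j)$ there is nothing more to do. If $(s,t) \prec (i,j)$, the set $\leq (i,j)$ decomposes as the disjoint union
\[
\leq(i,j) \;=\; \leq(s,t) \;\sqcup\; \bigl((s,t),(i,j)\bigr) \;\sqcup\; \{(i,j)\},
\]
and listing the factors in the new $\prec$-order puts $F_{\leq (s,t)}$ at the far left, yielding
\[
F_{\leq (i,j)} \;=\; F_{\leq (s,t)} \cdot F_{((s,t),(i,j))} \cdot F_{ij}.
\]
Multiplying by $F_{st}$ on the left and grouping,
\[
F_{st} F_{\leq (i,j)} \;=\; \bigl(F_{st} F_{\leq (s,t)}\bigr) \cdot F_{((s,t),(i,j))} \cdot F_{ij} \;=\; 0
\]
by the first step.

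There is essentially no obstacle here; the proof rests entirely on Proposition~5.4(1) and the relation $F_{ij}^{2} = 0$. The only point requiring a little care is the order convention, namely checking that the new $\prec$-order really does place $F_{\leq (s,t)}$ at the left end of $F_{\leq (i,j)}$ so that the factorization above is valid, and checking that Proposition~5.4(1) is available at the index $(s,t)$ (which it is, since $(s,t) \in \mathcal{I}_1$).
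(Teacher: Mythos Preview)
Your proof is correct and is precisely the ``immediate consequence'' the paper alludes to: factor $F_{\leq (i,j)}=F_{<(s,t)}F_{st}\cdot(\text{rest})$, use Proposition~5.4(1) to slide $F_{st}$ past $F_{<(s,t)}$, and kill the resulting $F_{st}^{2}$. The paper gives no further details, so your write-up in fact supplies what the paper omits.
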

  \begin{lemma} Each nonzero submodule of  $K(\l)$ contains $F_{\mathcal I_1}\otimes v$ for some $0\neq v\in M(\l)$.
  \end{lemma}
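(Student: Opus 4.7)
My strategy is to produce a lowest-weight vector in $V$ and show it must have the claimed form. Because $K(\l)$ is a finite-dimensional weight module and $V$ inherits the weight decomposition, I would pick a nonzero weight vector $w \in V$ whose weight $\mu$ is minimal with respect to the partial order $\mu' \leq \mu \iff \mu - \mu' \in \sum_{\alpha \in \Phi^+} \mathbb Z_{\geq 0}\alpha$. For each $(s,t) \in \mathcal I_1$ the vector $F_{st}w \in V$ has weight $\mu - (\epsilon_s - \epsilon_t)$, strictly below $\mu$, so minimality forces $F_{st}w = 0$, i.e.\ $\mathcal N_{-1}^+ w = 0$.

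Using Lemma 5.1 I write $w = \sum_{I\subseteq \mathcal I_1} F_I \otimes v_I$. The commutation identities of Lemmas 4.1 and 4.2 all preserve the total number of $F$-factors in a product, so $\mathcal N_{-1}$ is naturally $\mathbb Z$-graded with $\mathcal N_{-1}^k = \mathrm{span}\{F_I : |I|=k\}$, and left multiplication by any $F_{ij}$ raises the degree by one. Consequently the condition $\mathcal N_{-1}^+ w = 0$ holds degreewise, and I may assume $w$ is homogeneous of some degree $d$. Since $\mathcal N_{-1}^{|\mathcal I_1|} = \mathbb C(q)F_{\mathcal I_1}$ is one-dimensional, the case $d = |\mathcal I_1|$ yields $w = F_{\mathcal I_1}\otimes v$ immediately, with $v\neq 0$ because $w\neq 0$.

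To dispose of $d < |\mathcal I_1|$, observe that for each $I_0 \subseteq \mathcal I_1$ with $|I_0| = d$ the element $F_{\mathcal I_1\setminus I_0}$ lies in $\mathcal N_{-1}^+$, so $F_{\mathcal I_1\setminus I_0}w = 0$. Expanding $F_{\mathcal I_1\setminus I_0}w = \sum_{|I|=d} (F_{\mathcal I_1\setminus I_0} F_I)\otimes v_I$ and using the grading, each $F_{\mathcal I_1\setminus I_0}F_I$ lies in $\mathcal N_{-1}^{|\mathcal I_1|} = \mathbb C(q)F_{\mathcal I_1}$; write $F_{\mathcal I_1\setminus I_0}F_I = c_{I_0,I} F_{\mathcal I_1}$. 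The vanishing becomes the square linear system $\sum_{|I|=d} c_{I_0,I} v_I = 0$ indexed by $I_0$ of size $d$. I would show that the matrix $C = (c_{I_0,I})$ is invertible, forcing $v_I = 0$ for all $|I|=d$ and so $w = 0$, a contradiction. The key structural point is that by conservation of the $\Lambda$-weight, $c_{I_0,I} = 0$ unless $\sum_{(i,j)\in I}\alpha_{ij} = \sum_{(i,j)\in I_0}\alpha_{ij}$, so $C$ is block-diagonal by weight sum; within each block, iterated application of Proposition 5.4 (to evaluate the diagonal entries) together with Corollary 5.5 and Lemma 5.3 (to control the correction terms generated by Lemma 4.1(7)) should render $C$ lower-triangular with nonzero diagonal under a suitable ordering of subsets of size $d$.

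The main obstacle is that last combinatorial step: identifying the right linear order on size-$d$ subsets within each weight-block so that the off-diagonal entries of $C$ arising from Lemma 4.1(7) corrections vanish above the diagonal, and verifying that the diagonal entries $c_{I_0,I_0}$ are genuinely nonzero (roughly, $\pm q^z$ for some $z$, arising by pushing $F_{\mathcal I_1\setminus I_0}$ past $F_{I_0}$ using Proposition 5.4 and the order $\prec$). Once this triangularity is in place, invertibility of $C$ follows blockwise, completing the proof.
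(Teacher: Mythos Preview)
Your strategy---pick a minimal-weight vector $w\in V$, deduce $\mathcal N_{-1}^{+}w=0$, then argue that any such $w$ must lie in $F_{\mathcal I_1}\otimes M(\l)$---is sound in outline, but the matrix step you flag as the obstacle is a genuine gap. The block-triangularization you sketch is not obviously correct: even the diagonal entries $c_{I_0,I_0}$, i.e.\ the claim that $F_{\mathcal I_1\setminus I_0}F_{I_0}$ is a \emph{nonzero} multiple of $F_{\mathcal I_1}$, is nontrivial when $I_0$ and its complement interleave in the $\prec$ order, and the correction terms from Lemma~4.1(7) do not obviously sit below the diagonal for any ordering you have specified.

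The paper's route is different and avoids all of this. It totally orders the subsets $I\subseteq\mathcal I_1$ by declaring $I<I'$ when the first pair (with respect to $\prec$) lying outside $I\cap I'$ belongs to $I'$. Starting from an arbitrary nonzero $x=\sum_I F_I\otimes v_I\in N$, let $\bar I$ be the minimal subset occurring. If $\bar I\neq\mathcal I_1$, take the first $(s,t)\notin\bar I$; minimality forces every occurring $I$ to contain all pairs $\prec(s,t)$, so Proposition~5.4 and Corollary~5.5 give $F_{st}F_I=0$ when $(s,t)\in I$ and $F_{st}F_I=\pm q^{z}F_{I\cup\{(s,t)\}}$ when $(s,t)\notin I$. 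Hence $F_{st}x\neq 0$ and its new minimal index strictly exceeds $\bar I$. Iterating finitely many times produces an element $F_{\mathcal I_1}\otimes v\in N$.

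Your program is salvageable without the matrix: what you actually need is that the $\mathcal N_{-1}^{+}$-annihilator in the regular $\mathcal N_{-1}$-module is the line $\mathbb C(q)F_{\mathcal I_1}$, and the one-step computation above (essentially Lemma~7.11 in the root-of-unity section) proves exactly this. The minimal-weight reduction is then superfluous---the paper's iteration works from any nonzero element---though it does no harm.
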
\begin{proof} Let $I,I'$ be two nonempty subsets of $\mathcal I_1$. We define $I<I'$ if, with respect to the order in $\mathcal I_1$, the first pair $(s,t)\notin I\cap I'$ is in $I'$. Then we have by Prop. 5.4 that $F_{st}F_I=\pm q^zF_{I\cap (s,t)}$ for some $z\in\mathbb Z$ and $F_{st}F_{I'}=0$.  \par
   Let $N=N_{\0}\oplus N_{\1}$ be a nonzero submodule of $K(\l)$. Take  a nonzero element $x=\sum_{I\subseteq \mathcal I_1} F_I\otimes v_I\in N$,  $v_I\neq 0$ for all $I$. Let $\bar I$ be the minimal subset appeared in the expression. \par We proceed with induction on the order of $\bar I$. If $\bar I=\mathcal I_1$, that is, $x=F_{\mathcal I_1}\otimes v$, the lemma follows. Suppose $\bar I\neq \mathcal I_1$. Let $(s,t)\in\mathcal I_1$ be the first pair such that $(s,t)\notin \bar I$. Then by definition we have  $(i,j)\in I$ for all $(i,j)\prec (s,t)$ and all $I$ appeared above. Applying $F_{st}$ to $x$ and using Prop. 5.4,  we have $F_{st}x\neq 0$, and the minimal $I$ appeared in $F_{st}x$, denoted $\bar I'$,  satisfies $\bar I'>\bar I$. Then the induction hypothesis yields the lemma.
 \end{proof}
\begin{lemma}For any $(i,j)\in \mathcal I_0$, there is $z\in \mathbb Z$ such that  $F_{ij}F_{\mathcal I_1}=q^zF_{\mathcal I_1}F_{ij}$.
 \end{lemma}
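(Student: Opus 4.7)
Proceed by induction on the height $j-i$ of the even root $(i,j)\in\mathcal I_0$. For the inductive step ($j>i+1$), invoke the recursive definition $F_{ij}=-q_cF_{ic}F_{cj}+F_{cj}F_{ic}$ for any $i<c<j$: if the inductive hypothesis gives $F_{ic}F_{\mathcal I_1}=q^{z_1}F_{\mathcal I_1}F_{ic}$ and $F_{cj}F_{\mathcal I_1}=q^{z_2}F_{\mathcal I_1}F_{cj}$, then a short direct computation collapses both summands to $q^{z_1+z_2}F_{\mathcal I_1}F_{ij}$. It therefore suffices to handle the base case $(i,i+1)\in\mathcal I_0$, splitting into the upper sub-case $i<m$ and the lower sub-case $i\geq m+1$. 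Throughout I organize $F_{\mathcal I_1}$ by ``rows'': for $t\in[m+1,m+n]$, set $R_t:=F_{1,t}F_{2,t}\cdots F_{m,t}$; then in the new order $F_{\mathcal I_1}=R_{m+n}R_{m+n-1}\cdots R_{m+1}$.

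\emph{Upper case ($i<m$).} $F_{i,i+1}$ commutes with $F_{s,t}$ for $s\notin\{i,i+1\}$ (Lemma 4.2(1) when $s<i$, iterated (R5) when $s>i+1$). Lemma 4.1(1) gives $F_{i,i+1}F_{i,t}=q^{-1}F_{i,t}F_{i,i+1}$, and the recursive definition of $F_{i,t}$ yields $F_{i,i+1}F_{i+1,t}=q^{-1}F_{i+1,t}F_{i,i+1}-q^{-1}F_{i,t}$. Pushing $F_{i,i+1}$ through $R_t$, the defect $-q^{-1}F_{i,t}$ meets the preceding $F_{i,t}$ and is killed by $F_{i,t}^2=0$; the main term yields $F_{i,i+1}R_t=q^{-2}R_tF_{i,i+1}$. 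Multiplying across the $n$ rows, $F_{i,i+1}F_{\mathcal I_1}=q^{-2n}F_{\mathcal I_1}F_{i,i+1}$.

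\emph{Lower case ($i\geq m+1$).} Again $F_{i,i+1}$ commutes with $R_t$ for $t\notin\{i,i+1\}$; Lemma 4.1(2) gives $F_{i,i+1}R_{i+1}=q^{-m}R_{i+1}F_{i,i+1}$; and the recursion $F_{i,i+1}F_{s,i}=F_{s,i+1}+q^{-1}F_{s,i}F_{i,i+1}$ (from the definition of $F_{s,i+1}$ with $c=i$) gives
\[F_{i,i+1}R_i=q^{-m}R_iF_{i,i+1}+\sum_{s=1}^m q^{-(s-1)}T_s,\]
where $T_s:=F_{1,i}\cdots F_{s-1,i}F_{s,i+1}F_{s+1,i}\cdots F_{m,i}$. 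Combining and absorbing the outer commuting rows on the left into $F_{\leq(m,i+1)}$, one finds that $F_{i,i+1}F_{\mathcal I_1}$ equals $q^{-2m}F_{\mathcal I_1}F_{i,i+1}$ plus a defect proportional to $\sum_s q^{-(s-1)}F_{\leq(m,i+1)}T_s\cdot(R_{i-1}\cdots R_{m+1})$, so it remains to show $F_{\leq(m,i+1)}T_s=0$ for each $s$.

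\emph{Key identity and main obstacle.} The crucial observation is that in the new order $(s',t')\prec(1,i)$ iff $t'>i$, which coincides with $(s',t')\preceq(m,i+1)$; hence $F_{<(1,i)}=F_{\leq(m,i+1)}$ as ordered products. By the defining property of $F_{\leq(\cdot,\cdot)}$, this gives $F_{\leq(m,i+1)}F_{1,i}=F_{\leq(1,i)}$; iterating, $F_{\leq(m,i+1)}F_{1,i}F_{2,i}\cdots F_{s-1,i}=F_{\leq(s-1,i)}$. Since $(s,i+1)\prec(s-1,i)$ in the new order (the second index $i+1$ is larger), Lemma 5.3 applies and yields $F_{\leq(s-1,i)}F_{s,i+1}=0$, whence $F_{\leq(m,i+1)}T_s=F_{\leq(s-1,i)}F_{s,i+1}F_{s+1,i}\cdots F_{m,i}=0$. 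The only non-routine step in the whole proof is recognizing this structural identity $F_{<(1,i)}=F_{\leq(m,i+1)}$, which enables the repeated absorption and brings Lemma 5.3 to bear directly on the leftmost problematic factor $F_{s,i+1}$ of $T_s$; once this is in place the defect vanishes and one obtains $F_{i,i+1}F_{\mathcal I_1}=q^{-2m}F_{\mathcal I_1}F_{i,i+1}$.
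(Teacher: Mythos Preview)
Your proof is correct and follows the same overall strategy as the paper's: reduce to the simple-root case $j=i+1$, then push $F_{i,i+1}$ through $F_{\mathcal I_1}$ and annihilate the resulting defect terms. The differences are minor and organizational. You arrange $F_{\mathcal I_1}$ as a product of rows $R_t$, whereas the paper works directly with the linear order $\prec$, factoring $F_{\mathcal I_1}=F_{<(i+1,k)}F_{i+1,k}F_{>(i+1,k)}$ at each problematic factor. In the upper case you kill the defect via $F_{i,t}^2=0$, which is slightly more direct than the paper's appeal to Lemma~5.3 (in the form $F_{<(i+1,k)}F_{i,k}=0$). Conversely, you spell out the lower case in full---including the structural identity $F_{<(1,i)}=F_{\leq(m,i+1)}$ needed to bring Lemma~5.3 to bear---where the paper simply writes ``similarly one verifies''; note that for $s=1$ your iteration should be read as applying Lemma~5.3 directly to $F_{\leq(m,i+1)}F_{1,i+1}$, since $(1,i+1)\preceq(m,i+1)$. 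Your explicit exponents $q^{-2n}$ and $q^{-2m}$ are a small bonus not recorded in the paper.
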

 \begin{proof}Recall that $F_{ij}=-q_cF_{ic}F_{cj}+F_{cj}F_{ic}, \quad i<c<j.$ Then it suffices to consider the case $j=i+1$.\par
 By Lemma 4.1(1), (2) and Lemma 4.2(1) we have
 $$F_{i,i+1}F_{sk}=\begin{cases} q_kF_{sk}F_{k,k+1}+F_{s,k+1}, &\text{if $i=k$}\\q_{i+1}^{-1}(F_{i+1,k}F_{i,i+1}-F_{ik}),&\text{if $s=i+1$}\\q^{ z}F_{sk}F_{i,i+1}, &\text{ otherwise,}\end{cases}$$ for some $z\in\mathbb Z$.
 Since $(i,i+1)\in\mathcal I_0$, we have that $F_{i,i+1}$ commutes, up to multiple of $q^z,z\in\mathbb Z$, with all $F_{sk}$, $(s,k)\in\mathcal I_1$, but the case $s=i+1$ if $i<m$ and the case $i=k$ if $i>m$. \par Assume $i<m$. Then we have
 $$ \begin{aligned}
 F_{i,i+1}F_{\mathcal I_1}&=F_{i,i+1}F_{<(i+1,m+n)}F_{i+1,m+n}F_{>(i+1,m+n)}\\
 &=q^{z_1}F_{<(i+1,m+n)}(F_{i,i+1}F_{i+1,m+n})F_{>(i+1,m+n)}\\
 &=q^{z_1-1}F_{<(i+1,m+n)}(F_{i+1,m+n}F_{i,i+1}-F_{i,m+n})F_{>(i+1,m+n)}\\(\text{Using $F_{<(i+1,m+n)}F_{i,m+n}=0$})
 &=q^{z_1-1}F_{\leq(i+1,m+n)}F_{i,i+1}F_{>(i+1,m+n)}\\
 &=q^{z_2}F_{<(i+1,m+n-1)}(F_{i,i+1}F_{i+1,m+n-1})F_{>(i+1,m+n-1)}\\
 &=\cdots \\&=q^zF_{\mathcal I_1}F_{i,i+1}.\end{aligned}
 $$
Similarly one verifies that $F_{i,i+1}F_{\mathcal I_1}=q^zF_{\mathcal I_1}F_{i,i+1}$ for some $z\in\mathbb Z$,  if $i>m$. This completes the proof.
 \end{proof}
 \begin{lemma}For any $(i,j)\in\mathcal I_0$, we have $E_{ij}F_{\mathcal I_1}=F_{\mathcal I_1}E_{ij}$.
 \end{lemma}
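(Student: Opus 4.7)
My plan is to reduce the statement to the case of simple roots in $\g_{\0}$ and then show that, after applying the derivation $[E_{ij},-]$ to $F_{\mathcal I_1}$ and sliding $K$-factors to the right, each surviving term acquires two copies of the same odd generator and is killed by Lemma~5.2.

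First, I would reduce to the simple-root case $(i,j)=(i,i+1)$ with $i\neq m$. For $(i,j)\in\mathcal I_0$ with $j>i+1$, taking $c=j-1$ in the recursive definition (Remark~(1) of Sec.~3.1) gives $E_{ij}=E_{i,j-1}E_{j-1,j}-q_{j-1}^{-1}E_{j-1,j}E_{i,j-1}$. Since $i$ and $j$ lie in the same block of $[1,m]$ or $[m+1,m+n]$, both $(i,j-1)$ and $(j-1,j)$ again belong to $\mathcal I_0$, and an induction on $j-i$ handles the general case.

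Next, for a simple root $(i,i+1)\in\mathcal I_0$, I would compute $[E_{i,i+1},F_{s,t}]$ for every $(s,t)\in\mathcal I_1$ by iterating the identities in Lemmas~4.1 and~4.2 and expanding $F_{s,t}=-q_cF_{s,c}F_{c,t}+F_{c,t}F_{s,c}$ with $c=i$ or $c=i+1$ as appropriate. The expected outcome is that, when $i<m$, the only non-vanishing commutator is $[E_{i,i+1},F_{i,t}]=-qF_{i+1,t}K_iK_{i+1}^{-1}$ for $t>m$, while, when $i>m$, the only non-vanishing commutator is $[E_{i,i+1},F_{s,i+1}]=F_{s,i}K_i^{-1}K_{i+1}$ for $s\leq m$. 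The remaining commutators should vanish: those with $F_{s,t}$ supported on an interval disjoint from $\{i,i+1\}$ do so because $F_{s,t}$ is then a polynomial in generators $F_{j,j+1}$ with $j\neq i$, all of which commute with $E_{i,i+1}$ by~(R3); the partially overlapping cases follow by the same derivation trick, after the apparent quantum corrections cancel against scalars from moving $K$-factors past $F$'s.

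Then, I apply $[E_{i,i+1},-]$ to $F_{\mathcal I_1}$. The sum collapses to one term per $t>m$ when $i<m$ and one term per $s\leq m$ when $i>m$, each of the form $F_{<(s,t)}\,F_{\text{new}}\,K^{\pm1}\,F_{>(s,t)}$, where $F_{\text{new}}$ is $F_{i+1,t}$ or $F_{s,i}$. Using~(R2) to slide the $K$-factor past $F_{>(s,t)}$ produces only a scalar $q^{\gamma}$. In the first case, $(i+1,t)$ is the immediate successor of $(i,t)$ in the new order, so $F_{>(i,t)}=F_{\geq(i+1,t)}$, and Lemma~5.2 gives $F_{i+1,t}F_{\geq(i+1,t)}=0$. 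In the second case, the immediate successor of $(s,i+1)$ is $(s+1,i+1)$ if $s<m$, or $(1,i)$ if $s=m$; in both subcases $F_{s,i}$ is strictly larger in $\prec$ than this successor, so Lemma~5.2 forces $F_{s,i}F_{>(s,i+1)}=0$. Hence every term vanishes and $[E_{ij},F_{\mathcal I_1}]=0$.

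The main obstacle will be the case analysis in the second step, namely verifying that the off-diagonal commutators really are zero in the quantum setting. The delicate subcase is when the index range of $F_{s,t}$ straddles $\{i,i+1\}$; there one finds apparent $q$-corrections, and the crux is to choose $c$ in $F_{s,t}=-q_cF_{s,c}F_{c,t}+F_{c,t}F_{s,c}$ so that one half lies in a subalgebra commuting with $E_{i,i+1}$, after which the residual $q$-factors cancel precisely because sliding $K_iK_{i+1}^{-1}$ (resp.\ $K_i^{-1}K_{i+1}$) through the surviving odd factor produces exactly the needed scalar.
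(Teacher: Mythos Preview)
Your proposal is correct and follows essentially the same route as the paper: reduce to simple roots $E_{i,i+1}$ with $i\neq m$, compute $[E_{i,i+1},F_{sk}]$ for $(s,k)\in\mathcal I_1$ (nonzero only when $s=i$ or $k=i+1$), expand $[E_{i,i+1},F_{\mathcal I_1}]$ by the derivation rule, and kill each term via Lemma~5.2 since the resulting $F_{i+1,k}$ (resp.\ $F_{s,i}$) satisfies $F_{i+1,k}\succ F_{i,k}$ (resp.\ $F_{s,i}\succ F_{s,i+1}$). The only simplification you are missing is that the ``straddling'' case $s<i<i+1<k$ is handled in one stroke by Lemma~4.2(3), so no delicate cancellation of $q$-factors is needed there.
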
 \begin{proof} By the formula $E_{ij}=E_{ic}E_{cj}-q^{-1}_cE_{cj}E_{ic}$, it suffices to assume $j=i+1$. Recall the (even) derivation $[E_{i,i+1},-]$ of $U_q$. \par Using the definition of $U_q$ and Lemma 4.1(1), (2) we have, for any $(s,k)\in\mathcal I_1$, $$[E_{i,i+1}, F_{sk}]=\begin{cases}-F_{i+1,k}K_iK^{-1}_{i+1}q_{i+1},&\text{if $i=s$}\\ F_{si}K_i^{-1}K_{i+1}, & \text{if $i+1=k$}\\0,&\text{otherwise.}\end{cases}$$  Then we have $$\begin{aligned}
 &[E_{i,i+1}, F_{\mathcal I_1}]\\&=\sum _{(s,k)\in\mathcal I_1}F_{<(s,k)} [E_{i,i+1}, F_{sk}]F_{>(s,k)}\\
 &=\begin{cases}\sum_{s=i}F_{<(s,k)}(-F_{i+1,k}K_iK^{-1}_{i+1}q_{i+1})F_{>(s,k)}, &\text{if $i<m$}\\\sum_{k=i+1}F_{<(s,k)}(F_{si}K_i^{-1}K_{i+1})F_{>(s,k)},&\text{if $i>m$}\end{cases}\\&=0.\end{aligned}$$
 where the last equality is given by the fact that $F_{i+1,k}\succ F_{s,k}$ if $s=i$ and $F_{si}\succ F_{s,k}$ if $k=i+1$.
  Then the lemma follows.
 \end{proof}
 Let $E_{\mathcal I_1}=\Omega (F_{\mathcal I_1})$. Using the triangular decomposition $U_q=U_q^-\otimes U^0\otimes U^+_q$ we have $$E_{\mathcal I_1}F_{\mathcal I_1}=f(K)+\sum u^-_iu^0_iu^+_i, u_i^{\pm}\in U_q^{\pm}, f(K), u^0_i\in U^0.$$ Note that $U_q$ is a $U^0$-module under the conjugation: $$K_i\cdot u=K_iuK_{i}^{-1}, 1\leq i\leq m+n.$$ Since the $U^0$-weight of $E_{\mathcal I_1}F_{\mathcal I_1}$ is zero, we get $u_i^+=0$ if and only if $u^-_i=0$. \par Let $v_{\l}$ be a maximal vector in $M(\l)\subseteq K(\l)$. Then we get $$E_{\mathcal I_1}F_{\mathcal I_1}v_{\l}=f(K)v_{\l}=f(K)(\l)v_{\l}, f(K)(\l)\in C(q).$$ As $\l\in X(U^0)$ varies, one obtains a function $f(K)(\l)$. We denote it by $f_{m,n}(\l)$.
 \begin{proposition} The $U_q$-module $K(\l)$ is simple if and only if $f_{m,n}(\l)\neq 0$.
 \end{proposition}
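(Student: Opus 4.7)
The plan is to use Lemma 5.5 to reduce simplicity of $K(\l)$ to whether the cyclic submodule $N':=U_q\cdot(F_{\mathcal I_1}\otimes v_\l)$ contains $v_\l$, and then to identify its weight-$\l$ component with $\mathbb C(q)\,f_{m,n}(\l)\,v_\l$ by a weight-balance argument.

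First I would note that $K(\l)=\mathcal N_{-1}\otimes M(\l)$ has one-dimensional $\l$-weight space $\mathbb C(q)v_\l$: the basis $\{F_I\}$ of $\mathcal N_{-1}$ is weight-homogeneous of strictly negative weight for $I\neq\phi$, and the $\l$-weight space of the simple highest-weight $U_q(\g_{\0})$-module $M(\l)=U_q^-(\g_{\0})v_\l$ is spanned by $v_\l$. Since $v_\l$ generates $K(\l)$ over $U_q$, simplicity of $K(\l)$ is equivalent to every nonzero submodule of $K(\l)$ containing $v_\l$. By Lemma 5.5, every nonzero submodule $N$ contains $F_{\mathcal I_1}\otimes v$ for some $0\neq v\in M(\l)$. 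Lemmas 5.6, 5.7 and relation (R2) imply that each generator of $U_q(\g_{\0})$ commutes with $F_{\mathcal I_1}$ up to a nonzero scalar $q^z$, so there is an algebra automorphism $\sigma$ of $U_q(\g_{\0})$ with $u_0F_{\mathcal I_1}=F_{\mathcal I_1}\sigma(u_0)$ for every $u_0\in U_q(\g_{\0})$. Since $M(\l)$ is $U_q(\g_{\0})$-simple and $\sigma$ is bijective, one can select $u_0$ with $\sigma(u_0)v=v_\l$, which gives $F_{\mathcal I_1}\otimes v_\l\in N$. So it suffices to determine when $v_\l\in N'$, equivalently when $(N')_\l\neq 0$.

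The main step is to compute $(N')_\l$. Using the triangular decomposition $U_q=\mathcal N_1\otimes U_q(\g_{\0})\otimes\mathcal N_{-1}$, any $u\in U_q$ of weight $\sum_{(i,j)\in\mathcal I_1}(\e_i-\e_j)$ is a sum of weight-homogeneous monomials $E_Ju_0F_{I'}$; Corollary 5.5 gives $F_{I'}F_{\mathcal I_1}=0$ whenever $I'\neq\phi$, so only the $I'=\phi$ terms act nontrivially on $F_{\mathcal I_1}\otimes v_\l$. For these, extracting the $\e_i$-coefficient for each $i\leq m$ from $\mathrm{wt}(E_J)+\mathrm{wt}(u_0)=\sum_{(i,j)\in\mathcal I_1}(\e_i-\e_j)$ and summing over $i\leq m$ forces $|J|=mn$ (because weights of $U_q(\g_{\0})$ lie in the direct sum of the two $A$-type sublattices, each of block sum zero); this forces $J=\mathcal I_1$ and $\mathrm{wt}(u_0)=0$. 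Then $u_0F_{\mathcal I_1}=F_{\mathcal I_1}\sigma(u_0)$, with $\sigma(u_0)v_\l$ again of weight $\l$ and hence in $\mathbb C(q)v_\l$, so every such monomial acts on $F_{\mathcal I_1}\otimes v_\l$ as a scalar multiple of $E_{\mathcal I_1}F_{\mathcal I_1}v_\l=f_{m,n}(\l)v_\l$. Therefore $(N')_\l=\mathbb C(q)\,f_{m,n}(\l)\,v_\l$, which is nonzero exactly when $f_{m,n}(\l)\neq 0$, completing the proof.

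The step I expect to be most delicate is the weight-balance argument ruling out $J\subsetneq\mathcal I_1$; it rests on the fact that the root lattice of $\g_{\0}$ splits into two $A$-type blocks with block-wise zero weight sum, so that a nonnegative integer combination of distinct odd positive roots (each having first-block coordinate sum $+1$) can equal $\sum_{(i,j)\in\mathcal I_1}(\e_i-\e_j)$ (whose first-block sum is $mn$) only when every odd positive root appears exactly once.
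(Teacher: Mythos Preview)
Your argument is correct (with a numbering slip: what you cite as Lemma~5.5 is the paper's Lemma~5.6, and your Lemmas~5.6,~5.7 are the paper's Lemmas~5.7,~5.8). The ``if'' direction is essentially the paper's: both push $F_{\mathcal I_1}\otimes v$ up to $F_{\mathcal I_1}\otimes v_\l$ via the commutation lemmas and then apply $E_{\mathcal I_1}$. (The paper uses only Lemma~5.8 together with a chain of $E_{\a_i}$'s rather than your full automorphism $\sigma$, but this is cosmetic; your $\sigma$ is indeed well-defined since left multiplication by $F_{\mathcal I_1}$ is injective on $U_q(\g_{\0})$ by the tensor decomposition.)

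For the ``only if'' direction the two approaches genuinely diverge. The paper observes that $F_{\mathcal I_1}\otimes M(\l)$ is a simple $\mathcal N_{-1}U_q(\g_{\0})$-module annihilated by $\mathcal N_{-1}^+$, so when $K(\l)$ is simple one gets $K(\l)=\mathcal N_1\cdot(F_{\mathcal I_1}\otimes M(\l))$; a dimension comparison ($\dim\mathcal N_1=\dim\mathcal N_{-1}$) then shows the elements $E_IF_{\mathcal I_1}\otimes v_i$ form a basis, whence $E_{\mathcal I_1}F_{\mathcal I_1}v_\l\neq 0$. Your route instead computes $(N')_\l$ directly via the block-sum weight argument, which pins $J=\mathcal I_1$ and handles both implications in one stroke. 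Your method is more self-contained and elementary; the paper's has the structural payoff of exhibiting the ``opposite induction'' picture $K(\l)\cong\mathcal N_1\otimes(F_{\mathcal I_1}\otimes M(\l))$, which it reuses later in the Morita-equivalence argument (Theorem~7.13).
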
\begin{proof}Assume $f_{m,n}(\l)\neq 0$. Let $N=N_{\0}\oplus N_{\1}$ be a nonzero submodule of $K(\l)$. By Lemma 5.6, we have $F_{\mathcal I_1}\otimes v\in N$ for some $0\neq v\in M(\l)$. Since $K_iF_{\mathcal I_1}=q^aF_{\mathcal I_1}K_i$ for some $a_i\in \mathbb Z$, we may assume $v$ is a weight vector. Since $M(\l)$ contains a unique (up to scalar multiple) maximal vector $v_{\l}$, there is a sequence of elements $E_{\a_{i_1}}, \cdots, E_{\a_{i_s}}\in U_q(\g_{\0})$ such that $$E_{\a_{i_1}}\cdots E_{\a_{i_s}}v=v_{\l}.$$  Then Lemma 5.8 implies that  $F_{\mathcal I_1}\otimes v_{\l}\in N$,  and hence $E_{\mathcal I_1}F_{\mathcal I_1}\otimes v_{\l}=f_{m,n}(\l)\otimes v_{\l}\in N$. It follows that $v_{\l}\in N$ and hence $N=K(\l)$, so that $K(\l)$ is simple.\par Suppose $K(\l)$ is simple. By Lemma 5.7, 5.8,   the subspace $F_{\mathcal I_1}\otimes M(\l)\subseteq K(\l)$ is a $U_q(\g_{\0})$-submodule, and hence simple. Note that Coro.5.5 says that $\mathcal N_{-1}^+ F_{\mathcal I_1}\otimes M(\l)=0$, so that $F_{\mathcal I_1}\otimes M(\l)$ is a simple $\mathcal N_{-1}U_q(\g_{\0})$-module annihilated by $\mathcal N_{-1}^+U_q(\g_{\0})$. Since $K(\l)$ is simple, we have $$K(\l)=\mathcal N_1U_q(\g_{\0})\mathcal N_{-1}F_{\mathcal I_1}\otimes M(\l)=\mathcal N_1F_{\mathcal I_1}\otimes M_0(\l).$$ Since $\text{dim}\mathcal N_{-1}=\text{dim}\mathcal N_1$, we have that $K(\l)$ has a basis $$E_IF_{\mathcal I_1}\otimes v_i, I\subseteq \mathcal I_1, i=1,\dots, s,$$ with $v_1,\dots, v_s$  a basis of $M(\l)$. We can choose $v_1=v_{\l}$. Then we get $$0\neq F_{\mathcal I_1}F_{\mathcal I_1}v_{\l}=f_{m,n}(\l)v_{\l},$$ so that $f_{m,n}(\l)\neq 0$.
 \end{proof}
   \section{The polynomial $f_{m,n}(\l)$}
   This section is devoted to the determination of the polynomial $f_{m,n}(\l)$, for $\l\in X(U^0)$. Let us note that R. Zhang defined in \cite{zh} a polynomial using a different order of the product $\Pi_{(i,j)\in\mathcal I_1}F_{ij}$.   \par

 \begin{lemma} For  $1\leq i\leq m$, we have $E_{i,m+n}F_{>(i,m+n)}v_{\l}=0.$
 \end{lemma}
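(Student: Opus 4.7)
The plan is to push $E_{i,m+n}$ across $F_{>(i,m+n)}$ using its super-derivation action, and then annihilate everything against the maximal vector $v_\l$. With respect to the new order on $\mathcal I_1$, the pairs $\succ(i,m+n)$ are exactly $(s,m+n)$ for $i<s\le m$, listed in increasing $s$, so
\[ F_{>(i,m+n)}=F_{i+1,m+n}F_{i+2,m+n}\cdots F_{m,m+n}. \]
Because $E_{i,m+n}\in\mathcal N_1^+$ acts as $0$ on $M(\l)$ (hence on $v_\l$), the super-Leibniz identity $EF_1\cdots F_k=[E,F_1\cdots F_k]+(-1)^{\bar E\sum\bar F_l}F_1\cdots F_k E$ collapses (each $F_l$ is odd and so is $E_{i,m+n}$) to
\[ E_{i,m+n}F_{>(i,m+n)}v_\l=[E_{i,m+n},F_{>(i,m+n)}]v_\l. \]

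Applying the odd super-derivation $[E_{i,m+n},-]$ termwise and using Lemma 4.1(6) (with $c=i$, $j=m+n$), which yields $[E_{i,m+n},F_{s,m+n}]=q_s^{-1}E_{is}K_sK_{m+n}^{-1}$ for each $i<s\le m$, one obtains
\[ [E_{i,m+n},F_{>(i,m+n)}]v_\l=\sum_{s=i+1}^{m}(-1)^{s-i-1}q_s^{-1}F_{i+1,m+n}\cdots F_{s-1,m+n}\,E_{is}K_sK_{m+n}^{-1}\,F_{s+1,m+n}\cdots F_{m,m+n}v_\l. \]
In each summand I commute $K_sK_{m+n}^{-1}$ past the remaining $F_{t,m+n}$ ($s<t\le m$) to the right via (R2) and Remark (1); each passage contributes only a power of $q$, and the $K$'s finally evaluate on $v_\l$ to the scalar $\l(K_sK_{m+n}^{-1})$. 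The task therefore reduces to showing $E_{is}F_{s+1,m+n}\cdots F_{m,m+n}v_\l=0$ for each $s$.

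The main obstacle is the ``disjoint-support'' commutativity
\[ [E_{is},F_{t,m+n}]=0\qquad\text{for }i<s<t\le m, \]
which is not recorded in Sec. 4. I would prove it by double induction: outer on $s-i$, using $E_{is}=E_{i,s-1}E_{s-1,s}-q_{s-1}^{-1}E_{s-1,s}E_{i,s-1}$ (Remark (1) with $c=s-1$) to reduce to the base $s=i+1$, $E_{is}=E_{\a_i}$; inner on $m+n-t$, using $F_{t,m+n}=-q_{t+1}F_{\a_t}F_{t+1,m+n}+F_{t+1,m+n}F_{\a_t}$ (Remark (1) with $c=t+1$) to reduce in turn to $[E_{\a_i},F_{\a_k}]=0$ for $i\ne k$, which is exactly relation (R3). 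The induction works because the simple-root indices appearing in $E_{is}$ lie in $[i,s-1]$ and those in $F_{t,m+n}$ lie in $[t,m+n-1]$, and $s<t$ keeps these ranges disjoint. Once this is in hand, $E_{is}$ commutes freely through the remaining $F_{t,m+n}$ ($t>s$) on its right to reach $v_\l$; since $(i,s)\in\mathcal I_0$ puts $E_{is}$ in the positive part of $U_q(\g_{\0})$ and $v_\l$ is a maximal vector, $E_{is}v_\l=0$, so every summand vanishes and $E_{i,m+n}F_{>(i,m+n)}v_\l=0$.
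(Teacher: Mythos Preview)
Your argument rests on a misreading of the order on $\mathcal I_1$. By the definition in Section~5, $(s,t)\succ(i,m+n)$ means $t<m+n$, \emph{or} $t=m+n$ and $s>i$. Hence $>(i,m+n)$ contains every pair $(s,t)\in\mathcal I_1$ with $t<m+n$, not only the pairs $(s,m+n)$ with $i<s\le m$; the correct product is
\[
F_{>(i,m+n)}=F_{i+1,m+n}\cdots F_{m,m+n}\;F_{1,m+n-1}\cdots F_{m,m+n-1}\;\cdots\;F_{1,m+1}\cdots F_{m,m+1},
\]
and what you wrote is only its leading segment. Your proof is therefore valid only when $n=1$. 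The same misreading recurs when you say that the factors to the right of $E_{is}$ are just the $F_{t,m+n}$ with $t>s$; in fact one must also push $E_{is}$ past all $F_{k,j}$ with $j<m+n$, and for $i\le k<s$ these do \emph{not} commute with $E_{is}$.

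The missing terms are not harmless. For $(s,t)\succ(i,m+n)$ with $t<m+n$ one has, by Lemma~4.1(4) and Lemma~4.3(b),
\[
[E_{i,m+n},F_{s,t}]=
\begin{cases}
E_{t,m+n}K_i^{-1}K_t,& s=i,\\
(q_t-q_t^{-1})F_{si}E_{t,m+n}K_i^{-1}K_t,& s<i,\\
0,& s>i,
\end{cases}
\]
and the first two are nonzero. The paper kills these extra contributions by noting that the resulting $E_{t,m+n}$ commutes with every later factor (all later factors have second index $\le t$, so the simple-root supports are disjoint) and hence reaches and annihilates $v_\l$. Likewise, in the $E_{is}$ step the paper computes $[E_{is},F_{k,j}]$ for $i\le k<s$ and $m<j<m+n$ via Lemma~4.1(3) and Lemma~4.3(a); each such term carries a factor $F_{s,j}$ with $(s,j)\succ(k,j)$, which is then killed by Lemma~5.2. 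Your disjoint-support commutation lemma is correct and is used implicitly throughout, but it does not cover these overlapping cases, and without them the argument is incomplete.
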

 \begin{proof}Using the formulas from Lemma 4.1, 4.3,  we have, for any $(s,t)\succ (i,m+n)$, $$[E_{i,m+n}, F_{st}]=\begin{cases} E_{t,m+n}K_s^{-1}K_t, &\text{if $s=i$, $t<m+n$}\\E_{is}K_sK_{m+n}^{-1}q_s^{-1},&\text{if $s>i$, $t=m+n$}\\(q_t-q_t^{-1})F_{si}E_{t,m+n}K_i^{-1}K_t,&\text{if $s<i<t<m+n$}\\0, &\text{otherwise.}\end{cases}$$
 Then we have  $$\begin{aligned}
 E_{i,m+n}F_{> (i,m+n)}v_{\l}&=[E_{(i,m+n)}, F_{> (i,m+n)}]v_{\l}\\&=\sum_{F_{st}\succ F_{i,m+n}}(-1)^{\a_{st}}F_{((i,m+n),(s,t))} [E_{i,m+n}, F_{st}]F_{> (s,t)}v_{\l}\\&=\sum_{s>i, t=m+n}(-1)^{\a_{st}} F_{((i,m+n),(s,t))} (E_{is}K_sK_{m+n}^{-1}q_s^{-1})F_{>(s,m+n)}v_{\l}\\&+\sum_{s=i,t<m+n}(-1)^{\a_{st}} F_{((i,m+n),(s,t))}( E_{t,m+n}K_s^{-1}K_t)F_{>(s,t)}v_{\l}\\&+\sum_{s<i<t<m+n}(-1)^{\a_{st}} F_{((i,m+n),(s,t))}( (q_t-q_t^{-1})F_{si} E_{t,m+n}K_i^{-1}K_t)F_{>(s,t)}v_{\l},\end{aligned}$$ where $\a_{st}\in \mathbb Z_2$. Note that the second and the third summation are equal to zero, since $E_{t,m+n}$ commutes with all $F_{ij}$($(i,j)\in\mathcal I_1$) with $F_{ij}\succ F_{st}$. \par
  We claim that the first summation is also equal to zero. In fact, we have, in the case where $s>i, t=m+n$,   $$\begin{aligned}
 E_{is}F_{>(s,m+n)}v_{\l}& =[E_{is}, F_{>(s,m+n)}]v_{\l}\\&=\sum ^{m+1}_{j=m+n-1}\sum_{k=i}^{s-1}F_{((s,m+n),(k,j))} [E_{is}, F_{kj}]F_{>(k,j)}v_{\l}.\end{aligned}$$ For $k=i, m+1\leq j\leq m+n-1$, we have by Lemma 4.1(3) that $$[E_{is}, F_{kj}]F_{>(k,j)}v_{\l}=q_sF_{sj}(K_iK_s^{-1})F_{>(k,j)}v_{\l}=0,$$ where the last equality is given by the fact that $(s,j)\succ (k,j)$.\par
  For $i<k\leq s-1$, we have by using Lemma 4.3(a) that $$[E_{is}, F_{kj}]F_{>(k,j)}v_{\l}=(q_s^{-1}-q_s)(K_kK_s^{-1})E_{ik}F_{sj}F_{>(k,j)}v_{\l}=0,$$ where the last equality follows from the fact that $(s,j)\succ (k,j)$. Thus, the claim follows.\par
 \end{proof}
  For $(i,j)\in\mathcal I$, let $K_{ij}=K_iK_j^{-1}$.   Let us denote $$[(\l+\rho)(K_{ij})]=\frac{(\l+\rho)(K_{ij})-(\l+\rho)(K_{ij}^{-1})}{q-q^{-1}}.$$ Then we see that $[(\l+\rho)(K_{ij})]=[(\l+\rho,\e_i-\e_j)]$ if $\l$ is integral.
\begin{theorem}Let $\l\in X(U^0)$. Then $f_{m,n}(\l)=\Pi_{(i,j)\in\mathcal I_1}[(\l+\rho)(K_{ij})].$ In particular, $f_{m,n}(\l)=\Pi_{(i,j)\in\mathcal I_1}[(\l+\rho,\e_i-\e_j)]$ if $\l$ is integral.
\end{theorem}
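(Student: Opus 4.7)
The plan is to evaluate $E_{\mathcal I_1} F_{\mathcal I_1} v_\l$ recursively by peeling off one rightmost factor of $E_{\mathcal I_1}$ together with the matching leftmost factor of $F_{\mathcal I_1}$ at a time, using Lemma 6.1 to kill all cross terms, and then to identify the accumulated $K$-factors with the desired product $\prod_{(i,j)\in\mathcal I_1}[(\l+\rho)(K_{ij})]$.

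First I would use the fact that $v_\l$ is a maximal vector of $M(\l)\subseteq K(\l)$ and that $\mathcal N_1^+$ annihilates $M(\l)$, so $E_{ij}v_\l=0$ for every $(i,j)\in\mathcal I_0\cup\mathcal I_1$. Writing $F_{\mathcal I_1}=F_{(1,m+n)}F_{>(1,m+n)}$ and expanding $E_{(1,m+n)}F_{(1,m+n)}$ as a graded commutator $[E_{(1,m+n)},F_{(1,m+n)}]-F_{(1,m+n)}E_{(1,m+n)}$, the second summand applied to $F_{>(1,m+n)}v_\l$ vanishes by Lemma 6.1. This yields
$$E_{(1,m+n)}F_{\mathcal I_1}v_\l=\frac{K_1K_{m+n}^{-1}-K_1^{-1}K_{m+n}}{q-q^{-1}}F_{>(1,m+n)}v_\l.$$
Commuting this $K$-factor to the right through the remaining factors of $E_{\mathcal I_1}$ via (R2) produces explicit powers of $q$, and the next rightmost factor $E_{(2,m+n)}$ can then be applied to $F_{>(1,m+n)}v_\l$, whose leading factor is $F_{(2,m+n)}$. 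The same use of Lemma 6.1 produces the factor $(K_2K_{m+n}^{-1}-K_2^{-1}K_{m+n})/(q-q^{-1})$ and further reduces the product.

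Next I would iterate this procedure in the order $\prec$ on $\mathcal I_1$, peeling successively $(1,m+n),(2,m+n),\dots,(m,m+n),(1,m+n-1),\dots,(m,m+1)$. For the columns $j<m+n$ one needs analogues of Lemma 6.1 asserting $E_{i,j}F_{>(i,j)}v_\l=0$; these should be proved by exactly the argument of Lemma 6.1, expanding the graded commutator via Lemmas 4.1 and 4.3 and using the already established vanishings together with the fact that each cross-term still contains some $F_{kl}$ with $(k,l)\succ(s,t)\succ(i,j)$ to the right of the surviving $E$. After all $mn$ peeling steps one is left only with $K$-factors acting on $v_\l$; each $K_iK_j^{-1}$ yields $q^{(\l,\e_i-\e_j)}$, while the $q$-powers collected during the (R2) commutations encode an additional $q^{(\rho,\e_i-\e_j)}$, producing the factor $[(\l+\rho)(K_{ij})]$ for each $(i,j)$.

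\paragraph{Main obstacle.} The mechanical content is routine: the peelings and the vanishings needed to clear cross-terms are all variants of Lemma 6.1. The real difficulty lies in the bookkeeping of the accumulated $q$-exponents: for each $(i,j)\in\mathcal I_1$ one must verify that the total $q$-power picked up when the generated $K$-factor $K_iK_j^{\pm 1}$ commutes past all surviving $F_{st}$ and $E_{st}$ between the moment of peeling $(i,j)$ and its eventual action on $v_\l$ equals exactly $(\rho,\e_i-\e_j)$. Establishing this requires counting, for each fixed $(i,j)$ and at the correct stage of the recursion, the signed multiplicities of $\e_i$ and $\e_j$ in the weights of the remaining factors, and matching the result against the explicit formula $\rho=\rho_0-\rho_1$. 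Once this combinatorial identity is checked, the theorem follows, and the integral case is a direct rewriting using $\l(K^\mu)=q^{(\l,\mu)}$.
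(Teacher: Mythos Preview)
Your proposal is correct and follows the same underlying idea as the paper's proof: peel off one $E_{ij}F_{ij}$ pair at a time in the order $\prec$, using Lemma~6.1 (and its analogues for $j<m+n$) to kill the cross terms, then identify the accumulated $K$-eigenvalues with the product $\prod_{(i,j)}[(\l+\rho)(K_{ij})]$.

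The paper, however, packages the iteration differently. After peeling only the column $j=m+n$ it observes that the remaining expression $E_{\geq(1,m+n-1)}F_{\geq(1,m+n-1)}v_\l$ is precisely the analogous object for $gl(m,n-1)$, and proceeds by induction on $n$. This buys two things. First, the ``analogues of Lemma~6.1'' that you propose to prove directly for each $j<m+n$ are, under this inductive setup, simply Lemma~6.1 itself applied to the smaller superalgebras, so no separate argument is needed. Second, the $q$-bookkeeping you identify as the main obstacle is reduced to two short identities, $(\alpha_i,\e_i-\e_{m+n})=(\rho,\e_i-\e_{m+n})$ and $(\rho_{m,n-1},\e_i-\e_j)=(\rho,\e_i-\e_j)$ for $j\leq m+n-1$, which the paper imports from \cite{z2} rather than verifying by a global count. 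Your direct iteration would also succeed, but the inductive repackaging is what keeps the $\rho$-combinatorics from becoming a long case check.

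One small correction to your description: the $K$-factor produced at each peeling step need not be commuted past the remaining $E$-factors. It sits immediately to the left of the weight vector $F_{>(i,j)}v_\l$, acts on it by the scalar $(\l+\alpha_{ij})(K_{ij})$ (where $\l+\alpha_{ij}$ is the weight of $F_{>(i,j)}v_\l$), and that scalar is then pulled out front. The $\rho$-shift is recovered by computing $\alpha_{ij}$ explicitly, not by tracking (R2)-commutations through $E$'s.
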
\begin{proof} Using the formula (2) in Sec. 4, we have $$\begin{aligned}
E_{\mathcal I_1}F_{\mathcal I_1}v_{\l}&=E_{>(1,m+n)}(E_{1,m+n}F_{1,m+n})F_{>(1,m+n)}v_{\l}\\&=E_{> (1,m+n)}(\frac{K_{1,m+n}-K_{1,m+n}^{-1}}{q-q^{-1}})F_{>(1,m+n)}v_{\l}\\&-E_{> (1,m+n)}F_{1,m+n}E_{1,m+n}F_{> (1,m+n)}v_{\l}\\
 (\text{Using Lemma 6.1})&=E_{>(1,m+n)}\frac{K_{1,m+n}-K^{-1}_{1,m+n}}{q-q^{-1}}F_{> (1,m+n)}v_{\l}\\&=[(\l+\a_1)(K_{1,m+n})]E_{>(1,m+n)}F_{>(1,m+n)}v_{\l},\end{aligned}$$ where $\l+\a_1$ is the weight of $F_{> (1,m+n)}v_{\l}$.\par  Next we compute $E_{>(1,,m+n}F_{>(1,m+n)}v_{\l}$ in a similar way.  Continue the process,   we  get $$\begin{aligned}
 E_{\mathcal I_1}F_{\mathcal I_1}v_{\l}&=[(\l+\a_1)(K_{1,m+n})]E_{>(1,m+n)}F_{>(1,m+n)}v_{\l}\\&=[(\l+\a_1)(K_{1,m+n})][(\l+\a_2)(K_{2,m+n})]E_{>(2,m+n)}F_{>(2,m+n)}v_{\l}\\&=\cdots
 \\&=\Pi^m_{i=1}[(\l+\a_i)(K_{i,m+n})] E_{\geq(1,m+n-1)}F_{\geq (1,m+n-1)}v_{\l}, \end{aligned}$$ where $\l+\a_i$ is the weight of $F_{>(i,m+n)}v_{\l}$, $1\leq i\leq m$.   It is easily seen that $$\l+\a_i=\l-2\rho_1+\sum^i_{k=1}(\e_k-\e_{m+n}).$$  By the proof of \cite[Th.4]{z2}, we have $$(\a_i,\e_i-\e_{m+n})=(\rho, \e_i-\e_{m+n}),$$ so that $$(\l+\a_i)(K_{i,m+n})=\l(K_{i,m+n})q^{(\rho,\e_i-\e_{m+n})}=(\l+\rho)(K_{i,m+n})$$ for any $i\leq m$, which gives $$f_{m,n}(\l)=\Pi^m_{k=1}[(\l+\rho)(K_{k,m+n})] E_{\geq(1,m+n-1)}F_{\geq (1,m+n-1)}v_{\l}.$$\par
  We now prove the proposition by induction on $n$. The case $n=1$ follows immediately from the equation above. Assume the proposition for $n-1$. To proceed, let us  denote by $\rho_{m,n-1}$ the $\rho$ for Lie superalgebra $gl(m,n-1)$. By the proof of \cite[Th.4]{z2}, we have   $$(\rho_{m,n-1},\e_i-\e_j)=(\rho,\e_i-\e_j)$$ for $i<m<j\leq m+n-1$.  Applying the induction hypothesis,  we have $$\begin{aligned}
  &f_{m,n}(\l)\\&=\Pi^m_{k=1}[(\l+\rho)(K_{k, m+n})] f_{m,n-1}(\l)\\&=\Pi^m_{k=1}[(\l+\rho)(K_{k, m+n})] \Pi_{i<m<j\leq m+n-1}[(\l+\rho_{m,n-1})(K_{ij})]\\
  &=\Pi^m_{k=1}[(\l+\rho)(K_{k, m+n})] \Pi_{i<m<j\leq m+n-1}\frac{\l(K_{ij})\rho_{m,n-1}(K_{ij})-\l(K_{ij}^{-1})\rho_{m,n-1}(K_{ij}^{-1})}{q-q^{-1}}\\&=
  \Pi^m_{k=1}[(\l+\rho)(K_{k, m+n})] \Pi_{i<m<j\leq m+n-1}\frac{\l(K_{ij})q^{(\rho_{m,n-1}, \e_i-\e_j)}-\l(K_{ij}^{-1})q^{-(\rho_{m,n-1},\e_i-\e_j)}}{q-q^{-1}}\\
  &=\Pi_{(i,j)\in\mathcal I_1}[(\l+\rho)(K_{ij})].\end{aligned}$$
 \end{proof}
\section{Representations of $U_q$ at roots of unity}

\subsection{Simple $U_{\eta}$-modules}
Let $l$ be an odd number $\geq 3$ and let $\eta$ be a primitive $l$th root of unity. For $1\leq i\leq m$, let $\eta_i=\begin{cases} \eta,&\text{if $i\leq m$}\\ \eta^{-1},&\text{if $i>m$.}\end{cases}$ Set $$\mathcal A'=\{f(q)/g(q)|, f(q), g(q)\in\mathcal A, g(\eta)\neq 0\}.$$
 Let $U_{\mathcal A'}$ be the $\mathcal A'$-subalgebra of $U_q$ generated by the elements $$E_{i,i+1}, F_{i,i+1}, K_{_j}^{\pm 1},  i\in [1,m+n), j\in [1, m+n].$$

For $\psi=(\psi_{ij})\in \mathbb N^{\mathcal I_0}$, let $E^{\psi}_0$ denote the product $\Pi_{(i,j)\in\mathcal I_0}E_{ij}^{\psi_{ij}}$ in the order given in Sec.5 and let $F_0^{\psi}=\Omega (E_0^{\psi})$. Recall the notion $E_I$, $F_I$, $I\subseteq \mathcal I_1$.  Then by Lemma 5.1 and the PBW theorem of $U_q$(see \cite{z1}) we have
\begin{corollary} The $\A'$-superalgebra $U_{\A'}$ has an $\A'$-basis $$F_IF_0^{\psi}K^{\mu}E_0^{\psi'}E_{I'}, I, I'\subseteq {\mathcal I_1}, \psi,\psi'\in\mathbb N^{\mathcal I_0},\mu\in\Lambda.$$
\end{corollary}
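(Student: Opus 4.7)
The plan is to reduce the statement to the PBW theorem for $U_q$ over $\mathbb{C}(q)$ together with Lemma 5.1, and then to verify that every straightening identity used in the reduction has coefficients in $\A'$.

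First, by Lemma 5.1 and the PBW theorem for $U_q$ cited in \cite{z1}, the subspaces $\mathcal{N}_{-1}$, $U_q(\g_{\0})$, $\mathcal{N}_1$ admit $\mathbb{C}(q)$-bases $\{F_I\}_{I\subseteq\mathcal{I}_1}$, $\{F_0^{\psi}K^{\mu}E_0^{\psi'}\}_{\psi,\psi'\in\mathbb{N}^{\mathcal{I}_0},\mu\in\Lambda}$, and $\{E_{I'}\}_{I'\subseteq\mathcal{I}_1}$ respectively. Combined with the triangular decomposition $U_q\cong\mathcal{N}_{-1}\otimes U_q(\g_{\0})\otimes\mathcal{N}_1$ stated in Section 5, the listed products $F_IF_0^{\psi}K^{\mu}E_0^{\psi'}E_{I'}$ form a $\mathbb{C}(q)$-basis of $U_q$. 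This immediately gives their linear independence over $\A'$.

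Next I would show that the $\A'$-linear span $M$ of these monomials equals $U_{\A'}$. Since $1\in M$, it suffices to prove that $M$ is closed under left multiplication by each algebra generator $E_{i,i+1}$, $F_{i,i+1}$, $K_j^{\pm 1}$; once this is known, $M$ is an $\A'$-subalgebra of $U_q$ containing all generators of $U_{\A'}$, forcing $U_{\A'}\subseteq M$, while the reverse inclusion is clear from the definition of $U_{\A'}$. Closure under left multiplication is proved by running the standard straightening algorithm: any product $g\cdot F_IF_0^{\psi}K^{\mu}E_0^{\psi'}E_{I'}$ is reordered by commuting $g$ past each factor using the identities (R1)--(R8), Lemmas 4.1, 4.2, 4.3, the identity $[E_{ij},F_{ij}]=(K_iK_j^{-1}-K_i^{-1}K_j)/(q_i-q_i^{-1})$ from Section 4, together with the internal reorderings within $\mathcal{N}_{\pm 1}$ handled in the proof of Lemma 5.1, and the PBW straightening within $U_q(\g_{\0})$.

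The main obstacle is purely bookkeeping: one must verify that every coefficient produced in these rewritings lies in $\A'$. All coefficients from the $F_{ij}F_{st}$ reorderings (Lemma 4.1(1),(2),(7), Lemma 4.2) and the odd-odd crossing formulas in Lemma 4.3 are already in $\mathbb{Z}[q,q^{-1}]\subseteq\A'$. The only division appearing is by $q_i-q_i^{-1}=\pm(q-q^{-1})$, which arises from identity (2) of Section 4 and the analogous formula in $U_q(\g_{\0})$; since $l$ is an odd integer $\geq 3$, one has $\eta^2\neq 1$, hence $q-q^{-1}$ does not vanish at $q=\eta$ and is therefore a unit in $\A'$. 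Thus every straightening step preserves $M$, and induction on the total length of the word in the generators completes the proof.
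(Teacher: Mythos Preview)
Your proposal is correct and follows exactly the route the paper indicates: the paper simply records the corollary as an immediate consequence of Lemma~5.1 together with the PBW theorem for $U_q$ from \cite{z1}, without spelling out any further details. Your write-up supplies the elaboration the paper omits---linear independence via the $\mathbb{C}(q)$-basis, spanning via closure under left multiplication by generators, and the check that $(q-q^{-1})^{-1}\in\mathcal A'$ because $\eta^2\neq 1$---so the approach is the same, just made explicit.
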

Let $U_{\mathcal A'}(\g_{\0})$(resp. $\mathcal N_{1,\mathcal A'}$; $\mathcal N_{-1,\mathcal A'}$) be the $\mathcal A'$-subalgebra of $U_{\mathcal A'}$ generated by elements $E_{\a_i}, F_{\a_i}, K_{\a_j}^{\pm 1}, i\in [1,m+n)\setminus m, j\in[1,m+n]$(resp. $E_{ij}, (i,j)\in\mathcal I_1$; $F_{ij}, (i,j)\in\mathcal I_1$). Then we have by Sec. 5 that $$U_{\mathcal A'}=\mathcal N_{-1,\mathcal A'}U_{\mathcal A'}(\g_{\0})\mathcal N_{1,\mathcal A'}.$$ Moreover, we have from the above corollary that there is an $\mathcal A'$-module isomorphism; $$U_{\mathcal A'}\cong \mathcal N_{-1,\mathcal A'}\otimes U_{\mathcal A'}(\g_{\0})\otimes\mathcal N_{1,\mathcal A'}.$$  Lemma 5.1 says that $\mathcal N_{-1,\mathcal A'}$(resp. $\mathcal N_{1,\mathcal A'}$) has an $\mathcal A'$-basis $F_I$(resp. $E_I$), $I\subseteq \mathcal I_1$.\par Let $\mathcal N_{1,\mathcal A'}^+$(resp. $\mathcal N_{-1,\mathcal A'}^+$) be the $\mathcal A'$-submodule of $\mathcal N_1$(resp. $\mathcal N_{-1}$) generated by elements $E_I$(resp. $F_I$), $I\neq \phi$. Then by \cite{z1} $\mathcal N_{1,\mathcal A'}^+$(resp. $\mathcal N_{-1,\mathcal A'}$) is an $\mathcal A'$-subalgebra of $U_{\mathcal A'}$. Moreover, using the formulas from Sec. 4 it is easy to see that $U_{\mathcal A'}(\g_{\0})\mathcal N_{1,\mathcal A'}$ and $\mathcal N_{-1,\mathcal A'}U_{\mathcal A'}(\g_{\0})$ are $\mathcal A'$-subalgebras of $U_{\mathcal A'}$ having $U_{\mathcal A'}(\g_{\0})\mathcal N_{1,\mathcal A'}^+$ and $\mathcal N_{-1,\mathcal A'}^+U_{\mathcal A'}(\g_{\0})$ as nilpotent ideals respectively.\par
 Let $B_{\mathcal A'}$(resp. $B^-_{\mathcal A'}$; $U^0_{\mathcal A'}$) be the $\mathcal A'$-subalgebra of $U_{\mathcal A'}(\g_{\0})$ generated by elements $E_{\a_i}$, $i\neq m$(resp. $F_{\a_i}$, $i\neq m$; $K^{\pm 1}_i, 1\leq i\leq m+n$).
 By \cite[Th. 4.21]{j1}, we have $$U_{\mathcal A'}(\g_{\0})\cong B_{\mathcal A'}\otimes U^0_{\mathcal A'}\otimes B^-_{\mathcal A'}.$$ Moreover, the $\mathcal A'$-algebra $B_{\mathcal A'}$(resp. $B^-_{\mathcal A'}$) is the algebra generated by the elements $E_{\a_i}$(resp. $F_{\a_i}$), $i\neq m$ with relations (R5), (R6)(resp. (R5), (R7)).
Set $$\begin{aligned}
U_{\eta}&=U_{\mathcal A'}\otimes_{\mathcal A'} \mathbb C, &U_{\eta}(\g_{\0})&=U_{\mathcal A'}(\g_{\0})\otimes _{\mathcal A'}\mathbb C\\ \mathcal N_{-1,\eta}&=\mathcal N_{-1,\mathcal A'}\otimes \mathbb C, &\mathcal N_{1,\eta}&=\mathcal N_{1,\mathcal A'}\otimes _{\mathcal A'} \mathbb C\\\mathcal N_{1,\eta}^+&=\mathcal N_{1,\mathcal A'}^+\otimes \mathbb C,&N_{-1,\eta}^+&=\mathcal N_{-1,\mathcal A'}^+\otimes \mathbb C\\B_{\eta}&=B_{\mathcal A'}\otimes _{\mathcal A'} \mathbb C, &B^-_{\eta}&=B^-_{\mathcal A'}\otimes _{\mathcal A'} \mathbb C,\\U^0_{\eta}&=U^0_{\mathcal A'}\otimes _{\mathcal A'} \mathbb C,\end{aligned}$$ where $\mathbb C$ is viewed as an $\mathcal A'$-algebra with $q$ acting as multiplication by $\eta$. Then $U_{\eta}(\g_{\0}), \mathcal N_{\pm 1,\eta},  \mathcal N_{1,\eta}^+$ can be viewed as $\mathbb C$-subalgebras of $U_{\eta}$. We also have  $\mathbb C$-algebra isomorphisms: $$\begin{aligned} &U_{\eta}&\cong &\mathcal N_{-1,\eta}\otimes U_{\eta}(\g_{\0})\otimes \mathcal N_{1,\eta}\\ &U_{\eta}(\g_{\0})&\cong &B^-_{\eta}\otimes U^0_{\eta}\otimes B_{\eta}.\end{aligned}$$ For $x\in U_{\mathcal A}$, we denote $x\otimes 1\in U_{\eta}$ also by $x$. Then $B_{\eta}$(resp. $B^-_{\eta}$) is the algebra generated by the elements $E_{\a_i}$(resp. $F_{\a_i}$), $i\neq m$ with relations (R5), (R6)(resp. (R5), (R7)) in which $q$ is replaced by $\eta$.

   \begin{corollary}(PBW theorem)
The $\mathbb C$-superalgebra $U_{\eta}$ has a basis $$F_IF_0^{\psi}K^{\mu}E_0^{\psi'}E_{I'}, I, I'\subseteq {\mathcal I_1}, \psi,\psi'\in\mathbb N^{\mathcal I_0},\mu\in\Lambda.$$\end{corollary}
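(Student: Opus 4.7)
The plan is to deduce this PBW basis for $U_\eta$ by base change from the integral form. The preceding Corollary asserts that the $\mathcal A'$-superalgebra $U_{\mathcal A'}$ is free as an $\mathcal A'$-module on the monomials $F_IF_0^{\psi}K^{\mu}E_0^{\psi'}E_{I'}$ indexed by $(I,\psi,\mu,\psi',I')$. Since $U_\eta$ is defined to be $U_{\mathcal A'}\otimes_{\mathcal A'}\mathbb C$ with $q$ acting on $\mathbb C$ by multiplication by $\eta$, the desired statement is literally an instance of the fact that base change of a free module is free with the corresponding basis.

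More concretely, I would argue as follows. Write $\mathcal B$ for the indexing set of the monomials above, and let $M = \bigoplus_{b\in\mathcal B}\mathcal A'\cdot b$, so that the preceding corollary gives an $\mathcal A'$-module isomorphism $M\xrightarrow{\sim}U_{\mathcal A'}$. Tensoring with $\mathbb C$ over $\mathcal A'$ is right exact and commutes with arbitrary direct sums, hence
\[
U_\eta \;=\; U_{\mathcal A'}\otimes_{\mathcal A'}\mathbb C \;\cong\; \Bigl(\bigoplus_{b\in\mathcal B}\mathcal A'\cdot b\Bigr)\otimes_{\mathcal A'}\mathbb C \;\cong\; \bigoplus_{b\in\mathcal B}\mathbb C\cdot(b\otimes 1).
\]
Thus the images of the monomials $F_IF_0^{\psi}K^{\mu}E_0^{\psi'}E_{I'}$ (which, under our convention $x\otimes 1\leftrightarrow x$, we continue to denote the same way) form a $\mathbb C$-basis of $U_\eta$.

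There is essentially no obstacle: the only thing one must ensure is that the previous corollary's statement is indeed a claim of $\mathcal A'$-freeness and not just of spanning. Spanning is immediate from the triangular decomposition $U_{\mathcal A'}\cong\mathcal N_{-1,\mathcal A'}\otimes U_{\mathcal A'}(\g_{\0})\otimes \mathcal N_{1,\mathcal A'}$ combined with Lemma 5.1 on the two factors $\mathcal N_{\pm 1,\mathcal A'}$ and with the Jantzen-style PBW decomposition $U_{\mathcal A'}(\g_{\0})\cong B^-_{\mathcal A'}\otimes U^0_{\mathcal A'}\otimes B_{\mathcal A'}$ on the even part; linear independence is inherited from the $\mathbb C(q)$-linear independence of these same monomials in $U_q$, which holds by the PBW theorem for $U_q$. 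With freeness over $\mathcal A'$ in hand, the specialisation at $q=\eta$ is formal.
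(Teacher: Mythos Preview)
Your argument is correct and is exactly the approach the paper intends: Corollary 7.2 is stated immediately after defining $U_{\eta}=U_{\mathcal A'}\otimes_{\mathcal A'}\mathbb C$ and after Corollary 7.1 (the $\mathcal A'$-basis of $U_{\mathcal A'}$), with no separate proof, so the intended justification is precisely the base-change of a free $\mathcal A'$-module that you spell out.
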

The center of the $\mathbb C$-superalgebra $U_{\eta}$ is defined by $$Z(U_{\eta})=\{x\in (U_{\eta})_{\0}|xu=ux \quad\text{for all}\quad u\in U_{\eta}\}.$$
Let $(i,j)\in\mathcal I_0$, $s\in [1,m+n]$. Then it is easy to see that $$x_{ij}=:E^l_{ij}, y_{ij}=:F_{ij}^l, z^{\pm 1}_s=:K_{s}^{\pm l}$$ are all contained in $Z(U_{\eta})$.
By the PBW theorem for $U_{\eta}$, the $\mathbb C$-subalgebra $Z_0$ generated by these elements is a polynomial algebra in variables $x_{ij}, y_{ij}, z_s^{\pm 1}$. Set $$\Lambda_l=:\{k_1\e_1+\cdots+k_{m+n}\e_{m+n}\in\Lambda |0\leq k_i<l, i=1,\cdots, m+n\}.$$Clearly we have  \begin{lemma} $U_{\eta}$ is a free $Z_0$-module having a basis $$F_IF_0^{\psi}K^{\mu}E_0^{\psi'}E_{I'}, I, I'\subseteq{\mathcal I_1}, \psi,\psi'\in\mathbb [0, l)^{\mathcal I_0},\mu\in\Lambda_l.$$\end{lemma}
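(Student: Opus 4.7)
The plan is to reduce the claim to the PBW theorem (Corollary 7.2) by a Euclidean-division argument, using the centrality of the generators of $Z_0$.

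First I would recall that, since $x_{ij} = E_{ij}^l$, $y_{ij} = F_{ij}^l$ (for $(i,j)\in\mathcal I_0$) and $z_s^{\pm 1}=K_s^{\pm l}$ (for $s\in [1,m+n]$) all lie in $Z(U_\eta)$, and since by Corollary 7.2 they are algebraically independent, the subalgebra $Z_0$ is the polynomial algebra they generate and has a $\mathbb C$-basis consisting of the monomials $y^a x^b z^\nu := \prod_{(i,j)\in\mathcal I_0} y_{ij}^{a_{ij}} \prod_{(i,j)\in\mathcal I_0} x_{ij}^{b_{ij}} \prod_{s=1}^{m+n} z_s^{\nu_s}$, with $a,b\in\mathbb N^{\mathcal I_0}$ and $\nu\in\Lambda$ (allowing negative exponents in the $z_s$).

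Next I would establish the following bijection. Given any PBW basis element $F_I F_0^{\psi} K^{\mu} E_0^{\psi'} E_{I'}$ of $U_\eta$ (with $\psi,\psi'\in\mathbb N^{\mathcal I_0}$, $\mu\in\Lambda$), perform Euclidean division componentwise: write $\psi_{ij} = l a_{ij} + \bar\psi_{ij}$ with $\bar\psi_{ij}\in [0,l)$, write $\psi'_{ij} = l b_{ij} + \bar\psi'_{ij}$ with $\bar\psi'_{ij}\in [0,l)$, and write $\mu = l\nu + \bar\mu$ with $\bar\mu\in\Lambda_l$ and $\nu\in\Lambda$. Since each of the powers $F_{ij}^l,E_{ij}^l,K_s^{\pm l}$ is central, the $l$-fold factors commute past every other generator and may be amalgamated inside the ordered products $F_0^\psi$, $K^\mu$, $E_0^{\psi'}$. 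Thus
\[
F_I F_0^{\psi} K^{\mu} E_0^{\psi'} E_{I'} \;=\; y^a x^b z^\nu \cdot \bigl(F_I F_0^{\bar\psi} K^{\bar\mu} E_0^{\bar\psi'} E_{I'}\bigr),
\]
which exhibits each PBW basis element as a $Z_0$-monomial times one of the proposed basis elements. Conversely the triple $(a,b,\nu)$ together with $(I,I',\bar\psi,\bar\psi',\bar\mu)$ is recovered uniquely from the PBW element, giving the desired bijection between $Z_0$-monomials $\times$ proposed basis elements and the PBW basis of $U_\eta$.

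Spanning now follows: any element of $U_\eta$ expanded in the PBW basis of Corollary 7.2 can be grouped according to the reduced residues $(\bar\psi,\bar\psi',\bar\mu)$ with coefficients that are $\mathbb C$-combinations of $y^a x^b z^\nu$, i.e.\ elements of $Z_0$. For linear independence, suppose $\sum c_{I,I',\bar\psi,\bar\psi',\bar\mu} F_I F_0^{\bar\psi} K^{\bar\mu} E_0^{\bar\psi'} E_{I'} = 0$ with $c_{\cdot}\in Z_0$; expanding each $c$ in the $\mathbb C$-basis $\{y^a x^b z^\nu\}$ of $Z_0$ and invoking the bijection above rewrites the sum as a $\mathbb C$-linear combination of pairwise distinct PBW basis elements of $U_\eta$, which must all vanish by Corollary 7.2. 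Hence every $c$ is zero and $U_\eta$ is free over $Z_0$ on the claimed basis.

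There is no real obstacle; the only point to handle with care is that the $l$-fold central factors can be absorbed into the appropriate slot of the ordered products $F_0^\psi$, $K^\mu$, $E_0^{\psi'}$. This is immediate from centrality in $U_\eta$, and once it is noted the argument reduces to the PBW theorem for $U_\eta$ together with the polynomial-algebra structure of $Z_0$.
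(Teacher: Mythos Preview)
Your proof is correct and is precisely the standard argument the paper has in mind: the paper states this lemma with the preface ``Clearly we have'' and gives no further justification, so your Euclidean-division reduction to the PBW basis of Corollary~7.2, using centrality of $x_{ij},y_{ij},z_s^{\pm1}$ to extract the $l$-th powers, is exactly the intended reasoning made explicit.
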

Let $M=M_{\0}\oplus M_{\1}$ be a simple $U_{\eta}$-module. For any $z\in Z_0$, we define a linear mapping $$\phi_z: M\longrightarrow M,  \phi_z(x)=zx, x\in M.$$  Clearly $\phi_z$ is an even $U_{\eta}$-module homomorphism. Since  $\text{ker}\phi_z$ is a $\mathbb Z_2$-graded submodule of $M$, either  $\text{ker}\phi_z=M$ or $\text{ker}\phi_z=0$. In the former case, we have $\phi_z=0$; in the latter case, the simplicity of $M$ says that $\phi_z (M)=M$, so that $\phi_z$ is an (even) isomorphism.
\begin{lemma}(\cite[Lemma 2.1, Ch.5]{sf}) Let $R$ be a commutative ring with unity and suppose that $I\subset R$ is an ideal of $R$. Let $V$ be a finitely generated unitary $R$-module with annihilator $ann_R(V)=\{r\in R|rv=0\quad\text{for all}\quad v\in V\}$. If $IV=V$, then $I+ann_R(V)=R$.
\end{lemma}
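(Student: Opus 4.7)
The plan is to use the classical determinant trick (the same device behind Nakayama's lemma) to produce a single element of $R$ lying in both $I+\mathrm{ann}_R(V)$ and equal to $1$ modulo $I$.

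First I would fix a finite generating set $v_1,\dots,v_n$ of $V$ as an $R$-module, which exists by hypothesis. Since $IV=V$, each $v_i$ can be written as a finite sum $v_i=\sum_{j=1}^n a_{ij}v_j$ with $a_{ij}\in I$. Assembling these relations into matrix form yields $(I_n-A)\mathbf{v}=0$, where $A=(a_{ij})$ and $\mathbf{v}=(v_1,\dots,v_n)^T$.

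Next I would multiply the identity $(I_n-A)\mathbf{v}=0$ on the left by the classical adjoint $\mathrm{adj}(I_n-A)$. Using the Cauchy--Binet style identity $\mathrm{adj}(M)\,M=\det(M)\,I_n$, which is valid over any commutative ring, I obtain $\det(I_n-A)\,v_i=0$ for every $i$. Since the $v_i$ generate $V$, this forces $\det(I_n-A)\in\mathrm{ann}_R(V)$.

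Finally I would expand the determinant: writing $\det(I_n-A)$ as a sum over permutations, the identity permutation contributes $\prod_i(1-a_{ii})$, and every other term has at least one off-diagonal factor $a_{ij}\in I$. Collecting terms, one sees $\det(I_n-A)=1+r$ for some $r\in I$. Therefore $1=\det(I_n-A)-r\in \mathrm{ann}_R(V)+I$, so $R=I+\mathrm{ann}_R(V)$, as required. The only genuinely delicate point is verifying that $\mathrm{adj}(M)\,M=\det(M)\,I_n$ holds over an arbitrary commutative ring (not just a field) and that the expansion of $\det(I_n-A)$ really produces an element of the form $1+r$ with $r\in I$; both are routine but are the one place where one must be careful not to import field-theoretic intuition.
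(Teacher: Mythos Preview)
Your argument is correct: this is the standard determinant-trick proof of the Cayley--Hamilton/Nakayama type, and every step is valid over an arbitrary commutative ring with unity. The paper itself does not give a proof of this lemma; it simply quotes it from \cite[Lemma 2.1, Ch.~5]{sf}, and the proof there is exactly the adjugate-matrix argument you have written. So your approach coincides with the intended one.
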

\begin{proposition} Let $M=M_{\0}\oplus M_{\1}$ be a simple $U_{\eta}$-module. Then $M$ is finite dimensional.
\end{proposition}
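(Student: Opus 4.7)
The plan is to exploit that $U_\eta$ is a finite free module over its central subalgebra $Z_0$ (Lemma 7.3), together with the dichotomy established in the paragraph before Lemma 7.4: for every $z\in Z_0$, the multiplication $\phi_z$ on $M$ is either identically zero or a bijection. From this dichotomy I will show that the annihilator $J:=\text{Ann}_{Z_0}(M)$ is a maximal ideal of $Z_0$; the Nullstellensatz then forces $Z_0/J\cong\mathbb C$, and the conclusion falls out from the finite $Z_0$-rank of $U_\eta$.

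First I would fix any nonzero $v\in M$. By simplicity, $M=U_\eta v$, and Lemma 7.3 exhibits a finite $Z_0$-basis $B=\{F_IF_0^{\psi}K^{\mu}E_0^{\psi'}E_{I'}\}$ of $U_\eta$, so $M=\sum_{b\in B}Z_0\cdot bv$ is finitely generated as a $Z_0$-module; in particular Lemma 7.4 is available. Now suppose, towards a contradiction, that some maximal ideal $I$ of $Z_0$ strictly contains $J$, and choose $z\in I\setminus J$. The dichotomy gives $\phi_z\in\text{Aut}(M)$, hence $zM=M$; since $z\in I$ this forces $IM=M$, and Lemma 7.4 yields $I+J=Z_0$. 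But $J\subseteq I$ then implies $I=Z_0$, contradicting the properness of $I$. Consequently no maximal ideal properly contains $J$, so $J$ itself is maximal.

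Because $Z_0=\mathbb C[x_{ij},y_{ij},z_s^{\pm 1}]$ is a finitely generated commutative $\mathbb C$-algebra and $\mathbb C$ is algebraically closed, the Hilbert Nullstellensatz gives $Z_0/J\cong\mathbb C$; so $Z_0$ acts on $M$ through a central character. The two-sided ideal $U_\eta J=JU_\eta$ (two-sided because $J$ is central in $U_\eta$) annihilates $M$, and Lemma 7.3 yields $U_\eta/U_\eta J\cong\bigoplus_{b\in B}(Z_0/J)\cdot b$, which has $\mathbb C$-dimension $|B|<\infty$. Hence $M$ is a cyclic module over the finite-dimensional $\mathbb C$-algebra $U_\eta/U_\eta J$, and therefore $\dim_{\mathbb C}M<\infty$. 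The one substantive step in this plan is the maximality of $J$: once the dichotomy on $Z_0$ is fed into Lemma 7.4, the Nullstellensatz plus the finite $Z_0$-rank of $U_\eta$ finish the argument in an essentially formal way.
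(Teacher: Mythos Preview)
Your proof is correct and follows essentially the same route as the paper: show $M$ is a finitely generated $Z_0$-module via Lemma~7.3, use the simplicity-driven dichotomy together with Lemma~7.4 to force $J=\mathrm{Ann}_{Z_0}(M)$ to be maximal, invoke the Nullstellensatz, and conclude. The only cosmetic differences are that the paper phrases the dichotomy at the level of ideals ($IV=0$ or $IV=V$ for any ideal $I$) rather than single elements, and that you should note explicitly that $J$ is proper (since $1\notin J$) before concluding it is maximal.
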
\begin{proof}Let $V=V_{\0}\oplus V_{\1}$ be a simple $U_{\eta}$-module. Since $U_{\eta}$ is a finitely generated $Z_0$-module by Lemma 7.3, $V$ is a finitely generated $Z_0$-module. Given any ideal $I\subseteq Z_0$, $IV$ is a $U_{\eta}$-submodule of $V$. Then either $IV=V$ or $IV=0$. Since $1\in Z_0$, $ann_{Z_0}(V)\neq Z_0$. Let $I\neq Z_0$ be any ideal containing $ann_{Z_0}(V)$. If $IV=V$, then by the above lemma we get $Z_0=ann_{Z_0}(V)+I=I$, a contradiction. Therefore, we have $IV=0$; that is $I=ann_{Z_0}(V)$, which implies that $ann_{Z_0}(V)$ is a maximal ideal of $Z_0$. By Hilbert's nullstellensatz, $Z_0/ann_{Z_0}(V)$ is finite dimensional over $\mathbb C$. Since $V$ is finite dimensional over $Z_0/ann_{Z_0}(V)$, V is finite dimensional over $\mathbb C$.
\end{proof}
\begin{lemma} For each  simple $U_{\eta}$-module $V=V_{\0}\oplus V_{\1}$, there is a $\mathbb C$-algebra homomorphism $\chi: Z_0\longrightarrow \mathbb C$ such that $(z-\chi(z))M=0$ for any $z\in Z_0$.
\end{lemma}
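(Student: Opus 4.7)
The plan is to exploit Proposition 7.5 together with a Schur-type argument for even $U_\eta$-module endomorphisms. Fix a simple $U_\eta$-module $V=V_{\bar 0}\oplus V_{\bar 1}$, which is finite dimensional over $\mathbb C$ by Proposition 7.5. For each $z\in Z_0$, the map $\phi_z\colon V\to V$, $v\mapsto zv$, is an even $U_\eta$-module endomorphism (since $z$ is central and of even parity), so as noted in the paragraph preceding Lemma 7.4, $\ker\phi_z$ is a $\mathbb Z_2$-graded submodule and hence either $\phi_z=0$ or $\phi_z$ is a bijection.

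Since $V$ is a nonzero finite-dimensional $\mathbb C$-vector space, the operator $\phi_z$ admits an eigenvalue $\chi(z)\in \mathbb C$. The element $z-\chi(z)\cdot 1$ still lies in $Z_0$, and $\phi_{z-\chi(z)\cdot 1}=\phi_z-\chi(z)\cdot\mathrm{id}_V$ has nonzero kernel by construction. The dichotomy above then forces $\phi_{z-\chi(z)\cdot 1}=0$, i.e.\ $(z-\chi(z))V=0$; in particular, the scalar $\chi(z)$ is uniquely determined by $z$, since $V\neq 0$.

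It remains to check that the assignment $z\mapsto \chi(z)$ is a $\mathbb C$-algebra homomorphism $Z_0\to\mathbb C$. Given $z,w\in Z_0$, $a,b\in\mathbb C$, and any $v\in V$, the relations $(z-\chi(z))V=(w-\chi(w))V=0$ give $(az+bw)v=(a\chi(z)+b\chi(w))v$ and $(zw)v=\chi(z)(wv)=\chi(z)\chi(w)v$, while $1\cdot v=v$ gives $\chi(1)=1$. Since $V\neq 0$, uniqueness of the defining scalar yields additivity, $\mathbb C$-linearity, and multiplicativity of $\chi$, completing the construction.

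The argument encounters no essential obstacle; it is a version of Dixmier's lemma applied to even central elements, leveraging only the finite-dimensionality established in Proposition 7.5 and the simple-module dichotomy recorded just before Lemma 7.4. The one point one must remember is the insistence on \emph{even} endomorphisms, so that $\ker\phi_z$ is a genuine $\mathbb Z_2$-graded submodule of $V$; this is automatic here since $Z_0\subset (U_\eta)_{\bar 0}$. (As a minor editorial remark, the $M$ appearing in the statement is the $V$ introduced at the start, and should be read as such.)
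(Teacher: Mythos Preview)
Your proof is correct and follows essentially the same approach as the paper: both use finite-dimensionality (Proposition~7.5) to produce an eigenvalue $\chi(z)$ for each $z\in Z_0$, and then the simplicity of $V$ together with the evenness of $z$ to force $z-\chi(z)$ to act as zero. The paper phrases the key step as ``the eigenspace $V_\chi$ is a $\mathbb Z_2$-graded $U_\eta$-submodule, hence all of $V$,'' while you phrase it via the kernel dichotomy for $\phi_{z-\chi(z)\cdot 1}$; these are the same observation, since that eigenspace is precisely $\ker\phi_{z-\chi(z)\cdot 1}$.
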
\begin{proof}
 Let $z\in Z_0$.  Since $\mathbb C$ is algebraically closed and $V$ is finite dimensional, there is $\chi (z)\in \mathbb C$ and nonzero $v\in V$ such that $zv=\chi(z)m$. Then $$V_{\chi}=:\{v\in V|zv=\chi(z)v\}\neq 0.$$ Since $z\in (U_{\eta})_{\0}$, $V_{\chi}$ is $\mathbb Z_2$-graded. Clearly $V_{\chi}$ is a $U_{\eta}$-submodule of $V$. Thus, we have $V=V_{\chi}$; that is, $z$ acts as multiplication by $\chi(z)$ on $V$. It is routine to verify that $\chi$ defines a $\mathbb C$-algebra homomorphism $Z_0\longrightarrow \mathbb C$.\end{proof}
 Let $\chi$ be as in the lemma. Define $I_{\chi}$(resp. $I^0_{\chi}$) to be the two-sided ideal of $U_{\eta}$(resp. $U_{\eta}(\g_{\0})$) generated by the central elements $$x_{ij}-\chi(x_{ij}), y_{ij}-\chi(y_{ij}), z_s^{\pm 1}-\chi(z_s^{\pm 1}),  (i,j)\in\mathcal I_0, s\in [1,m+n].$$ Define the superalgebras $$u_{\eta,\chi}=:U_{\eta}/I_{\chi}, u_{\eta,\chi}(\g_{\0})=U_{\eta}(\g_{\0})/I^0_{\chi}.$$
  \begin{lemma}$I_{\chi}=\mathcal N_{-1,\eta}I^0_{\chi}\mathcal N_{1,\eta}.$
 \end{lemma}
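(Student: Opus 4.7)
The plan is to prove the two inclusions separately, leaning entirely on (i) the triangular decomposition $U_{\eta}=\mathcal N_{-1,\eta}\otimes U_{\eta}(\g_{\0})\otimes \mathcal N_{1,\eta}$ and (ii) the centrality of each $x_{ij}-\chi(x_{ij})$, $y_{ij}-\chi(y_{ij})$, $z_s^{\pm 1}-\chi(z_s^{\pm 1})$ in $U_{\eta}$, both of which are already recorded in the excerpt.

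For the inclusion $\mathcal N_{-1,\eta}I^0_{\chi}\mathcal N_{1,\eta}\subseteq I_{\chi}$, I would observe that the generators of $I^0_{\chi}$ are exactly the generators of $I_{\chi}$ and all live in $U_{\eta}(\g_{\0})\subseteq U_{\eta}$, so $I^0_{\chi}\subseteq I_{\chi}$; applying the two-sided ideal property of $I_{\chi}$ then gives the inclusion at once.

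For the reverse inclusion $I_{\chi}\subseteq \mathcal N_{-1,\eta}I^0_{\chi}\mathcal N_{1,\eta}$, the key point is that since every generator $c$ of $I_{\chi}$ is central, $I_{\chi}$ coincides with the right ideal $\sum_c U_{\eta}\cdot c$. I would write a typical element as $\sum_a u_ac_a$ with $u_a\in U_{\eta}$ and $c_a$ one of the central generators. Using the triangular decomposition, expand $u_a=\sum_k n^-_{a,k}h_{a,k}n^+_{a,k}$ with $n^{\pm}_{a,k}\in \mathcal N_{\pm 1,\eta}$ and $h_{a,k}\in U_{\eta}(\g_{\0})$, then push $c_a$ past $n^+_{a,k}$ using its centrality to obtain
$$u_ac_a=\sum_k n^-_{a,k}\bigl(h_{a,k}c_a\bigr)n^+_{a,k}.$$
Since $h_{a,k}c_a$ lies in $U_{\eta}(\g_{\0})$ and $c_a$ is by construction a generator of $I^0_{\chi}$, we have $h_{a,k}c_a\in I^0_{\chi}$, and hence each summand belongs to $\mathcal N_{-1,\eta}I^0_{\chi}\mathcal N_{1,\eta}$.

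I do not anticipate any substantive obstacle: the argument is a short formal manipulation whose only conceptual ingredient is the centrality of the generators, which has already been noted. If there is anything worth double-checking, it is simply that the $l$-th power elements $E_{ij}^l$, $F_{ij}^l$ for $(i,j)\in\mathcal I_0$ and $K_s^{\pm l}$ really do sit in $Z(U_{\eta})$, but this is precisely the statement recorded just before the definition of $I_{\chi}$.
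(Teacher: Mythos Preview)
Your proposal is correct and follows essentially the same route as the paper: both use that the generators $x-\chi(x)$ are central in $U_{\eta}$ to rewrite $I_{\chi}$ as the one-sided ideal $\sum_x U_{\eta}(x-\chi(x))$, and then apply the triangular decomposition $U_{\eta}=\mathcal N_{-1,\eta}U_{\eta}(\g_{\0})\mathcal N_{1,\eta}$ together with centrality to slide the generator past the $\mathcal N_{1,\eta}$ factor. The paper's proof simply compresses your two inclusions into a single chain of equalities.
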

 \begin{proof} Since the  elements $x-\chi(x)$, $x=x_{ij}, y_{ij}, z_s^{\pm1}$ are central in $U_{\eta}$ and all contained in $U_{\eta}(\g_{\0})$, we have $$\begin{aligned}
 I_{\chi}&=\sum _xU_{\eta}(x-\chi(x))\\& =\mathcal N_{-1,\eta}\sum_x U_{\eta}(\g_{\0})(x-\chi (x))\mathcal N_{1,\eta}\\&=\mathcal N_{-1,\eta}I^0_{\chi}\mathcal N_{1,\eta}.\end{aligned}$$

 \end{proof}
 \begin{corollary} There is a $\mathbb C$-algebra isomorphism: $u_{\eta,\chi}\cong \mathcal N_{-1,\eta}\otimes u_{\eta,\chi}(\g_{\0})\otimes \mathcal N_{1,\eta}.$
 \end{corollary}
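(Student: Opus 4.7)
The plan is to derive the corollary directly from the triangular decomposition of $U_\eta$ combined with the description $I_\chi=\mathcal N_{-1,\eta}I^0_\chi\mathcal N_{1,\eta}$ proved in Lemma 7.6. The multiplication map
$$\mu\colon \mathcal N_{-1,\eta}\otimes U_\eta(\g_{\0})\otimes \mathcal N_{1,\eta}\longrightarrow U_\eta$$
is already known to be a $\mathbb C$-linear isomorphism, so the strategy reduces to identifying the preimage $\mu^{-1}(I_\chi)$ with $\mathcal N_{-1,\eta}\otimes I^0_\chi\otimes \mathcal N_{1,\eta}$ and then passing to the quotient.

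The first step is to establish the equality $\mu^{-1}(I_\chi)=\mathcal N_{-1,\eta}\otimes I^0_\chi\otimes \mathcal N_{1,\eta}$. The inclusion $\mu\bigl(\mathcal N_{-1,\eta}\otimes I^0_\chi\otimes \mathcal N_{1,\eta}\bigr)\subseteq I_\chi$ is immediate from Lemma 7.6. For the converse, that same lemma writes any $x\in I_\chi$ as a finite sum $\sum_i f_i g_i h_i$ with $f_i\in\mathcal N_{-1,\eta}$, $g_i\in I^0_\chi$, $h_i\in\mathcal N_{1,\eta}$, and bijectivity of $\mu$ then forces $\sum_i f_i\otimes g_i\otimes h_i$ to lie in the stated subspace.

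The second step is a routine quotient argument. By Lemma 5.1, both $\mathcal N_{-1,\eta}$ and $\mathcal N_{1,\eta}$ are free $\mathbb C$-modules with bases $F_I$ and $E_I$ ($I\subseteq \mathcal I_1$), hence flat, so tensoring the short exact sequence $0\to I^0_\chi\to U_\eta(\g_{\0})\to u_{\eta,\chi}(\g_{\0})\to 0$ on the left by $\mathcal N_{-1,\eta}$ and on the right by $\mathcal N_{1,\eta}$ yields
$$\frac{\mathcal N_{-1,\eta}\otimes U_\eta(\g_{\0})\otimes \mathcal N_{1,\eta}}{\mathcal N_{-1,\eta}\otimes I^0_\chi\otimes \mathcal N_{1,\eta}}\;\cong\; \mathcal N_{-1,\eta}\otimes u_{\eta,\chi}(\g_{\0})\otimes \mathcal N_{1,\eta}.$$
Combined with the first step, this gives the desired isomorphism $u_{\eta,\chi}\cong\mathcal N_{-1,\eta}\otimes u_{\eta,\chi}(\g_{\0})\otimes \mathcal N_{1,\eta}$, realized explicitly by $\bar f\otimes \bar g\otimes \bar h\mapsto \overline{fgh}$ on cosets.

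I do not anticipate any real obstacle: the triangular decomposition and Lemma 7.6 do nearly all the work, and the final manipulation is purely formal. The only subtlety worth flagging is that Lemma 7.6 must be used in the strong form $I_\chi=\mathcal N_{-1,\eta}I^0_\chi\mathcal N_{1,\eta}$ (as an equality, not merely an inclusion); this is exactly what its proof delivers, since each generator $x-\chi(x)$ of $I_\chi$ is central in $U_\eta$ and already lies in $U_\eta(\g_{\0})$.
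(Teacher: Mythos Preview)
Your proposal is correct and follows essentially the same route as the paper: the paper also uses the triangular decomposition $U_\eta\cong\mathcal N_{-1,\eta}\otimes U_\eta(\g_{\0})\otimes\mathcal N_{1,\eta}$, invokes Lemma~7.6 to identify $I_\chi$ with $\mathcal N_{-1,\eta}\otimes I^0_\chi\otimes\mathcal N_{1,\eta}$ under this isomorphism, and then passes to the quotient. Your version is slightly more explicit (spelling out the preimage computation and the flatness argument), but the underlying idea is identical.
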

 \begin{proof} By the lemma above, we have $$\begin{aligned}
 u_{\eta,\chi}&=U_{\eta}/I_{\chi}\\&\cong \mathcal N_{-1,\eta}\otimes U_{\eta}(\g_{\0})\otimes \mathcal N_{-1,\eta}/\mathcal N_{-1,\eta}\otimes I^0_{\chi}\otimes \mathcal N_{-1,\eta}\\&\cong \mathcal N_{-1,\eta}\otimes (U_{\eta}(\g_{\0})/I^0_{\chi})\otimes \mathcal N_{1,\eta}\\&= \mathcal N_{-1,\eta}\otimes u_{\eta,\chi}(\g_{\0})\otimes \mathcal N_{1,\eta}.\end{aligned}$$
 \end{proof}
 By Lemma 7.6, each simple $U_{\eta}$-module   is  a simple $u_{\eta,\chi}$-module for some $\chi$.  As in \cite{k2}, one can define derivations $e_{\a_i}, f_{\a_i}, k_{\pm\a_j}, i\in [1,m+n)\setminus m, j\in [1,m+n]$ of the superalgebra $U_q$ by $$e_{\a_i}=[E_{\a_i}^{(l)},-], f_{\a_i}=[F_{\a_i}^{(l)},-], k_{\pm\a_j}=[K_{\pm \a_j}^{(l)},-].$$  These derivations induces derivations on $U_{\eta}$.  By applying automorphisms of $U_{\eta}$ as that in \cite[3.5,3.6]{k1}, \cite[Th.6.1]{k2},  one can assume $\chi(x_{ij})=0$ for any $(i,j)\in\mathcal I_0$ in studying simple $U_{\eta}$-modules or simple $U_{\eta}(\g_{\0})$-modules.\par

 Assume $\chi(x_{ij})=0$ in the following. Denote by $B_{\chi}$(resp. $B^-_{\chi}; U^0_{\chi}$) the image of $B_{\eta}$(resp. $B^-_{\eta}; U^0_{\eta}$) in $u_{\eta,\chi}$.
 Since $$\begin{aligned}
 I^0_{\chi}&=\sum_x U_{\eta}(\g_{\0})(x-\chi(x))\\& =(\sum_{x=y_{ij}}B^-_{\eta}(x-\chi(x))U^0_{\eta}B_{\eta}\\&+B^-_{\eta}(\sum_{x=z_s^{\pm 1}}U^0_{\eta}(x-\chi(x))B_{\eta}\\&+B^-_{\eta}U^0_{\eta}(\sum_{x=x_{ij}}B_{\eta}(x-\chi(x)).\end{aligned}$$ By a proof similar to that in  Corollary 7.8, we get
 $$u_{\eta,\chi}\cong B_{\chi}^-\otimes U^0_{\chi}\otimes B_{\chi}. $$  In addition,  $B_{\chi}$ is the quotient of $B_{\eta}$ by the ideal generated by the central elements $E_{ij}^l, (i,j)\in \mathcal I_0$. It follows that $B_{\chi}$ is the algebra generated by the elements $E_{\a_i}, i\neq m$ and relations (R5), (R6) with $q$ replaced by $\eta$, together with $E_{ij}^l=0, (i,j)\in\mathcal I_0$.\par
 \begin{corollary}The $\mathbb C$-algebra $B_{\chi}$ is nilpotent.
 \end{corollary}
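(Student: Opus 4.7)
The plan is to show $B_\chi$ is finite dimensional, carries an $\mathbb N$-grading that is bounded above, and then deduce nilpotence by a height count.

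First I would use the PBW theorem for $U_\eta$: the subalgebra $B_\eta$ has a $\mathbb C$-basis of ordered monomials $\prod_{(i,j)\in\mathcal I_0} E_{ij}^{\psi_{ij}}$, $\psi_{ij}\in\mathbb N$, where the non-simple $E_{ij}$ are defined recursively as in Remark~(1) of Section~3. Passing to $B_\chi=B_\eta/(E_{ij}^l:(i,j)\in\mathcal I_0)$, any monomial with some $\psi_{ij}\ge l$ is killed by the relation $E_{ij}^l=0$, so the ordered monomials with $0\le\psi_{ij}<l$ span $B_\chi$; in particular $\dim_{\mathbb C} B_\chi<\infty$.

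Next I would exploit the weight grading coming from $K_s$-conjugation. Each $E_{ij}$ is homogeneous of weight $\epsilon_i-\epsilon_j=\alpha_i+\alpha_{i+1}+\cdots+\alpha_{j-1}$, which has height $j-i$ as a sum of simple roots. The defining relations (R5), (R6), and $E_{ij}^l=0$ are all weight-homogeneous, so $B_\chi$ inherits a grading by the positive cone $\bigoplus_k\mathbb N\alpha_k$. Since the spanning monomials satisfy $\psi_{ij}<l$, every weight appearing in $B_\chi$ has height at most $M:=(l-1)\sum_{(i,j)\in\mathcal I_0}(j-i)$.

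Finally, any word $E_{\alpha_{i_1}}E_{\alpha_{i_2}}\cdots E_{\alpha_{i_N}}$ of $N$ simple generators has weight of height exactly $N$, so it vanishes in $B_\chi$ whenever $N>M$. Hence the ideal $J\subseteq B_\chi$ generated by the $E_{\alpha_i}$ satisfies $J^{M+1}=0$, which is the desired nilpotence. The main thing to verify is that the PBW straightening still terminates after imposing the new relations $E_{ij}^l=0$; but these are themselves weight-homogeneous and simply truncate the exponent range of an already valid PBW basis, so no genuine obstacle arises — the argument is essentially a graded counting exercise.
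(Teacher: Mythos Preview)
Your proposal is correct and follows essentially the same approach as the paper: both arguments use the positive $\mathbb N$-grading on $B_\chi$ given by the height $j-i$ of $E_{ij}$, observe that $B_\chi$ is finite dimensional so the grading is bounded, and conclude that sufficiently long products of the generators vanish. The only cosmetic difference is that the paper packages the grading as a rational $G_m$-action $t\cdot E_{ij}=t^{j-i}E_{ij}$ and invokes the existence of a maximal weight abstractly, whereas you compute the explicit bound $M=(l-1)\sum_{(i,j)\in\mathcal I_0}(j-i)$ from the truncated PBW monomials.
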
\begin{proof}

 Let $G_m$ be the one dimensional multiplicative group(\cite{hu}). By the description of $B_{\chi}$ above, there is a well-defined $G_m$-action on  $B_{\chi}$ defined by $t\cdot E_{ij}=t^{j-i}E_{ij}$, $(i,j)\in\mathcal I_0$. Then $B_{\chi}$ becomes a rational $G_m$-module.  Since $B_{\chi}$ is finite dimensional,  there is a largest  $G_m$-weight $N\in\mathbb N$. It follows that any finite product $E_{i_1,j_i}\cdots E_{i_t,j_t}\in B_{\chi}$ is equal to zero, if $t>N$, since otherwise it has a $G_m$-weight $\sum_{s=1}^t(j_s-i_s)>N$. Thus, $B_{\chi}$ is nilpotent.
\end{proof}

\subsection{The simplicity of Kac modules for $u_{\eta,\chi}$}
In this section, we study $u_{\eta,\chi}$-modules. For the elements in $U_{\eta}$, we denote the images in $u_{\eta,\chi}$  by the same notation. $\chi$ is assumed to satisfy $\chi(x_{ij})=0$ for all $(i,j)\in\mathcal I_0$. Let $M=M_{\0}\oplus M_{\1}$ be a simple $u_{\eta,\chi}(\g_{\0})\mathcal N_{1,\eta}$-module. Then since $u_{\eta,\chi}(\g_{\0})\mathcal N_{1,\eta}^+$ is a nilpotent ideal of $u_{\eta,\chi}(\g_{\0})\mathcal N_{1,\eta}$, $M$ is annihilated by $u_{\eta,\chi}(\g_{\0})\mathcal N_{1,\eta}^+$. Since $$u_{\eta,\chi}(\g_{\0})\mathcal N_{1,\eta}/u_{\eta,\chi}(\g_{\0})\mathcal N_{1,\eta}^+\cong u_{\eta,\chi}(\g_{\0}),$$ $M$ is a simple $u_{\eta,\chi}(\g_{\0})$-module.
Conversely, each  $u_{\eta,\chi}(\g_{\0})$-module can be viewed as a $u_{\eta,\chi}(\g_{\0})\mathcal N_{1,\eta}$-module annihilated by $u_{\eta,\chi}(\g_{\0})\mathcal N_{1,\eta}^+$.\par
  Let $M$ be a simple $u_{\eta,\chi}(\g_{\0})$-module  annihilated by $u_{\eta,\chi}(\g_{\0})\mathcal N_{1,\eta}^+$.  Define the Kac module $$K(M)=u_{\eta,\chi}\otimes _{u_{\eta,\chi}(\g_{\0})\mathcal N_{1,\eta}} M.$$  Then we have $K(M)\cong \mathcal N_{-1,\eta}\otimes_{\mathbb C} M$ as $\mathcal N_{-1,\eta}$-modules.\par

Let $M'\subseteq M$ be a simple $U^0_{\chi}B_{\chi}$-submodule. Then $B_{\chi}M'$ is  a $U^0_{\chi}B_{\chi}$-submodule. Since $B_{\chi}$ is nilpotent, $B_{\chi}M'=0$, and hence $M'$ is a simple $U^0_{\chi}$-module. Since $U^0_{\chi}$ is commutative, we have that $M'$ is 1-dimensional. Assume $M'=\mathbb C v$. Then there is a $\mathbb C$-algebra homomorphism $\l$ from $U^0_{\chi}$ to $\mathbb C$ such that $hv=\l(h)v$ for all $h\in U^0_{\chi}$. Such an element $v\in M$ is referred to as a primitive vector of weight $\l$. We denote $X(U^0_{\chi})=\text{Hom}_{\mathbb C-alg}(U^0_{\chi},\mathbb C)$.\par
   Let $M$ be simple $u_{\eta,\chi}(\g_{\0})$-module containing a primitive vector $v_{\l}$ of weight $\l$.   Then  $M$ is spanned by elements in the form $F_IF_0^{\psi}v_{\l}$ with $\psi\in [0,l)^{\mathcal I_0}, I\subseteq \mathcal I_1$. It follows that $M=\sum_{\mu\in X(U^0_{\chi})} M_{\mu}$.  Each $x\in M_{\mu}$ is called a weight vector of weight $\mu$. \par   In the superalgebra $u_{\eta,\chi}$, from Sec. 5 we may assume $$E_{\mathcal I_1}F_{\mathcal I_1}=f(K)+\sum u^-_iu^0_iu_i^+,$$ where $u_i^{\pm}$ are in the images of $U_q^{\pm}$ in $u_{\eta,\chi}$, $f(K), u^0_i\in U^0_{\chi}$.   Then  $$E_{\mathcal I_1}F_{\mathcal I_1}v_{\l}=f(K)v_{\l}=f(K)(\l) v_{\l}.$$  Denote $f(K)(\l)$ by $f(\l)$. \par Note that all the lemmas in Sec. 5 hold in $u_{\eta,\chi}$(with $\eta$ in place of $q$) as well. By a similar argument as that in Prop. 5.9, we have\begin{proposition} $K(M)$ is a simple $u_{\eta,\chi}$-module if and only if $f(\l)\neq 0$.
\end{proposition}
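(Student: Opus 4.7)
The plan is to replay the argument of Proposition 5.9 in the root of unity setting, exploiting the fact (stated immediately before the proposition) that the analogues of Lemmas 5.1--5.8 remain valid in $u_{\eta,\chi}$ with $\eta$ in place of $q$. The single structural observation that drives everything is this: by the analogues of Lemmas 5.7 and 5.8 together with the weight relation $K_sF_{\mathcal I_1}=\eta^{a_s}F_{\mathcal I_1}K_s$, each generator $g$ of $u_{\eta,\chi}(\g_{\0})$ commutes with $F_{\mathcal I_1}$ up to a nonzero scalar $\eta^{z_g}$; moreover the analogue of Corollary 5.5 gives $\mathcal N_{-1,\eta}^{+}F_{\mathcal I_1}=0$. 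Consequently $F_{\mathcal I_1}\otimes M$ is a $\mathcal N_{-1,\eta}u_{\eta,\chi}(\g_{\0})$-submodule of $K(M)$ on which $\mathcal N_{-1,\eta}^{+}$ acts as zero. Under the vector space identification $F_{\mathcal I_1}\otimes w\leftrightarrow w$ the resulting $u_{\eta,\chi}(\g_{\0})$-action on $M$ differs from the original action only by multiplication of each generator by $\eta^{z_g}\in\mathbb C^{*}$, so the two actions have the same lattice of invariant subspaces, and simplicity of $M$ transfers to simplicity of $F_{\mathcal I_1}\otimes M$.

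For the ``if'' direction I would assume $f(\l)\neq 0$ and take a nonzero submodule $N\subseteq K(M)$. The analogue of Lemma 5.6 produces $F_{\mathcal I_1}\otimes v\in N$ for some $0\neq v\in M$, and the simplicity of $F_{\mathcal I_1}\otimes M$ established above then yields $F_{\mathcal I_1}\otimes v_{\l}\in u_{\eta,\chi}(\g_{\0})\cdot(F_{\mathcal I_1}\otimes v)\subseteq N$. Expanding $E_{\mathcal I_1}F_{\mathcal I_1}=f(K)+\sum u_i^{-}u_i^{0}u_i^{+}$ in the triangular decomposition of $u_{\eta,\chi}$ with each $u_i^{+}$ in the positive part, and noting that the positive part annihilates the primitive vector $v_{\l}$, I obtain $E_{\mathcal I_1}F_{\mathcal I_1}\otimes v_{\l}=f(\l)(1\otimes v_{\l})\in N$. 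Since $f(\l)\neq 0$ this places $1\otimes v_{\l}$ in $N$, and because $v_{\l}$ generates $M$ as a $u_{\eta,\chi}(\g_{\0})$-module, $N=u_{\eta,\chi}\cdot(1\otimes v_{\l})=K(M)$.

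For the ``only if'' direction I would assume $K(M)$ is simple. From the first paragraph $F_{\mathcal I_1}\otimes M$ is a simple $\mathcal N_{-1,\eta}u_{\eta,\chi}(\g_{\0})$-submodule, and simplicity of $K(M)$ combined with the decomposition $u_{\eta,\chi}\cong\mathcal N_{1,\eta}\otimes u_{\eta,\chi}(\g_{\0})\otimes\mathcal N_{-1,\eta}$ (obtained by applying $\Psi$ to the standard one) yields $K(M)=\mathcal N_{1,\eta}\cdot(F_{\mathcal I_1}\otimes M)$. A dimension count using $\dim\mathcal N_{1,\eta}=\dim\mathcal N_{-1,\eta}$ shows that the elements $E_IF_{\mathcal I_1}\otimes v_i$, with $I\subseteq\mathcal I_1$ and $v_1=v_{\l}$ among a basis of $M$, form a $\mathbb C$-basis of $K(M)$. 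In particular $E_{\mathcal I_1}F_{\mathcal I_1}\otimes v_{\l}$ is nonzero, while the same triangular expansion identifies it with $f(\l)(1\otimes v_{\l})$, forcing $f(\l)\neq 0$.

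The only step that requires care in the root of unity setting is the claim in the first paragraph that $F_{\mathcal I_1}\otimes M$ is simple as a $u_{\eta,\chi}(\g_{\0})$-module: the scalars $\eta^{z_g}$ generally do not assemble into an algebra automorphism of $u_{\eta,\chi}(\g_{\0})$, so I cannot pretend that $F_{\mathcal I_1}\otimes M$ is literally isomorphic to $M$, and at a root of unity simple modules need not be highest weight in the classical sense. The resolution is that the twisted action is nonetheless a genuine action (forced by the ambient $u_{\eta,\chi}$-action on $K(M)$) which rescales each generator by a nonzero scalar, so a subspace is invariant under one action if and only if it is invariant under the other and simplicity transfers formally.
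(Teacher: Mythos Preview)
Your proof is correct and follows essentially the same route as the paper, which simply says ``by a similar argument as that in Prop.~5.9.'' The one point where you deviate from the literal argument of Proposition~5.9 is in the ``if'' direction: instead of using uniqueness of the maximal vector to raise $v$ to $v_{\l}$ via a string of $E_{\a_i}$'s, you invoke simplicity of $F_{\mathcal I_1}\otimes M$ as a $u_{\eta,\chi}(\g_{\0})$-module, arguing that the twisted action (rescaling each generator by a nonzero $\eta^{z_g}$) has the same invariant subspaces as the original one. This is exactly the right adaptation at a root of unity, and it coincides with what the paper itself does later in the proof of Lemma~7.12, where simplicity of $M$ (not highest-weight combinatorics) is used to pass from $F_{\mathcal I_1}\otimes x$ to $F_{\mathcal I_1}\otimes v_{\l}$ by adjusting coefficients through Lemmas~5.7 and~5.8.
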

 A weight $\l\in X(U^0_{\chi})$ is called {\sl integral} if $\l(K_i^{\pm 1})=\eta_i^{\pm\l_i}$ with $\l_1,\cdots,\l_{m+n}\in \mathbb Z$.  In this case, we have $\l=\l_1\e_1+\cdots +\l_{m+n}\e_{m+n}\in \Lambda.$ For each  $\a=\e_i-\e_j\in \Phi^+$, set $K_{\a}=K_iK_j^{-1}$. It is then easy to check that $\l(K_{\a})=\eta^{(\l,\a)}$ for any $\a$. Moreover, for any $K^{\mu}, \mu\in\Lambda$, we have $\l(K^{\mu})=\eta^{(\l,\mu)}$.   Then by a similar argument as that for Prop. 6.2, we have $f(\l)=\Pi_{(i,j)\in\mathcal I_1}[(\l+\rho)(K_{ij})]$, where $$[(\l+\rho)(K_{ij})]=\frac{(\l+\rho)(K_{ij})-(\l+\rho)(K_{ij}^{-1})}{\eta-\eta^{-1}}.$$ \par
  Let $M$ be a $u_{\eta,\chi}(\g_{\0})$-module. Regard   $M$  as a $u_{\eta,\chi}(\g_{\0})\mathcal N_{1,\eta}$-module annihilated by $u_{\eta,\chi}(\g_{\0})\mathcal N_{1,\eta}^+$.  Define the induced functor from the categories of $u_{\eta,\chi}(\g_{\0})$-modules to the categories of $u_{\eta,\chi}$-modules by $$\text{Ind}(M)= u_{\eta,\chi}\otimes _{u_{\eta,\chi}(\g_{\0})\mathcal N_{1,\eta}} M.$$ Clearly Ind is an exact functor and $\text{Ind}(M)=K(M)$ in case $M$ is a simple $u_{\eta,\chi}(\g_{\0})$-module.  \par
 For any $\mathcal N_{1,\eta}$-module $N=N_{\0}\oplus N_{\1}$, denote $$N^{\mathcal N_{1,\eta}^+}=\{x\in N|gx=0 \quad\text{for any}\quad g\in \mathcal N_{1,\eta}^+\}.$$ If $N$ is a $u_{\eta,\chi}$-module, it is easy to check that $N^{\mathcal N_{1,\eta}^+}$ is a ($\mathbb Z_2$-graded) $u_{\eta,\chi}(\g_{\0})\mathcal N_{1,\eta}$-submodule.
 \begin{lemma}Let $\mathcal N_{1,\eta}$ be the left-regular $\mathcal N_{1,\eta}$-module. Then $\mathcal N_{1,\eta}^{\mathcal N_{1,\eta}^+}=\mathbb C E_{\mathcal I_1}.$
 \end{lemma}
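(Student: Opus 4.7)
My plan is to prove the two inclusions separately. For $\mathbb{C} E_{\mathcal{I}_1}\subseteq\mathcal{N}_{1,\eta}^{\mathcal{N}_{1,\eta}^+}$, I apply the anti-automorphism $\Omega$ (specialized at $q=\eta$) to Lemma~5.3 with $(i,k)=(m,m+1)$, the $\prec$-maximum element of $\mathcal{I}_1$. Since $\leq(m,m+1)=\mathcal{I}_1$, Lemma~5.3 gives $F_{\mathcal{I}_1} F_{st}=0$ for every $(s,t)\in\mathcal{I}_1$, and applying $\Omega$ (noting $\Omega(F_{\mathcal{I}_1})=E_{\mathcal{I}_1}$) yields $E_{st} E_{\mathcal{I}_1}=0$ for every $(s,t)\in\mathcal{I}_1$. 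Since $\mathcal{N}_{1,\eta}^+$ is generated as a left ideal by these $E_{st}$, this establishes $\mathcal{N}_{1,\eta}^+\cdot E_{\mathcal{I}_1}=0$.

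For the reverse inclusion, I would transport the problem via $\Omega$ to $\mathcal{N}_{-1,\eta}$: any $x\in\mathcal{N}_{1,\eta}^{\mathcal{N}_{1,\eta}^+}$ corresponds to $y:=\Omega(x)\in\mathcal{N}_{-1,\eta}$ satisfying $y F_{st}=0$ for all $(s,t)\in\mathcal{I}_1$, so it suffices to show that the right annihilator of $\mathcal{N}_{-1,\eta}^+$ in $\mathcal{N}_{-1,\eta}$ equals $\mathbb{C} F_{\mathcal{I}_1}$. Write $y=\sum_I c_I F_I$ in the basis of Lemma~5.1(a), and define a total order on $2^{\mathcal{I}_1}$ by $I\prec' I'$ iff the $\prec$-maximum element of $I\triangle I'$ lies in $I'$; under $\prec'$ the set $\mathcal{I}_1$ is the maximum. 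Assume $y\neq 0$ and let $\bar I$ be the $\prec'$-minimum of $\operatorname{supp}(y)$; if $\bar I=\mathcal{I}_1$, then $\operatorname{supp}(y)=\{\mathcal{I}_1\}$ and $y\in\mathbb{C} F_{\mathcal{I}_1}$, as desired. Otherwise let $(s,t)$ be the $\prec$-maximum element of $\mathcal{I}_1\setminus\bar I$; the $\prec'$-minimality of $\bar I$ forces every $I\in\operatorname{supp}(y)$ to contain each pair $\succ(s,t)$, since otherwise a missing $\gamma\succ(s,t)$ would give $\gamma\in\bar I\setminus I$ and place the $\prec$-maximum of $I\triangle\bar I$ in $\bar I$, giving $I\prec'\bar I$, a contradiction.

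Given that every $I\in\operatorname{supp}(y)$ contains each pair $\succ(s,t)$, one factorizes $F_I=F_{I\cap<(s,t)}\cdot F_{>(s,t)}$ when $(s,t)\notin I$ (resp.\ $F_I=F_{I\cap<(s,t)}\cdot F_{\geq(s,t)}$ when $(s,t)\in I$), and uses Proposition~5.4(2) together with the relation $F_{st}^2=0$ from formula~(1) of Section~4 to derive
\[
F_I F_{st} \;=\; \begin{cases} 0, & (s,t)\in I,\\ \pm\eta^{z_I} F_{I\cup\{(s,t)\}}, & (s,t)\notin I.\end{cases}
\]
Since distinct $I$'s with $(s,t)\notin I$ yield distinct $I\cup\{(s,t)\}$, the coefficient of $F_{\bar I\cup\{(s,t)\}}$ in $y F_{st}$ equals $\pm\eta^{z_{\bar I}} c_{\bar I}\neq 0$, contradicting $y F_{st}=0$; hence $\bar I=\mathcal{I}_1$. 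The main obstacle is verifying the displayed case-analysis for $F_I F_{st}$, which requires careful use of Proposition~5.4(2) to commute $F_{st}$ past $F_{>(s,t)}$ and to produce the $F_{st}^2=0$ cancellation in the first case.
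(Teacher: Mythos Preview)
Your proof is correct and follows essentially the same route as the paper: both transport the problem via $\Omega$ to the right-regular $\mathcal N_{-1,\eta}$-module, use Lemma~5.3 to see $F_{\mathcal I_1}$ is annihilated on the right, and for the reverse inclusion pick the $\prec$-largest pair $(s,t)$ missing from some $I$ in the support and invoke Lemma~5.3 and Proposition~5.4 to show $yF_{st}\neq 0$. Your introduction of the total order $\prec'$ on subsets and the explicit case analysis for $F_I F_{st}$ simply make precise what the paper compresses into the single sentence ``Then by Lemma~5.3 and~5.4 we have $xF_{ij}\neq 0$''; in particular your choice of $(s,t)$ as the $\prec$-maximum of $\mathcal I_1\setminus\bar I$ coincides with the paper's choice of ``the largest pair not contained in some $I$ with $c_I\neq 0$''.
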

 \begin{proof} Using the anti-automorphism $\Omega$,  we need only show that $$\mathcal N_{-1,\eta}^{\mathcal N_{-1,\eta}^+}=\mathbb CF_{\mathcal I_1},$$
 for the right-regular  $\mathcal N_{-1,\eta}$-module $\mathcal N_{-1,\eta}$. Recall that $\mathcal N_{-1,\eta}$ has a basis $F_I$, $I\subseteq \mathcal I_1$. By Lemma 5.3, $F_{\mathcal I_1}F_{ij}=0$ for all $(i,j)\in \mathcal I_1$, so that $F_{\mathcal I_1}\in \mathcal N_{-1,\eta}^{\mathcal N_{-1,\eta}^+}.$ Let $x=\sum_{I\subseteq \mathcal I_1}c_I F_I\in \mathcal N_{-1,\eta}$. Suppose there is $I\subsetneqq \mathcal I_1$ with $c_I\neq 0$. Let $(i,j)$ be the largest(w.r.t the order in $\mathcal I_1$) pair not contained in some $I$ with $c_I\neq 0$. Then by Lemma 5.3 and 5.4 we have  $xF_{ij}\neq 0$. Thus $\mathcal N_{-1,\eta}^{\mathcal N_{-1,\eta}^+}=\mathbb C F_{\mathcal I_1}$.
 \end{proof}
 \begin{lemma}If $\chi (z_iz_j^{-1})^2\neq 1$ for all $(i,j)\in\mathcal I_1$, then $K(M)$ is simple for any simple $u_{\eta,\chi}(\g_{\0})$-module.
\end{lemma}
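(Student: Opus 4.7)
The plan is to reduce the simplicity of $K(M)$ to the non-vanishing of the polynomial $f(\l)$, and then to derive the latter directly from the hypothesis on $\chi$. By Proposition 7.10, $K(M)$ is simple if and only if $f(\l)\neq 0$, where $v_\l$ denotes any primitive vector in $M$. Such a $v_\l$ exists by the discussion preceding Proposition 7.10: a simple $U^0_\chi B_\chi$-submodule of $M$ is one-dimensional and supplies a character $\l\in X(U^0_\chi)$ with $h v_\l=\l(h)v_\l$ for $h\in U^0_\chi$ and $E_{ij}v_\l=0$ for $(i,j)\in\mathcal I_0$. The paragraph following Proposition 7.10 records the root-of-unity analogue of Theorem 6.2, namely $f(\l)=\prod_{(i,j)\in\mathcal I_1}[(\l+\rho)(K_{ij})]$ with the quantum bracket $[(\l+\rho)(K_{ij})]=\bigl((\l+\rho)(K_{ij})-(\l+\rho)(K_{ij}^{-1})\bigr)/(\eta-\eta^{-1})$. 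It thus suffices to show that each factor on the right is nonzero.

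Fix $(i,j)\in\mathcal I_1$ and set $a=(\l+\rho)(K_{ij})$. The factor vanishes iff $a^2=1$, and I would rule this out by raising $a$ to the $2l$-th power and matching with the hypothesis. Multiplicativity of characters gives $a^{2l}=\l(K_{ij})^{2l}\cdot\rho(K_{ij})^{2l}$. A direct calculation for $gl(m,n)$ shows that $(2\rho,\e_i-\e_j)=2(2m-i-j+1)\in 2\mathbb Z$ for $(i,j)\in\mathcal I_1$, so $\rho(K_{ij})^{2l}=\eta^{l(2\rho,\e_i-\e_j)}=1$ since $\eta^l=1$. On the other hand, $K_{ij}^l=K_i^lK_j^{-l}=z_iz_j^{-1}$ is central in $U_\eta$ and becomes the scalar $\chi(z_iz_j^{-1})$ in $U^0_\chi$, so $\l(K_{ij})^{2l}=\l(K_{ij}^l)^2=\chi(z_iz_j^{-1})^2$. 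Combining, $a^{2l}=\chi(z_iz_j^{-1})^2$, which by hypothesis is not $1$; hence $a^2\neq 1$, and $[(\l+\rho)(K_{ij})]\neq 0$.

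Applying this over all $(i,j)\in\mathcal I_1$ yields $f(\l)\neq 0$, and Proposition 7.10 then delivers the simplicity of $K(M)$. I do not foresee a genuine obstacle: the product formula for $f(\l)$ is already essentially in place from Sec. 6, and the remaining step is a one-line comparison of $a^{2l}$ with the hypothesis. The only accounting that is easy to mishandle is the computation of $(2\rho,\e_i-\e_j)$ in the super setting, for which I would write out $\rho=\rho_0-\rho_1$ explicitly and use the signed bilinear form on $\Lambda$ to confirm that the value is always an even integer, so that the $\rho(K_{ij})^{2l}$ contribution truly disappears modulo $\eta^l=1$.
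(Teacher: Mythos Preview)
Your proposal is correct and follows essentially the same route as the paper. The only difference is organizational: you invoke Proposition 7.10 and the product formula for $f(\l)$ directly, whereas the paper's proof of this lemma re-runs the submodule argument of Proposition 5.9/7.10 (taking a nonzero submodule $N$, producing $F_{\mathcal I_1}\otimes v_\l\in N$ via Lemmas 5.6--5.8, then applying $E_{\mathcal I_1}$) before arriving at the identical nonvanishing computation $[(\l+\rho)(K_{ij})]=0\Rightarrow \chi(z_iz_j^{-1})^2=\l(K_{ij}^2)^l=\eta^{-2l(\rho,\e_i-\e_j)}=1$.
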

 \begin{proof} Let $v_{\l}\in M$ be a primitive vector of weight $\l$, and let $N=N_{\0}\oplus N_{\1}$ be a nonzero submodule of $K (M)$. By a similar proof as that in Lemma 5.6 we have $F_{\mathcal I_1}\otimes x\in N$ for some $0\neq x\in M$. We may assume $x$ is a weight vector of weight $\mu$. Since $M$ is a simple $u_{\eta,\chi}(\g_{\0})$-module, we have $u_{\eta,\chi}(\g_{\0})x=M$. Hence,  there is an element $$f=\sum c_iu_i^-u^0_iu_i^+\in u_{\eta,\chi}(\g_{\0})$$ such that $fx=v_{\l}$, where $u_i^-$(resp. $u^+_i$; $u^0_i$) is the product of $F_{ij}$(resp. $E_{ij}; K^{\pm 1}_s$), $(i,j)\in \mathcal I_0$, $1\leq s\leq m+n$, $c_i\in \mathbb C$.  \par Since $x$ is a weight vector, we may assume $f=\sum c_iu_i^-u_i^+$. Using Lemma 5.7 and 5.8, by a minor modification of the coefficients of $f$, we get $f'=\sum c'_iu_i^-u_i^+$, which applied to $F_{\mathcal I_1}\otimes x\in N$ to get $F_{\mathcal I_1}\otimes v_{\l}\in N$.  Applying $E_{\mathcal I_1}$ to
which we get $$\Pi_{(i,j)\in\mathcal I_1}[(\l+\rho)(K_{ij})] v_{\l}\in N.$$ Note that $K_{ij}^l=\chi(z_iz_j^{-1})$ in $u_{\eta,\chi}$, which implies that $[(\l+\rho)(K_{ij})]\neq 0$ for any $(i,j)\in\mathcal I_1$. Suppose otherwise $[(\l+\rho)(K_{ij})]=0$ for some $(i,j)\in\mathcal I_1$. Then we have $$\l(K_{ij}^2)=\rho(K_{ij}^{-2})=\eta^{-2(\rho,\e_i-\e_j)},$$ which gives $\chi(z_iz_j^{-1})^2=\l(K_{ij}^2)^l=1$, a contradiction. Then we have  $v_{\l}\in N$. Therefore $N=K(M)$, and hence $K(M)$ is simple.\end{proof}
 \begin{theorem}If $\chi (z_iz_j^{-1})^2\neq 1$ for all $(i,j)\in\mathcal I_1$, then $u_{\eta,\chi}(\g_{\0})$ and $u_{\eta,\chi}$ are Morita equivalent.
\end{theorem}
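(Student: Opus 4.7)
The plan is to prove the stronger statement that the Kac-induction functor
\[
K\colon u_{\eta,\chi}(\g_{\0})\text{-Mod}\longrightarrow u_{\eta,\chi}\text{-Mod},\qquad M\mapsto u_{\eta,\chi}\otimes_{u_{\eta,\chi}(\g_{\0})\mathcal N_{1,\eta}}M,
\]
is an equivalence of (super)categories, where $M$ is regarded as a $u_{\eta,\chi}(\g_{\0})\mathcal N_{1,\eta}$-module via the quotient by the nilpotent ideal $u_{\eta,\chi}(\g_{\0})\mathcal N_{1,\eta}^+$. An equivalence of module categories is Morita equivalence, so this will suffice. The candidate quasi-inverse is the coinvariant functor $G\colon V\mapsto V/\mathcal N_{-1,\eta}^+V$; the $u_{\eta,\chi}(\g_{\0})$-structure on $G(V)$ comes from the fact that $u_{\eta,\chi}(\g_{\0})$ normalizes $\mathcal N_{-1,\eta}^+$ under left multiplication, which in turn follows from the $\Omega$-images of the commutation formulas proved in Lemmas 5.7 and 5.8 (where $E_{\mathcal I_1}, E_{ij}$ are replaced by $F_{\mathcal I_1}, F_{ij}$).

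The steps I would carry out in order are: (i) \textbf{Exactness of $K$}: immediate from Corollary 7.8, since $u_{\eta,\chi}$ is free on the basis $\{F_I\}_{I\subseteq\mathcal I_1}$ as a right $u_{\eta,\chi}(\g_{\0})\mathcal N_{1,\eta}$-module. (ii) \textbf{$K$ preserves simples}: this is precisely Lemma 7.12 under the hypothesis $\chi(z_iz_j^{-1})^2\neq 1$. (iii) \textbf{Essential surjectivity on simples}: given a simple $u_{\eta,\chi}$-module $V$, which is finite-dimensional by Prop. 7.5, the coinvariant space $\bar V:=V/\mathcal N_{-1,\eta}^+V$ is nonzero because $\mathcal N_{-1,\eta}^+$ is nilpotent. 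Using the $u_{\eta,\chi}(\g_{\0})$-structure on $\bar V$ and the universal property of induction, one obtains a canonical $u_{\eta,\chi}$-map $K(\bar V)\to V$ which is surjective by construction. Taking any simple subquotient $M\subseteq\bar V$ and restricting yields a nonzero map $K(M)\to V$; since $K(M)$ is simple by (ii) and $V$ is simple, this map is an isomorphism. (iv) \textbf{Full faithfulness}: combine (i)--(iii) with the natural unit and counit of the adjunction $K\dashv G$; Step (iii) shows the counit $KG(V)\to V$ is iso on simples and hence on all finite-dimensional modules by exactness and induction on composition length, while the unit $M\to GK(M)$ is iso because coinvariants of $K(M)=\mathcal N_{-1,\eta}\otimes M$ under $\mathcal N_{-1,\eta}^+$ are manifestly $1\otimes M\cong M$.

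The main obstacle, in my view, lies entirely in Step (iii): verifying that the coinvariant space $\bar V$ is indeed a $u_{\eta,\chi}(\g_{\0})$-module (requiring that $u_{\eta,\chi}(\g_{\0})\cdot\mathcal N_{-1,\eta}^+\cdot V\subseteq\mathcal N_{-1,\eta}^+\cdot V$), and that the resulting canonical map $K(\bar V)\to V$ is nonzero. For the first point, one translates Lemmas 5.7--5.8 via the anti-automorphism $\Omega$ to obtain the needed commutation of $E_{\a_i}$ and $F_{\a_i}$ with the $F_{ij}$, $(i,j)\in\mathcal I_1$, modulo $\mathcal N_{-1,\eta}^+$. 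For the second, one uses that $V\twoheadrightarrow\bar V$ is a nonzero map of $\mathbb C$-vector spaces, so the induced map out of $K(\bar V)$ into $V$ is nonzero by construction. Once the map $K(\bar V)\to V$ is established as nonzero, simplicity of $K(\bar V)$ for every simple constituent of $\bar V$ (via Lemma 7.12) forces it to be an isomorphism on each such piece, forcing $\bar V$ itself to be simple and giving $V\cong K(\bar V)$, which closes the argument.
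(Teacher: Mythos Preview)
Your overall strategy—proving that $K$ is an equivalence by checking it on simples and then propagating via exactness—is sound and is essentially the route the paper takes. The gap is in the functor you pair with $K$.

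You set $G(V)=V/\mathcal N_{-1,\eta}^+V$ and assert an adjunction $K\dashv G$. This is not correct. By Frobenius reciprocity,
\[
\Hom_{u_{\eta,\chi}}\bigl(K(M),V\bigr)\;\cong\;\Hom_{u_{\eta,\chi}(\g_{\0})\mathcal N_{1,\eta}}\bigl(M,V\bigr)\;\cong\;\Hom_{u_{\eta,\chi}(\g_{\0})}\bigl(M,\,V^{\mathcal N_{1,\eta}^+}\bigr),
\]
so the right adjoint to $K$ is the $\mathcal N_{1,\eta}^+$-\emph{invariants} functor, not $\mathcal N_{-1,\eta}^+$-coinvariants; the two involve opposite nilpotent radicals and are not interchangeable. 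Because of this, step (iii) breaks: there is no ``canonical $u_{\eta,\chi}$-map $K(\bar V)\to V$''. Producing one via the universal property of $K$ would require a $u_{\eta,\chi}(\g_{\0})\mathcal N_{1,\eta}$-map $\bar V\to V$, but the only natural map in sight is the quotient $V\twoheadrightarrow\bar V$, which goes the wrong direction. Your computation $GK(M)\cong M$ in (iv) is correct as a bare isomorphism of $u_{\eta,\chi}(\g_{\0})$-modules, but it is not the unit of any adjunction you actually have.

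The paper works instead with the genuine right adjoint $V\mapsto V^{\mathcal N_{1,\eta}^+}$. The substantive point is then to show $K(M)^{\mathcal N_{1,\eta}^+}=M$, which is not obvious because $K(M)=\mathcal N_{-1,\eta}\otimes M$ is presented in terms of the \emph{other} nilpotent. The paper handles this by using the simplicity of $K(M)$ (Lemma 7.12) and a dimension comparison to identify $K(M)$ with the opposite Kac module $K^-(F_{\mathcal I_1}\otimes M)\cong\mathcal N_{1,\eta}\otimes_{\mathbb C}(F_{\mathcal I_1}\otimes M)$; then Lemma 7.11, namely $\mathcal N_{1,\eta}^{\mathcal N_{1,\eta}^+}=\mathbb C\,E_{\mathcal I_1}$, together with $E_{\mathcal I_1}F_{\mathcal I_1}v_{\l}\neq 0$ yields the claim. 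Your outline is easily repaired along these lines: replace $G$ by $V\mapsto V^{\mathcal N_{1,\eta}^+}$; in (iii) the Frobenius counit $K\bigl(V^{\mathcal N_{1,\eta}^+}\bigr)\to V$ now exists and is nonzero (since $V^{\mathcal N_{1,\eta}^+}\neq 0$ by nilpotence of $\mathcal N_{1,\eta}^+$), and the remainder of your argument goes through.
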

\begin{proof}
 We show that
 $K(M)^{\mathcal N_{1,\eta}^+}=M$. Note that the subspace $F_{\mathcal I_1}\otimes M\subseteq K(\l)$ is annihilated by $\mathcal N^+_{-1,\eta}$. Since $E_{ij}, F_{ij}, (i,j)\in\mathcal I_0$ commutes with $F_{\mathcal I_1}$ up to scalar multiple, the subspace is a simple $\mathcal N_{-1,\eta}u_{\eta,\chi}(\g_{\0})$-submodule of $K(M)$.
 Since $K(M)$ is simple, we have $$\begin{aligned} K(M)&=u_{\eta,\chi}F_{\mathcal I_1}\otimes M\\ & =\mathcal N_{1,\eta}F_{\mathcal I_1}\otimes M.\end{aligned}$$ Set $$K^-(F_{\mathcal I_1}\otimes M)=u_{\eta,\chi}\otimes _{\mathcal N_{-1,\eta}u_{\eta,\chi}(\g_{\0})}(F_{\mathcal I_1}\otimes M),$$ where $F_{\mathcal I_1}\otimes M$ is viewed as a $\mathcal N_{-1,\eta}u_{\eta,\chi}(\g_{\0})$-module annihilated by $\mathcal N^+_{-1,\eta}u_{\eta,\chi}(\g_{\0})$. By the comparison of dimensions we have that $K(M)$ is isomorphic to $K^-(F_{\mathcal I_1}\otimes M)$ as $u_{\eta,\chi}$-modules. Thus, as $\mathcal N_{1,\eta}$-modules, we have $$K(M)\cong \mathcal N_{1,\eta}\otimes_{\mathbb F} F_{\mathcal I_1}\otimes M,$$ from which it follows that $$\begin{aligned} K(M)^{\mathcal N_{1,\eta}^+}&\cong (\mathcal N_{1,\eta})^{\mathcal N_{1,\eta}^+}\otimes F_{\mathcal I_1}\otimes M\\&\cong E_{\mathcal I_1}F_{\mathcal I_1}\otimes M\\&=M,\end{aligned}$$ where the last equality is given by the fact that $E_{\mathcal I_1}E_{\mathcal I_1}v_{\l}\neq 0$.\par
 From above discussion, we have that the functor $(,)^{\mathcal N_{1,\eta}^+}$ is right adjoint to Ind. By a similar argument as that for \cite[Th. 3.2]{fp}, $u_{\eta,\chi}(\g_{\0})$ and $u_{\eta,\chi}$ are Morita equivalent.\end{proof}

\def\refname{\centerline{\bf REFERENCES}}

\end{document}